\journal{Journal of \LaTeX\ Templates}
\newcommand{\R}{{\Bbb R}}
\newcommand{\N}{{\Bbb N}}
\newtheorem{thm}{Theorem}
\newtheorem{lem}[thm]{Lemma}
\newtheorem{cor}[thm]{Corollary}
\newtheorem{proposition}[thm]{Proposition}
\newtheorem{definition}[thm]{Definition}
\newtheorem{remark}[thm]{Remark}
\newtheorem{example}[thm]{Example}
\newproof{proof}{Proof}
\begin{document}
\begin{frontmatter}

\title{Nonlinearly determined wavefronts of the Nicholson's diffusive  equation: when small delays are not harmless }

\author[a]{Zuzana Chladn\'a}
\author[b]{Karel Has\'ik}
\author[b]{Jana Kopfov\'a}
\author[b]{Petra N\'ab\v{e}lkov\'a}
\author[c]{\hspace{45mm} Sergei Trofimchuk\corref{mycorrespondingauthor}}
\cortext[mycorrespondingauthor]{\hspace{-7mm} {\it e-mails addresses}: chladna@fmph.uniba.sk (Zuzana Chladn\'a);  karel.hasik@math.slu.cz (Karel Has\'ik);   jana.kopfova@math.slu.cz (Jana Kopfov\'a); petra.nabelkova@math.slu.cz (Petra N\'ab\v{e}lkov\'a); trofimch@inst-mat.utalca.cl (Sergei Trofimchuk, corresponding author) \\}
\address[a]{Department of Applied Mathematics and Statistics, Faculty of Mathematics, Physics and Informatics, Comenius University, Mlynsk\'a dolina, 84248  Bratislava, Slovak Republic}
\address[b]{Mathematical Institute, Silesian University, 746 01 Opava, Czech Republic}
\address[c]{Instituto de Matem\'atica y F\'isica, Universidad de Talca, Casilla 747,
Talca, Chile }

\bigskip

\begin{abstract}
\noindent By proving the existence of non-monotone and non-oscillating wavefronts for the Nicholson's blowflies diffusive equation (the NDE),  we answer  an open question from \cite{GTLMS}.   Surprisingly, these wavefronts can be observed only for sufficiently small delays.  Similarly to the pushed  fronts, obtained waves  are not linearly determined. In contrast,  a broader family of eventually monotone wavefronts for the NDE is indeed determined by properties of the spectra of the linearized equations. Our proofs use essentially several specific characteristics  of the blowflies birth function (its unimodal form and the negativity of its Schwarz derivative, among others).  One of the key auxiliary results of the paper shows that  the Mallet-Paret--Cao--Arino theory of super-exponential solutions for scalar equations can be extended for some classes of second order delay differential  equations.  For the new  type  of non-monotone waves to the NDE, our numerical simulations also confirm their stability properties  established by Mei {\it et al}.  \end{abstract}
\begin{keyword} non-linear  determinacy,  delay, wavefront,  existence, super-exponential solution  \\
{\it 2010 Mathematics Subject Classification}: {\ 34K12, 35K57,
92D25 }
% keywords here, in the form: keyword \sep keyword
\end{keyword}

\end{frontmatter}

\newpage

\section{Introduction and  main results} \label{intro} 
Nicholson's blowflies delay differential equation 
\begin{equation}\label{NB}
u'(t)=-\delta u(t) + p u(t-\tau)e^{-au(t-\tau)}, \quad u \geq 0, 
\end{equation}
was introduced in 1980  by Gurney, Blythe and  Nisbet \cite{GBN} to 
provide a better description of  the evolution of the population $u(t)$ of  mature adults  of the Australian sheep-blowflies ({\it  Lucilia cuprina}) observed 
in a series of highly careful laboratory experiments realized by A. J. Nicholson \cite{Mur}. The positive parameters $\delta, p, a, \tau$ are the model's specific constants and  simple scaling  of variables 
allows us to assume  that $\delta= a = 1$ without any restriction of generality.  
Equation (\ref{NB}) was introduced as a more elaborate alternative to the delayed logistic equation: in difference with (\ref{NB}), the latter was unable to explain some irregular 
oscillations of $u(t)$ observed  in the collected experimental  data. In a short time, it became clear that Nicholson's blowflies equation represents  a fascinating and  non-trivial  object of investigation from the dynamical point of view. This fact attracted the interest of numerous researchers over the decades, cf.  \cite{BBI,AMC,LTT,Mur,Smith,SOY}. Moreover, following the same logic as in the case of  the delayed logistic equation (cf. \cite{BY}),  in 1996 Yang and So \cite{SOY} introduced the following diffusive version of   (\ref{NB}):
\begin{equation}\label{NBD}
\partial_tu(t,x)=\partial_{xx}u(t,x) - u(t,x) + pu(t-\tau,x)e^{-u(t-\tau,x)} , \quad x \in \mathbb R.
\end{equation}
The positive semi-wavefronts  $u(t,x) = \phi(x +ct)$, \ $\phi(-\infty)=0,$\ $\liminf_{t\to +\infty}\phi(t) >0$, are the fundamental transitory regimes in the dynamics generated by the diffusive Nicholson's equation.  The existence, uniqueness,  oscillation/monotonicity and stability properties of these waves were studied, among many other works,   in \cite{Chern,fhw,AMC,GTLMS,LM,LLLM,SZ,ST,TZ,SLW} and  the non-local version of (\ref{NBD})  was considered, among many other articles, in \cite{G,GR,GSW,GZ,KO,LW,GL,SWZ,YCW,ZY}.  

In this paper, we revisit the topic of  possible shapes for wavefronts $u(t,x) = \phi(x +ct)$,  $\phi(-\infty)=0,$  $\phi(+\infty)=\ln p,$  of equation (\ref{NBD}).  Our first main result, Theorem \ref{T1},  shows that contrarily to the tacitly accepted 
hypothesis, cf. \cite{Chern,LLLM},  of the determinacy of  the shape of  $\phi(t)$ by the spectra of linearized equations at the equilibria  (as it happens, for instance,  in the delayed or nonlocal KPP-Fisher equations \cite{ADN,FZ}), equation (\ref{NBD}) can have  wavefronts  which are neither monotone nor oscillating even if  the linearization of the profile equation at the positive equilibrium has negative eigenvalues.  This implies that  a non-monotone wave with unusually  high leading edge (see Fig. 1) can appear in (\ref{NBD}) even if  the associated linearized equations predict  the existence of exclusively monotone waves. Surprisingly enough, this strange type of wavefront's behavior can occur for  arbitrarily small delays $\tau$, to some extent contradicting the folklore principle {\sl "Small delays are harmless"} of the theory of delay differential equations \cite{Smith}.  The mechanism behind this loss of monotonicity of wavefronts is precisely the same one which causes the  {\it "linear determinacy principle"} \cite{LLW} to fail   for the  monostable  population models possessing the weak Allee effect (leading to the appearance of  {\it pushed or non-linearly determined} waves).
\begin{figure}[h] \label{FF2}
\centering \fbox{\includegraphics[height=7cm, width=7cm]{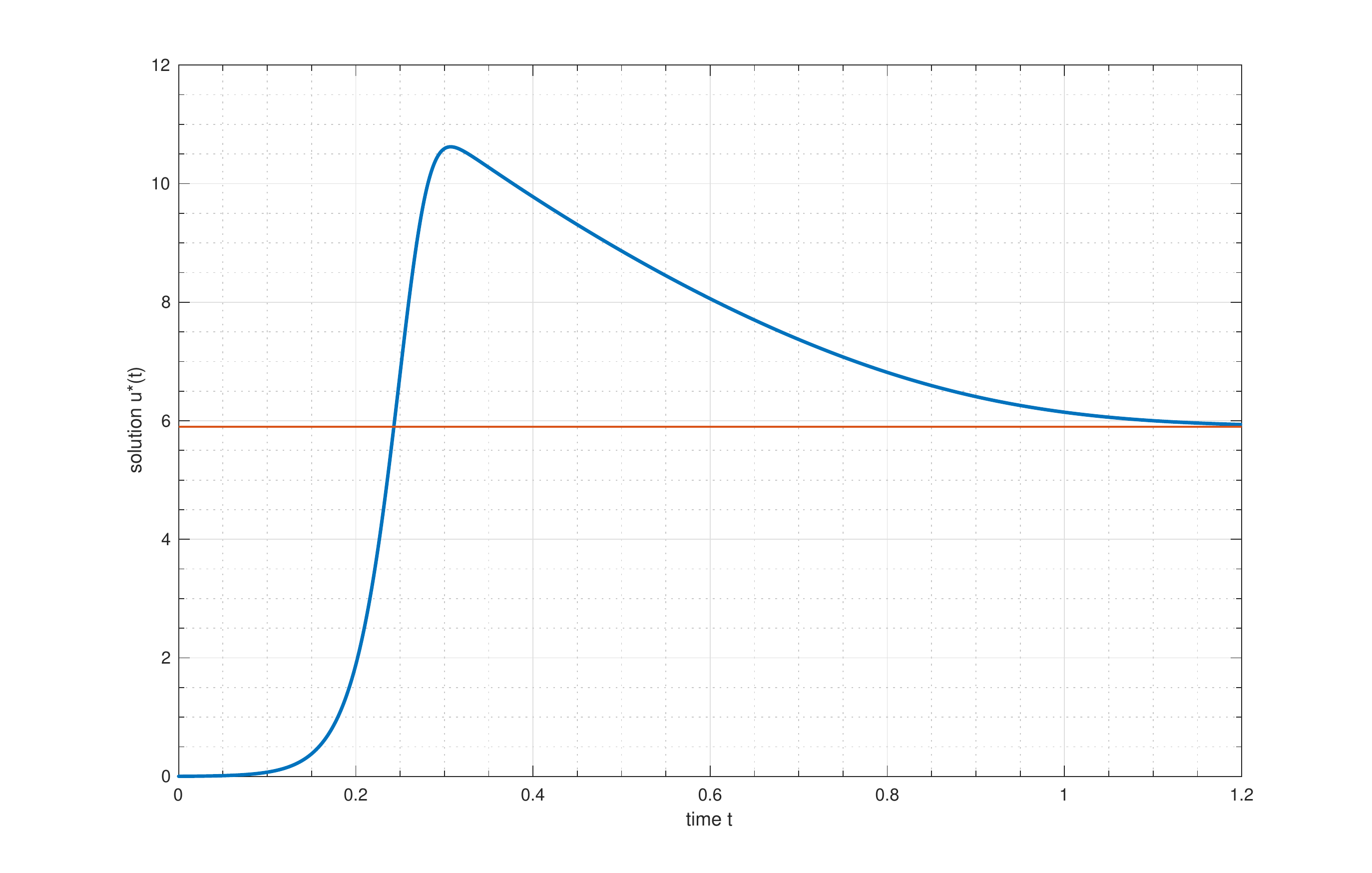}}
\caption{\hspace{0cm} Non-monotone non-oscillating wavefront for equation (\ref{NBD}) with $\tau=0.07, \ p = 365$ and $c= +\infty$.} 
\end{figure}
In order to state our first result, we introduce some notation. In the sequel, $\Gamma(z,s)$ will denote the lower incomplete gamma function, $\chi(z)$ will denote the characteristic function of equation (\ref{NB}) with $\delta =1$ linearized  at $u=0$:
\begin{equation}\label{zeta}
\Gamma(z,s)= \int_0^zt^{s-1}e^{-t}dt, \quad \chi(z) = z+1 -pe^{-z\tau}.
\end{equation}
It is easy to see that for  each $p >1$  equation $\chi(z)=0$ has exactly one positive root $\mu$. Set $\bar q_2 := -pe^{-2\mu\tau}/\chi(2\mu)<0$, $m:= \mu^{-1}$ and 
$$
\zeta := (1+\bar q_2)e^{-\tau} +pm\left(\Gamma(1,m+1) -\Gamma(e^{-\mu\tau},m+1)  + \bar q_2\Gamma(1,m+2) -\bar q_2\Gamma(e^{-\mu\tau},m+2)\right).
$$
\begin{thm} \label{T1}  Let $p, \tau$ be such that $p \in \frak{I}:= (e^2, \exp(1+\exp(-1-\tau)/\tau))$ and 
$\zeta>\ln p$. Then 
there exists  $\hat c(\tau,p)>0$ such that for each $c \geq \hat c(\tau,p)$ equation (\ref{NBD}) has 
a  positive wavefront propagating with the speed $c$ and whose profile $\phi_c(t)$ is eventually monotone at $\pm \infty$ and is non-monotone on $\R$. In fact, $|\phi_c(\cdot)|_\infty \geq \zeta >\ln p$. 
\end{thm}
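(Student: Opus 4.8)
The plan is to construct the wavefront $\phi_c$ as a fixed point of an integral operator associated with the profile equation, and then to extract the quantitative lower bound $|\phi_c|_\infty \ge \zeta$ from the structure of that construction near the leading edge. First I would write down the profile equation: substituting $u(t,x)=\phi(x+ct)$ into (\ref{NBD}) gives the second-order delay differential equation
\begin{equation*}
\phi''(t) - c\,\phi'(t) - \phi(t) + p\,\phi(t-c\tau)e^{-\phi(t-c\tau)} = 0,
\end{equation*}
with boundary conditions $\phi(-\infty)=0$, and the eventual convergence of $\phi$ at $+\infty$ to be dealt with separately (the statement only asks for eventual monotonicity there, not that $\phi(+\infty)=\ln p$ in this construction). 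For $c\ge\hat c(\tau,p)$ large, I would rescale by $s=t/c$ (or equivalently treat $1/c$ as a small singular-perturbation parameter) so that the limiting profile equation as $c\to+\infty$ becomes the first-order equation $\psi'(s) = -\psi(s) + g(\psi(s-\tau))$, where $g(u)=pue^{-u}$ is the blowflies birth function; this reduces the geometry of the leading edge to the already well-understood nonlocal/ODE limit and explains why the condition $p\in\frak{I}$ and the threshold $\zeta$ are computed via the characteristic function $\chi$ and the incomplete gamma integrals.

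Next I would build the wavefront on the half-line to the left, where the solution is exponentially small. Near $t=-\infty$ the profile behaves like $\phi(t)\sim e^{\mu t}$, $\mu$ being the unique positive root of $\chi$, so I would set up a standard iteration on the cone of functions that are asymptotically $e^{\mu t}$ at $-\infty$, using the Green's function of the linear operator $\phi''-c\phi'-\phi$ to rewrite the equation as $\phi = \mathcal{T}\phi$ for an integral operator $\mathcal{T}$. The negativity of the Schwarz derivative and the unimodality of $g$ enter here to control the sign and monotonicity structure of the nonlinearity $g(\phi(t-c\tau))$ along the iteration, ensuring the iterates stay ordered and converge. Having produced a nontrivial positive solution $\phi_c$, I would then establish the eventual-monotonicity claims at $\pm\infty$ by a linearization argument: at $-\infty$ the leading exponential $e^{\mu t}$ is positive and monotone; at $+\infty$ the super-exponential theory (the extension of the Mallet-Paret--Cao--Arino theory to the relevant second-order equation, invoked from earlier in the paper) rules out oscillatory decay toward the positive equilibrium, forcing eventual monotonicity there.

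The crux of the theorem is the quantitative overshoot $|\phi_c|_\infty \ge \zeta > \ln p$, and this is where I expect the main difficulty. The plan is to track the second-order Taylor/perturbation expansion of $\phi_c$ about the leading edge: writing $\phi_c(t)=\al e^{\mu t} + \bar q_2\,\al^2 e^{2\mu t}+\dots$, the coefficient $\bar q_2=-pe^{-2\mu\tau}/\chi(2\mu)$ is exactly the second-order Taylor coefficient of the wave expansion at the leading edge, obtained by matching powers of $e^{\mu t}$ in the profile equation and using $g(u)=pu-pu^2+\dots$; the sign $\bar q_2<0$ reflects the weak-Allee-type nonlinear correction responsible for the overshoot. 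I would then integrate the profile equation against a suitable weight (effectively computing the accumulated nonlinear contribution across the leading edge up to the first maximum), and the integrals $\Gamma(1,m+j)-\Gamma(e^{-\mu\tau},m+j)$ with $m=\mu^{-1}$ arise precisely from these integrations over the region where $\phi_c$ grows like $e^{\mu t}$ from $0$ up to order one. The delicate part is to show that the true nonlinear profile overshoots by at least the amount predicted by this two-term expansion, i.e. that the higher-order and remainder terms do not destroy the lower bound $\zeta$; I would control these by the monotonicity of the iteration and by exploiting that in the $c\to+\infty$ limit the leading-edge dynamics decouple cleanly, so that the bound holds uniformly for all $c\ge\hat c(\tau,p)$. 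The condition $\zeta>\ln p$ in the hypothesis is exactly the assertion that this guaranteed overshoot strictly exceeds the equilibrium level $\ln p$, which simultaneously forces non-monotonicity on $\R$ (the profile rises above its limit and must come back down) while the eventual-monotonicity statements survive at both infinities.
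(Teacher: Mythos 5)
Your overall strategy points in the same direction as the paper --- treat $c^{-2}$ as a singular perturbation parameter, read the overshoot off a two-term expansion at the leading edge with coefficient $\bar q_2$, and use super-exponential machinery at $+\infty$ --- but three of your steps have genuine gaps. First, the construction and the quantitative bound. A monotone iteration whose ``iterates stay ordered'' is not available: for $p>e^2$ the birth function $f(u)=pue^{-u}$ is not monotone on $[0,\ln p]$, and neither unimodality nor the negative Schwarzian rescues orderedness; this is precisely why the paper does not build $\phi_c$ by iteration but invokes known existence/uniqueness of semi-wavefronts together with the Faria--Trofimchuk singular perturbation theorems (Proposition \ref{P1}), reducing everything to the heteroclinic $u^*$ of the limiting delay equation (\ref{defn}). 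Second, your plan to extract $|\phi_c|_\infty\ge\zeta$ directly from the finite-$c$ expansion, ``uniformly for all $c\ge\hat c$,'' hides the two hardest quantitative points, which the paper settles at $\epsilon=0$ only: (i) the Dirichlet series for $u^*$ must be shown to converge on an interval reaching beyond the normalization point $t=0$ --- this is Theorem \ref{T3}, proved via a contour-integral (Egorychev) estimate of the coefficients; without it the two-term expansion is merely asymptotic at $t=-\infty$ and yields nothing at finite times; and (ii) the two-sided sandwich (\ref{ast}), $e^{\mu t}+\bar q_2e^{2\mu t}<u^*(t)<e^{\mu t}$ on all of $(-\infty,0]$, proved by a leftmost-contact comparison that works only because the normalization forces $u^*(t)<1$ for $t\le 0$, where $f$ \emph{is} increasing. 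With these in hand, $\zeta$ arises not from a weighted integration ``up to the first maximum'' but simply as a lower bound for $u^*(\tau)$ via the variation-of-constants formula on $[0,\tau]$ (Corollary \ref{30}); the bound then transfers to $\phi_c$ for all large $c$ by uniform-on-$\R$ convergence, not by a uniform finite-$c$ leading-edge analysis.

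The second major issue is that you explicitly set aside $\phi(+\infty)=\ln p$, claiming the statement does not require it; but the theorem asserts a \emph{wavefront}, which in this paper means connecting $0$ to $\ln p$, and your own non-monotonicity deduction (``rises above its limit and must come back down'') needs the profile to return below $\zeta$. Convergence is where the upper bound $p<\exp(1+e^{-1-\tau}/\tau)$, equivalently $P\tau e^{1+\tau}<1$, does real work: the paper verifies that it implies inequality (\ref{gsc}), whence $u^*(+\infty)=\ln p$ by the global stability criterion of \cite{LTT} (first step of Lemma \ref{Le9}), and it also produces the two real negative zeros $z_2<z_1<0$ of the characteristic function, i.e. hyperbolicity at $\ln p$; your sketch assigns this hypothesis no precise role. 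Finally, at $+\infty$ a single appeal to the super-exponential theory is not a proof: Lemma \ref{so} only excludes super-exponential decay of \emph{slowly oscillating} solutions, so one must first establish that any oscillation of $\phi(\cdot,\epsilon)$ around $\ln p$ is slow (via the Mallet-Paret--Sell theory, as in \cite{TTT}) and then run the dichotomy of Corollary \ref{28}: exponential decay plus Cao-type asymptotics force a dominant characteristic root, a complex dominant root has imaginary part at least $2\pi/\tau$ and contradicts slow oscillation, hence the root is real and the profile is eventually monotone. The same difficulty already occurs for $u^*$ itself: since $f$ violates the feedback condition (\ref{fco}) for $p>16.99\dots$, even the slow oscillation and eventual monotonicity of $u^*$ require the bespoke zero-spacing argument ($t_{j+1}-t_j>\tau$) of Lemma \ref{Le9}, not any off-the-shelf result.
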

\begin{example} \label{Ex1} Consider `small' delay  $\tau =0.07$ and take  $p =365 \in \frak{I}$. In such a case, formal  linear analysis   predicts the existence of a unique monotone wavefront connecting $0$ and $\ln p$.   
However,  it is easy to find  that   $\mu =  33.64 \dots$, $\bar q_2 =-0.05\dots$ and $\zeta=  6.46\dots $ $ > $ $\ln p = 5.89\dots$. Therefore the conclusions of Theorem \ref{T1} hold for equation (\ref{NBD}) with 
 $\tau =0.07$, $p =365$. Figure 1 shows a very accurate approximation of the scaled profiles $\phi_c(ct)$ of the corresponding non-monotone and non-oscillating wavefronts (nm-waves, for short) in the limit case $c= +\infty$.  This picture perfectly agrees with the considerations of Remark \ref{RONE} where we present a numerical solution of (\ref{NBD}) with $\tau =0.07$,  $p =365$ converging to  a wavefront propagating with the large  but finite  speed $c\approx 50$. \end{example}
Theorem \ref{T1} and Example \ref{Ex1} answer in positive an open question raised in \cite[p. 53]{GTLMS} about the existence of an eventually monotone and non-monotone front of 
(\ref{NBD}) for some $p > e^2$. In fact, there are many such wavefronts: on Fig. 2, we present the set of parameters $(\tau,\ln \ln p)$ satisfying all requirements  of Theorem \ref{T1}.
Formally, Theorem \ref{T1} works  for sufficiently fast wavefronts: since the minimal speed of propagation $c_*=7.89\dots$  in Example \ref{Ex1} is relatively large (in the sense that 
$c_*^{-2}=0.016\dots \ll \tau$), one might expect that even the minimal wavefront in Example \ref{Ex1} has non-monotone and non-oscillating profile. 
Our numerical simulations confirm this informal conclusion:  on Fig. 3,  we present three consecutive  positions (at the times $t=1, 3, 5$) of solution for equation (\ref{NBD}) with  $\tau =0.07$, $p =365$ and with the Heaviside step function as the initial datum. This numerical solution was obtained by applying the second order central difference schemes for the space derivative. The resulting transformed system of ordinary delayed equations has been solved by Matlab built-in  function dde23.  We take $x\in [-500,500]$ and $t \in [0,5]$. 
This numerical result  complements  the discussion in Section 7 of \cite{Chern}.

\begin{figure}[h] \label{F2}
\centering \fbox{\includegraphics[width=7.5cm]{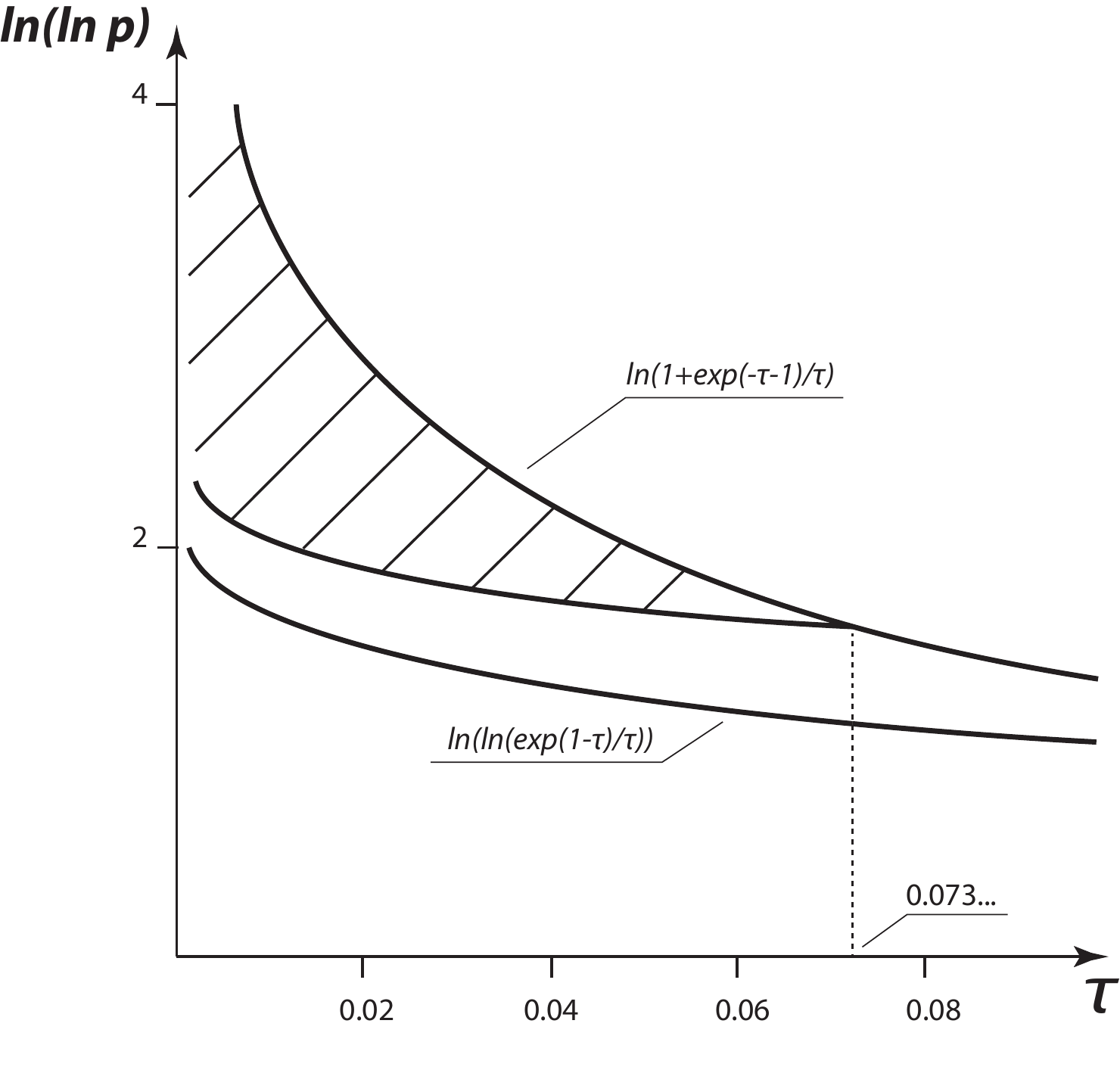}}
\caption{\hspace{0cm} Dashed domain corresponds to parameters $(\tau, p)$ satisfying the assumptions of Theorem \ref{T1}.}
\end{figure}

\begin{figure}[h] \label{F2a}
\centering \fbox{\includegraphics[width=13cm]{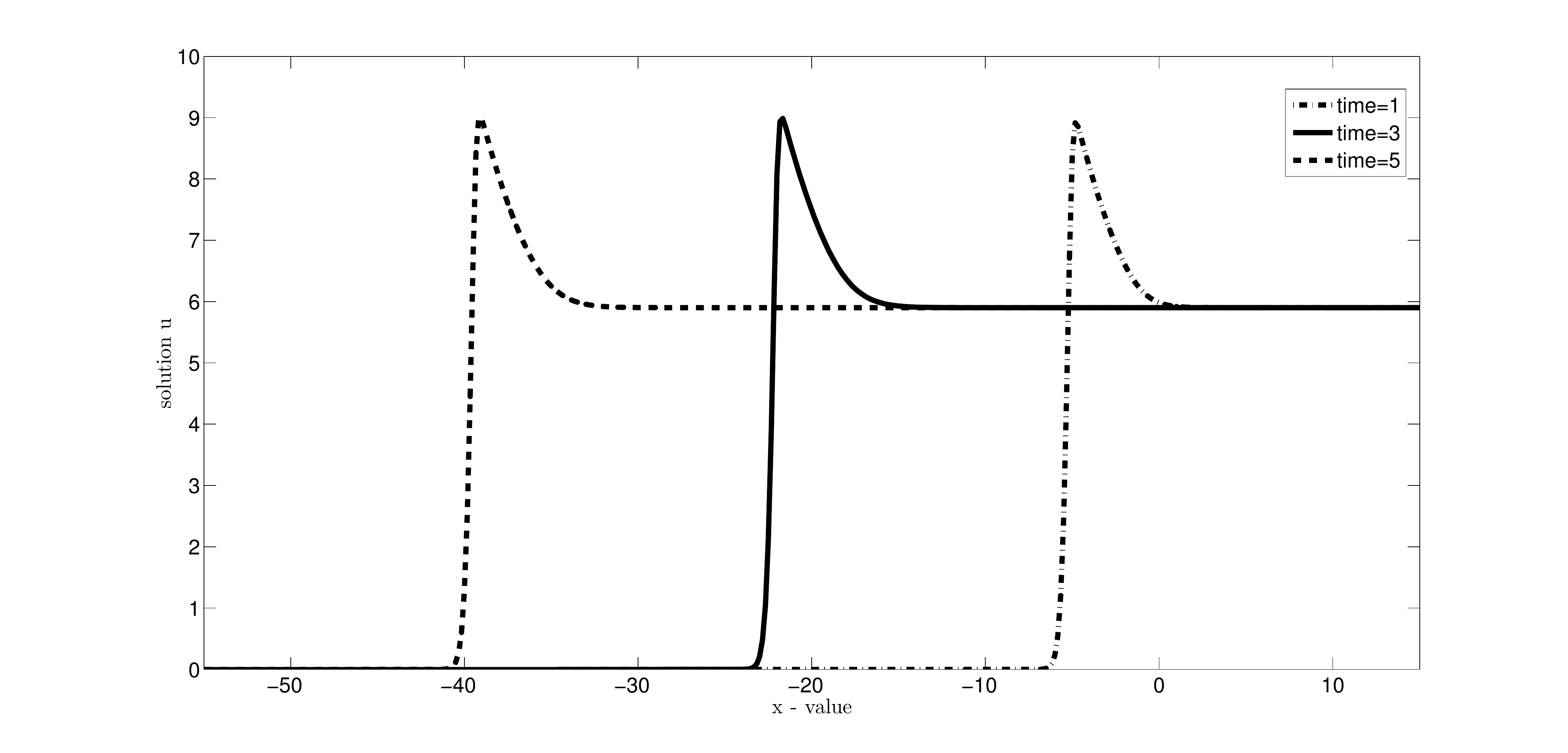}}
\caption{\hspace{0cm} Numerical approximations of the minimal wavefront  for equation  (\ref{NBD}) with  $\tau=0.07$, $p=365$.}
\end{figure}

The Nicholson's blowflies diffusive equation together with the food-limited diffusive equations \cite{HKarxiv,TPT} seem to be the first scalar models  coming from applications where untypical behavior in the form  of non-monotone non-oscillating wavefronts is established analytically and also observed in  numerical experiments.  Among previous studies, we would like to  mention   an illustrative example  in \cite{IGT}  of the Mackey-Glass type diffusive equation with a single delay and with a piece-wise linear birth function. 

Theorem \ref{T1} will be proved in the next section within the framework of the singular perturbation theory developed by Faria {\it et al} in \cite{fhw,FT,FTnl}.    
First, in Subsection \ref{S21}, we analyze the unique heteroclinic solution $u_*(t)$ of equation (\ref{NB}).  We show that the asymptotic Dirichlet series 
approximating $u_*(t)$ at $-\infty$ is uniformly convergent on a sufficiently long time interval. Then we use these approximations to detect parameters $(\tau, p)$ 
for which $u_*(t)$ is a non-monotone (but eventually monotone at $\pm \infty$) and non-oscillating solution of (\ref{NB}). Next, in Subsection \ref{S22}, we extend the aforementioned 
properties of $u_*(t)$ on the wavefront profiles $\phi_c(t)$  for all sufficiently large speeds $c$. The key technical result of this subsection is Lemma \ref{so} which excludes  the existence 
of profiles $\phi_c(t)$  slowly oscillating around the steady state $u=\ln p$ and super-exponentially converging to $u=\ln p$ at $+\infty$.  Lemma \ref{so} shows that  the Mallet-Paret--Cao--Arino theory of {\sl super-exponential solutions}  \cite{OA,Cao,mp} for scalar equations can be extended for some classes of second order delay differential  equations.  Lemma \ref{so}  also helps to show  that, in difference with the monotone wavefronts, eventually monotone wavefronts are linearly determined:  
\begin{thm} \label{T34}  Let $u=\phi(x+ct)$ be a wavefront for equation (\ref{NBD}).  Then the profile $\phi(t)$ is eventually monotone at $+\infty$ if and only if the 
characteristic function  $\chi_+(z,c)= z^2-cz-1 - Pe^{-zc\tau}, \ P:=\ln p -1$,  at the equilibrium  $u = \ln p$ has at least one negative zero. 
\end{thm}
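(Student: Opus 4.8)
The plan is to reduce the claim to the asymptotic behaviour of the profile near the equilibrium $u=\ln p$ and to read off that behaviour from the characteristic function $\chi_+$. First I would substitute $u(t,x)=\phi(x+ct)$ into (\ref{NBD}): writing $z=x+ct$ and $g(u)=pue^{-u}$, the profile solves $\phi''(z)-c\phi'(z)-\phi(z)+g(\phi(z-c\tau))=0$, and since $g(\ln p)=\ln p$ and $g'(\ln p)=1-\ln p=-P$, the substitution $\psi=\phi-\ln p$ yields $\psi''-c\psi'-\psi-P\psi(\cdot-c\tau)=O(\psi^2)$, whose characteristic equation is exactly $\chi_+(\lambda,c)=0$. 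A preliminary real-variable analysis of $\chi_+(\cdot,c)$ is then needed: since $\chi_+(0,c)=-\ln p<0$, $\chi_+(x,c)\to+\infty$ as $x\to+\infty$ and $\chi_+(x,c)\to-\infty$ as $x\to-\infty$, there is always exactly one positive real zero (a mode growing at $+\infty$, hence invisible in the decaying tail), while on $(-\infty,0)$ the graph of $\chi_+$ either stays below the axis (no negative zero) or bulges above it, producing a pair $\lambda_2<\lambda_1<0$ of negative zeros with $\chi_+>0$ on $(\lambda_2,\lambda_1)$.

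Next I would invoke the asymptotic theory underlying Lemma \ref{so}. In the Mallet-Paret--Cao--Arino framework a nontrivial super-exponential solution is necessarily slowly oscillating; since Lemma \ref{so} rules out slowly oscillating profiles converging super-exponentially to $\ln p$, the tail $\psi$ cannot be super-exponential, and therefore carries a finite Lyapunov exponent equal to the real part of a genuine root of $\chi_+(\cdot,c)$ in $\{\operatorname{Re}\lambda<0\}$, with the standard dichotomy: if the leading decaying root is real then $\psi$ is eventually of one sign and $\phi$ is eventually monotone, whereas if it is a complex conjugate pair $x_0\pm iy_0$ then $\psi(z)\sim e^{x_0z}(A\cos(y_0z+\theta)+o(1))$ changes sign infinitely often and $\phi$ oscillates about $\ln p$. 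This immediately yields the forward implication: if $\phi$ is eventually monotone the leading exponent cannot be complex, so it is a real root of $\chi_+$, necessarily negative since $\psi\to0$; thus $\chi_+(\cdot,c)$ has a negative zero.

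For the converse I would prove the spectral dominance statement that, once a negative real zero exists, the rightmost root of $\chi_+(\cdot,c)$ in $\{\operatorname{Re}\lambda<0\}$ is the real number $\lambda_1$. Splitting $\chi_+(x+iy,c)=0$ into real and imaginary parts gives $x^2-y^2-cx-1=Pe^{-xc\tau}\cos(yc\tau)$ and $y(2x-c)=-Pe^{-xc\tau}\sin(yc\tau)$, whence at the abscissa $x$ of any complex root one finds $\chi_+(x,c)=y^2-2Pe^{-xc\tau}\sin^2(yc\tau/2)$. I would use these identities, together with $\chi_+(x,c)<0$ on $(\lambda_1,0)$ and an application of the argument principle to the boundary of the strip $\{\lambda_1\le\operatorname{Re}\lambda<0\}$, to show that no root lies to the right of $\lambda_1$; hence every complex root has real part strictly below $\lambda_1$. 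Combined with the dichotomy of the previous paragraph, a negative real zero forces the leading decaying mode to be the real one, and $\phi$ to be eventually monotone.

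The main obstacle is this converse. Even with $\lambda_1$ the rightmost left half-plane root, one must still exclude the possibility that the particular profile fails to excite this real mode and instead decays, oscillating, along a deeper complex pair; ruling this mode-skipping out is where the positivity of the wavefront and the concavity/negative-Schwarzian structure of $g$ enter, via a comparison argument that traps $\phi$ between the explicit exponential barriers $\ln p\pm Ke^{\lambda_1z}$ supplied by the real root, forcing the slowest admissible decay rate. Correctly coupling this barrier construction with Lemma \ref{so} (which guarantees a finite decay rate in the first place) is the delicate point, and I expect the quantitative choice of $K$ and of the matching interval to be the most technical part of the argument.
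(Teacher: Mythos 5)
Your forward implication is essentially sound (the paper simply cites the proof of Lemma 25 in \cite{TTT} for it), but your converse contains two genuine gaps, and the first infects even your setup. You invoke Lemma \ref{so} to conclude that the tail cannot be super-exponential, asserting that in the Mallet-Paret--Cao--Arino framework a super-exponential solution is necessarily slowly oscillating. That is backwards: in that theory small solutions oscillate \emph{rapidly}, and Lemma \ref{so} takes slow oscillation as a \emph{hypothesis} (it shows a slowly oscillating solution cannot be small). So before Lemma \ref{so} or Cao's asymptotic representation can be applied, you must first prove that an oscillating profile oscillates \emph{slowly} in the sense of Definition \ref{d2s}. This is the actual heart of the paper's proof and is entirely absent from your plan: slow oscillation comes from the discrete Lyapunov functional theory via Proposition \ref{main}, whose applicability requires the feedback condition (\ref{fco}); since $f(x)=pxe^{-x}$ violates (\ref{fco}) for $p>16.99\dots$, the paper must additionally prove $\phi(t)>1$ for all $t\geq t_1$ using the variation-of-constants formula (\ref{L42}) and the bound $\xi(\tau)\ln p>(1-1/P)(P+1)>1$. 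That bound in turn needs $(\tau,c)\in\mathcal{D}_s$, and the bridge from your hypothesis (a negative zero of $\chi_+$, i.e.\ $(\tau,c)\in\mathcal{D}_m$) to $\mathcal{D}_s$ is exactly the technical core of the paper, the inequality (\ref{ck+}) of Lemma \ref{CK}. Nothing in your proposal supplies this bridge.

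The second gap is your resolution of the ``mode-skipping'' problem, which you correctly identify but do not overcome. Your proposed trap between the barriers $\ln p\pm Ke^{\lambda_1 z}$ has no comparison principle behind it: $f'(\ln p)=-P<0$ for the relevant $p$, so the profile equation is not quasi-monotone near the positive equilibrium, and the authors explicitly state they could not make upper/lower-solution or continuation methods work for this theorem. The paper sidesteps both the barrier construction and your argument-principle computation of a rightmost real root: once slow oscillation is established, Lemma \ref{so} excludes super-exponential decay, so Cao's representation holds along \emph{some} root of $\chi_+(\cdot,\epsilon)$; by Lemma 21 of \cite{TTT} every complex root satisfies $\beta_j(\epsilon)\geq 2\pi/\tau$, which would force adjacent zeros of $\phi-\ln p$ at distance at most $\tau/2$, contradicting slow oscillation --- so no oscillation occurs at all, and real-part dominance of $\lambda_1$ is never needed. (Minor point: your real-variable picture of $\chi_+$, with ``no negative zero or a pair $\lambda_2<\lambda_1<0$,'' is valid only for $P>0$; for $P\in(-1,0]$ one has $\chi_+(x,c)\to+\infty$ as $x\to-\infty$, a negative zero always exists, and the paper handles $P\in(-1,1]$ separately via the monotonicity-on-$\R$ results of So--Zou and \cite[Theorem 2.3]{GTLMS}.)
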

Note that, by \cite[Theorem 1]{TTT}, the leading edge of the profile $\phi(t)$ is  strictly increasing till its first intersection with the equilibrium  level $u=\ln p$. 
Eventual monotonicity criterion of Theorem \ref{T34} complements previous information concerning   the  shapes  of waves for the Nicholson's diffusive equation, cf. \cite{GTLMS,LLLM,TTT}.    It is worth mentioning that  our proofs use in an essential way  several specific characteristics  of the blowflies birth function (in particular, its unimodal form and the negativity of its Schwarz derivative).  There are various other monostable population models with unimodal birth functions (e.g., see \cite[Table 1]{BY},  the cases with overcompensating density dependence). We believe that the problem of the nm-waves in these models can be approached by using techniques from the present  work. For certain,  this does not mean 
that each population model with overcompensating density dependence \cite{BY} necessarily possesses a nm-wave.  

Eventual monotonicity criterion of Theorem \ref{T34} is proved in Section \ref{S223} as Corollary \ref{FC}.  We did not succeed to demonstrate this result by using well-known methods  of upper and lower solutions or  global continuation of waves (these methods were  quite efficient in establishing criteria of wave's monotonicity on $\R$  \cite{FZ,GTLMS,TPTy}). Instead,  we have combined  the above mentioned  Lemma \ref{so} with wavefront's existence and oscillation results from \cite{TTT,TT}  (stated
as Proposition \ref{main} in Appendix). With such an approach, the main technical difficulty was establishing a connection between two different series 
of conditions  (the first one given in Theorem \ref{T34} and the second one given in Proposition \ref{main}).  The required relation (in the form of somewhat cumbersome  inequality (\ref{ck+}))
is proved in Appendix as Lemma \ref{CK}. 
\section{Existence of non-monotone and non-oscillating wavefronts}
\subsection{On the approximation of the heteroclinic connections for the  blowflies equation}\label{S21}
In this subsection, we  establish several key properties of the positive heteroclinic connections to  the Nicholson's blowflies delayed equation 
\begin{equation}\label{defn}
u'(t) = -u(t)+ f(u(t-\tau)), \quad f(u) = pue^{-u}, \quad p >1. 
\end{equation}
The existence and uniqueness of these connections was previously demonstrated  in \cite{FT,AMC}, yet the mentioned works did not provide acceptable analytical tools to approximate the unique heteroclinic  solution $u^*(t)$ (normalized at $-\infty$)  on a given time interval. Sufficiently sharp  approximations are however necessary to prove the existence of the nm-waves, cf. \cite{HKarxiv}. To obtain such an approximation, we are  using  here the asymptotic Dirichlet series representing $u^*(t)$ at $-\infty$. It can be deduced from the  Murovtsev theory \cite{AMR} that this series is  uniformly converging on some infinite interval $(-\infty, s)$.  In the next theorem we are trying to find  $s$ as large as possible by  realizing a direct estimation of the Dirichlet series coefficients (in \cite{AMR},  the method of majorization of $u^*(t)$ by analytic functions was used). 
\begin{thm} \label{T3} Suppose that $p >1, \ \tau > 0,$ and   $\epsilon \in (0, e^{\mu\tau}-1)$.  
Then equation (\ref{defn}) has a  unique (up to translation) positive solution $u^*(t)$ defined for all $t \in \R$  satisfying  $u^*(-\infty)=0$. In addition, 
$u^*(t)$ is bounded, $\liminf_{t \to +\infty} u^*(t) >0$, and 
$u^*(t)$ is a real analytic function.  Moreover, the  solution $u^*(t)$  at $-\infty$ can be represented, modulo an appropriate shift in time, by the Dirichlet series
\begin{equation}\label{Zns}
u^*(t) =  e^{\mu t} +\bar q_2 e^{2\mu t} + \bar q_3e^{3\mu t} + \dots + \bar q_n e^{n\mu t} + \dots,   \quad t \to -\infty, 
\end{equation}
absolutely and uniformly converging on closed subsets of  the interval 
$$(-\infty, T):= \left(-\infty,\tau+\mu^{-1}\ln\left[\frac{\epsilon}{1+\epsilon}\ln\left(1+\frac{1}{|\bar q_2|(1+\epsilon)}\right)\right]\right).$$  
The coefficients $\bar q_j$, $j \geq 2$, alternate in sign and can be calculated recursively from equation (\ref{defn}). In particular, 
$
\bar q_2 = - pe^{-2\mu\tau}/\chi(2\mu)<0,\,
 \bar q_3= p(0.5-2\bar q_2)e^{-3\mu\tau}/\chi(3\mu)>0$, with $\chi$ defined in \eqref{zeta}.
\end{thm}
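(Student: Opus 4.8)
The plan is to handle the three groups of assertions separately: the qualitative properties (existence, uniqueness, boundedness, positivity, analyticity), the recursive construction of the Dirichlet coefficients together with their closed forms and their sign pattern, and finally the explicit convergence estimate on $(-\infty,T)$, which I expect to be the crux. For the qualitative part I would invoke \cite{FT,AMC} for the existence and uniqueness (up to translation) of the positive heteroclinic $u^*$ with $u^*(-\infty)=0$, as well as for its boundedness and $\liminf_{t\to+\infty}u^*(t)>0$; these reflect the monostable structure of (\ref{defn}) and the trapping properties of the unimodal map $f(u)=pue^{-u}$. Real analyticity I would obtain by the method of steps: since $f$ is entire, the integral form $u^*(t)=e^{-(t-a)}u^*(a)+\int_a^t e^{-(t-r)}f(u^*(r-\tau))\,dr$ propagates analyticity one delay-interval at a time, once $u^*$ is known to be analytic on a half-line near $-\infty$, which the convergent series supplies.

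Next I would substitute the formal series $u^*(t)=\sum_{n\ge1}\bar q_n e^{n\mu t}$ into (\ref{defn}), writing $Z=e^{\mu t}$ and expanding $f(u^*(t-\tau))=p\sum_{k\ge0}\frac{(-1)^k}{k!}\,u^*(t-\tau)^{k+1}$. Collecting the coefficient of $Z^n$ and moving the linear ($k=0$) contribution to the left produces the recursion
\[
\bar q_n\,\chi(n\mu)=pe^{-n\mu\tau}\sum_{k=1}^{n-1}\frac{(-1)^k}{k!}\sum_{j_1+\dots+j_{k+1}=n}\bar q_{j_1}\cdots\bar q_{j_{k+1}},\qquad n\ge2 .
\]
At order $n=1$ the identity $\chi(\mu)=0$ is automatic, which both fixes $\mu$ and allows the normalization $\bar q_1=1$ (this is the admissible time shift). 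Since $\chi'(z)=1+p\tau e^{-z\tau}>0$, the function $\chi$ is strictly increasing, hence $\chi(n\mu)>\chi(\mu)=0$ for every $n\ge2$ and the recursion is well defined. Reading off $n=2$ and $n=3$ reproduces the stated $\bar q_2=-pe^{-2\mu\tau}/\chi(2\mu)$ and $\bar q_3=p(0.5-2\bar q_2)e^{-3\mu\tau}/\chi(3\mu)$.

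For the sign pattern I would set $\tilde q_n:=(-1)^{n-1}\bar q_n$. Because $\bar q_{j_1}\cdots\bar q_{j_{k+1}}=(-1)^{n-(k+1)}\tilde q_{j_1}\cdots\tilde q_{j_{k+1}}$ and $(-1)^k(-1)^{n-k-1}=(-1)^{n-1}$, all sign factors cancel and the recursion turns into
\[
\tilde q_n\,\chi(n\mu)=pe^{-n\mu\tau}\sum_{k=1}^{n-1}\frac{1}{k!}\sum_{j_1+\dots+j_{k+1}=n}\tilde q_{j_1}\cdots\tilde q_{j_{k+1}},
\]
which is precisely the Dirichlet recursion attached to the auxiliary birth function $pue^{u}$ (all Taylor coefficients nonnegative). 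Since $\tilde q_1=1$ and $\chi(n\mu)>0$, an immediate induction gives $\tilde q_n>0$ for all $n$, i.e. the $\bar q_n$ alternate in sign.

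The main obstacle is the explicit radius of convergence. Using the positive comparison recursion together with the monotonicity of $\chi$, one has $pe^{-n\mu\tau}/\chi(n\mu)\le pe^{-2\mu\tau}/\chi(2\mu)=|\bar q_2|$ for $n\ge2$, so the $\tilde q_n$ are dominated by the coefficients $w_n$ of a majorant power series $W$ fixed by $w_1=1$ and the implicit functional equation $W\bigl(1-|\bar q_2|(e^{W}-1)\bigr)=\xi$. The series $W(\xi)$ is analytic up to the first critical point $W^*$ of $\Phi(W):=W\bigl(1-|\bar q_2|(e^{W}-1)\bigr)$, with radius $\Phi(W^*)$; evaluating $\Phi$ at the point $W_0:=\ln\bigl(1+\tfrac{1}{|\bar q_2|(1+\epsilon)}\bigr)$, chosen exactly so that $1-|\bar q_2|(e^{W_0}-1)=\tfrac{\epsilon}{1+\epsilon}$, yields the clean lower bound $\Phi(W_0)=\tfrac{\epsilon}{1+\epsilon}\ln\bigl(1+\tfrac{1}{|\bar q_2|(1+\epsilon)}\bigr)$, and the admissibility range $\epsilon\in(0,e^{\mu\tau}-1)$ encodes precisely that $W_0$ still lies where $\Phi$ is increasing, so that $\Phi(W_0)$ is a legitimate lower estimate of the radius. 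The delicate point — where I expect the real work to lie — is recovering the full additional delay $\tau$ in $T$: the crude bound $|\bar q_2|$ discards the genuine geometric decay $pe^{-n\mu\tau}=(\mu+1)e^{-(n-1)\mu\tau}$ of the prefactors, and tracking this decay through the majorant enlarges the admissible domain by the factor $e^{\mu\tau}$, which is exactly the $+\tau$ in the stated $T$. Once the estimate is established, the power-series character of $W$ gives absolute and uniform convergence on every closed subinterval of $(-\infty,T)$, and, the sum being an analytic solution of (\ref{defn}) with the prescribed behaviour at $-\infty$, the uniqueness invoked at the outset identifies it, after the normalizing shift, with $u^*$.
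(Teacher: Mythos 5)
Your qualitative part and your algebraic part are fine and essentially match the paper: the recursion obtained by comparing coefficients of $e^{n\mu t}$, the values of $\bar q_2,\bar q_3$, the sign alternation (the paper does the same bookkeeping directly on $\mbox{sign}\,A_k$ rather than via $\tilde q_n=(-1)^{n-1}\bar q_n$, which is cosmetic), and the appeal to \cite{FT,AMC}/\cite{AGT} for existence and uniqueness; for boundedness and $\liminf_{t\to+\infty}u^*(t)>0$ the paper uses uniform persistence \cite{BBI}, and for analyticity it invokes Nussbaum's theorem \cite{Nus}, though your method-of-steps propagation of analyticity forward from the series is a workable substitute. The genuine gap is exactly at the point you flag as ``the real work'': the convergence interval with the extra $+\tau$ is never proved. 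Your majorant with the crude bound $pe^{-n\mu\tau}/\chi(n\mu)\le|\bar q_2|$ provably yields only $T-\tau$, and the claimed repair --- that tracking the decay $pe^{-n\mu\tau}$ ``enlarges the admissible domain by the factor $e^{\mu\tau}$'' --- is an assertion, not an argument, and its naive implementation fails. Indeed, extracting a geometric factor by setting $\tilde q_n=e^{-(n-1)\mu\tau}v_n$ does not help: in the convolution $\sum_{j_1+\dots+j_{k+1}=n}\tilde q_{j_1}\cdots\tilde q_{j_{k+1}}$ the extracted factors multiply out to $e^{-(n-k-1)\mu\tau}$, which depends on the number of parts $k+1$ of the composition and not only on $n$; discarding it leaves $v_n$ obeying the same recursion with the constant worsened from $|\bar q_2|$ to $|\bar q_2|e^{\mu\tau}$, so the $+\tau$ gained in the time variable is paid back through the shrunken radius of your implicit function $W$. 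Moreover, your interpretation of the hypothesis $\epsilon\in(0,e^{\mu\tau}-1)$ is incorrect: the bound $\Phi(W_0)\le\max\Phi=\mbox{radius}$ holds for every $\epsilon>0$ irrespective of where $W_0$ sits (as $\epsilon\to0^+$, $W_0$ tends to the zero of $\Phi$, \emph{past} its critical point), so this range cannot come from monotonicity of $\Phi$.

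What the paper actually does is a single induction with the uniform hypothesis $|q_j|\le\sigma:=q_1$, in which the combinatorial sum $\sum_{k}\frac{\sigma^{k+1}}{k!}\binom{n}{k}$ is evaluated by the Egorychev contour-integral representation over $|z|=\epsilon$: here $\epsilon$ is the \emph{contour radius}, and the trade-off is that the combinatorial growth is tamed to $(1+\epsilon)^n$ at the price of the constant $e^{\sigma/\epsilon}-1$. Combining this with the exact prefactor estimate $pe^{-(n+1)\mu\tau}/\chi((n+1)\mu)\le |\bar q_2|\,e^{\mu\tau}e^{-n\mu\tau}$ gives $|q_{n+1}|\le \sigma|\bar q_2|(e^{\sigma/\epsilon}-1)e^{\mu\tau}\bigl(e^{-\mu\tau}(1+\epsilon)\bigr)^n$; the condition $\epsilon<e^{\mu\tau}-1$ is precisely $e^{-\mu\tau}(1+\epsilon)<1$ (the geometric ratio, i.e.\ it is tied to the $\tau$-decay, not to the majorant), the induction closes iff $|\bar q_2|(e^{\sigma/\epsilon}-1)(1+\epsilon)\le 1$, which dictates the gauge choice $q_1=\sigma=\epsilon\ln\bigl(1+[|\bar q_2|(1+\epsilon)]^{-1}\bigr)$, and the ratio test plus the normalizing shift $t\mapsto t-\mu^{-1}\ln q_1$ produce exactly the stated $T=\tau+\mu^{-1}\ln\bigl[\frac{\epsilon}{1+\epsilon}\ln\bigl(1+\frac{1}{|\bar q_2|(1+\epsilon)}\bigr)\bigr]$. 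The essential mechanism is thus the simultaneous optimization over the gauge $\sigma$ and the contour radius $\epsilon$ against the prefactor decay $e^{-n\mu\tau}$ --- an interplay that your fixed implicit-function majorant cannot reproduce. Without a proof of a coefficient bound with geometric ratio $e^{-\mu\tau}(1+\epsilon)$, the interval $(-\infty,T)$, and hence the theorem as stated, is not established by your argument.
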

\begin{proof}  We look for an analytic solution $u=u(t)$ of equation (\ref{defn}) in the form 
\begin{equation}\label{zns}
u(t) =  q_1e^{\mu t} +q_2 e^{2\mu t} + q_3e^{3\mu t} + \dots + q_n e^{n\mu t} + \dots,   
\end{equation}
with some $q_1>0$.  After comparing the coefficients of $e^{(n+1)\mu t}$ in  both sides of the next equation (obtained from \eqref{defn}  by using  a series representation for $f(u)$)
%we obtain the equality 
$$
u'(t)+u(t) -pu(t-\tau) =- p\left(u^2(t-\tau) - \frac{u^3(t-\tau)}{2!} + \dots +   (-1)^{n+1}\frac{u^{n+1}(t-\tau)}{n!}+ \dots\right)
$$
we get the recurrent formula   determining $q_{n+1}$, $n \geq 1$:
$$
q_{n+1}= -\frac{pe^{-(n+1)\mu\tau}}{\chi((n+1)\mu)}\left[\sum_{i_1+i_2=n+1} q_{i_1}q_{i_2} -\frac{1}{2!} \sum_{i_1+i_2+i_3=n+1} q_{i_1}q_{i_2}q_{i_3}+\dots+\frac{(-1)^{n+1}}{n!}q_1^{n+1} \right]. %\sum_{i_1+i\dots+i_n=n} q_{i_1}q_{i_2}\dots  q_{i_n}  
$$
The general term  of the sum in brackets  is 
$$
A_k:= \frac{(-1)^{k}}{(k-1)!}\sum_{i_1+i_2\dots+i_k=n+1} q_{i_1}q_{i_2}\dots  q_{i_k}, \quad k \geq 2. 
$$
Assuming that $(-1)^{j+1}q_j >0$ for all $j \leq n$, we find that $\mbox{sign}\,A_k$ is
$$
 \mbox{sign}\,\frac{(-1)^{n+1+k+k}}{(k-1)!}\sum_{i_1+i_2\dots+i_k=n+1} (-1)^{i_1+1}q_{i_1}(-1)^{i_2+1}q_{i_2}\dots (-1)^{i_k+1} q_{i_k}= \mbox{sign}\,(-1)^{n+1}. 
$$
Therefore $\mbox{sign}\,q_{n+1} = - \mbox{sign}\,A_k= \mbox{sign}\,(-1)^{n}$.  Notice here that $\chi((n+1)\mu) \geq \chi(2\mu) >0$ for all $n \in \N$. 

Suppose now that $|q_j| \leq q_1:=\sigma$ for each $j =1, \dots,n$.  Then, invoking elementary combinatorics together with the Egorychev method \cite{Ego},  we find that,  for each $\epsilon >0$,
$$
|q_{n+1}|\leq  \frac{pe^{-(n+1)\mu\tau}}{\chi((n+1)\mu)}\left[\sum_{i_1+i_2=n+1} \sigma^2+ \frac{1}{2!} \sum_{i_1+i_2+i_3=n+1} \sigma^3+\dots+\frac{\sigma^{n+1}}{n!} \right]=
$$
$$
 \frac{pe^{-(n+1)\mu\tau}}{\chi((n+1)\mu)}\left[\sigma^2\binom{n}{1}+ \frac{\sigma^3}{2!} \binom{n}{2}+\dots+\frac{\sigma^{n+1}}{n!}\binom{n}{n} \right]= 
$$
$$
 \frac{pe^{-(n+1)\mu\tau}}{\chi((n+1)\mu)}\frac{1}{2\pi i}\oint_{|z|=\epsilon}(1+z)^n\left[\frac{\sigma^2}{z^2}+ \frac{\sigma^3}{z^32!} +\dots+\frac{\sigma^{n+1}}{z^{n+1}n!} \right]dz\leq
$$
$$
 \frac{pe^{-(n+1)\mu\tau}}{\chi((n+1)\mu)}\frac{1}{2\pi i}\oint_{|z|=\epsilon}\frac{\sigma(1+z)^n}{z}\left[\frac{\sigma}{z}+ \frac{\sigma^2}{z^22!} +\dots+\frac{\sigma^n}{z^{n}n!}+\dots \right]dz\leq 
$$
$$
 \frac{\sigma pe^{-(n+1)\mu\tau}}{\chi((n+1)\mu)}(e^{\sigma/\epsilon}-1)(1+\epsilon)^n \leq \sigma |\bar q_2|(e^{\sigma/\epsilon}-1)e^{\mu\tau} (e^{-\mu\tau}(1+\epsilon))^n \leq \sigma
$$
whenever $|\bar q_2|(e^{\sigma/\epsilon}-1)(1+\epsilon)\leq 1$ and $e^{-\mu\tau}(1+\epsilon) <1$. Thus,  for each $\epsilon \in (0, e^{\mu\tau}-1)$,  the Dirichlet 
series (\ref{zns}) with $q_1 =\epsilon\ln(1+[|\bar q_2|(1+\epsilon)]^{-1})$   converges absolutely and uniformly on each closed subset of 
$(-\infty, \tau -\mu^{-1}\ln(1+\epsilon))$. Equivalently, the shifted series 
$$
u^*(t):= u(t-\mu^{-1}\ln q_1) =  e^{\mu t} +\frac{q_2}{q^2_1} e^{2\mu t} + \frac{q_3}{q_1^3}e^{3\mu t} + \dots + \frac{q_n}{q_1^n} e^{n\mu t} + \dots  $$
converges for all $t < T= \tau -\mu^{-1}\ln(1+\epsilon)+\mu^{-1}\ln q_1$. Hence,  equation  (\ref{defn}) has an analytic solution $u^*(t)$ defined and positive on some interval $(-\infty,T_1] \subset 
(-\infty,T)$.  Clearly, integrating  (\ref{defn}) step by step, we can extend this solution for all $t \in \R$. Next, the set $X:= C([-\tau,0], (0,+\infty))$ is invariant with respect to the semi-flow $\Phi^t$ generated by  (\ref{defn}) so  that $u^*(t) >0$ for all $t \in \R$.  Furthermore,  since all solutions to (\ref{defn}) are uniformly persistent when $p>1$ \cite[Theorem 2.4]{BBI}, the trajectory $\Phi^tu^*_0$, where $u^*_0(s) = u^*(s),\ s \in [-\tau,0]$,  has a compact $\omega$-limit set in $X$.  In particular,  $u^*(t)$ is bounded and $\liminf_{t \to +\infty} u^*(t) >0$.  Then by the classical Nussbaum theorem \cite{Nus}, $u^*$ is a real analytic function on $\R$. Finally, the uniqueness of $u^*$ was established in the proofs of  \cite[Lemmas 6 and 8]{FT}.  Alternatively, it can be deduced from the general uniqueness Theorem 3 in \cite{AGT}. \qed
\end{proof}
\begin{remark} Suppose that either $1< p \leq e^2$ or $p> e^2$ and 
\begin{equation}\label{gsc}
e^{-\tau} > P\ln\frac{P^2+P}{P^2+1}, \quad \mbox{where} \ P =\ln p -1. 
\end{equation}
Then \cite[Theorem 2.1]{LTT} implies that the positive solution $u^*(t)$ given in Theorem \ref{T3} converges at $+\infty$: $u^*(+\infty)=\ln p$.
\end{remark}
\begin{example} As in Example \ref{Ex1}, take $\tau =0.07$ and  $p =365$ and choose $\epsilon = 2.2$ $< e^{\mu \tau}-1 = 9.536\dots $.  Then Theorem \ref{T3} provides  the convergence interval $(-\infty,   
0.079] \supset \R_-$ for the series (\ref{Zns}).  In addition,  in  this particular case,  $|\bar q_j|$ are decreasing which suggests the following estimates for 
$u^*(t)$ in the spirit of the alternating series test: 
\begin{equation}\label{ast}
u_2(t):=e^{\mu t} +\bar q_2 e^{2\mu t} < u^*(t) < e^{\mu t}=:u_1(t), \quad t \leq 0. 
\end{equation}
Our next result shows that this estimate is  indeed true. 
 \end{example}
\begin{thm} Solution $u^*(t)$ described in Theorem \ref{T3} and normalized at $-\infty$ by (\ref{Zns}) satisfies  (\ref{ast}).   
\end{thm}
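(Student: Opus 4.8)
The plan is to prove the two one-sided bounds separately by a first-touch (maximum-principle) argument for the delayed equation (\ref{defn}), comparing $u^*$ with the explicit functions $u_1(t)=e^{\mu t}$ and $u_2(t)=e^{\mu t}+\bar q_2 e^{2\mu t}$ on $(-\infty,0]$. Throughout I would set $y=e^{\mu t}\in(0,1]$ and use the two identities coming from $\chi(\mu)=0$ and the definition of $\bar q_2$, namely $\mu+1=pe^{-\mu\tau}$ and $(2\mu+1)\bar q_2=pe^{-2\mu\tau}(\bar q_2-1)$. The Dirichlet expansion (\ref{Zns}) supplies the sign data at $-\infty$: since $\bar q_2<0$ and $\bar q_3>0$, one has $u_1(t)-u^*(t)=|\bar q_2|e^{2\mu t}(1+o(1))>0$ and $u^*(t)-u_2(t)=\bar q_3 e^{3\mu t}(1+o(1))>0$ as $t\to-\infty$, so both differences $w:=u_1-u^*$ and $z:=u^*-u_2$ are strictly positive on some half-line $(-\infty,a]$.

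For the upper bound I would suppose $w$ vanishes somewhere on $(-\infty,0]$ and take $t_0$ to be the infimum of such points, so that $w>0$ on $(-\infty,t_0)$, $w(t_0)=0$, and $w'(t_0)\le0$. Since $u_1$ solves the linear equation $u_1'=-u_1+pu_1(t-\tau)$ exactly and $u^*(t_0)=u_1(t_0)$, a direct computation gives $w'(t_0)=pu_1(t_0-\tau)-f(u^*(t_0-\tau))$. As $t_0-\tau<t_0$ lies where $w>0$, we have $u_1(t_0-\tau)>u^*(t_0-\tau)>0$, whence
\[
w'(t_0)>pu^*(t_0-\tau)-f(u^*(t_0-\tau))=pu^*(t_0-\tau)\bigl(1-e^{-u^*(t_0-\tau)}\bigr)>0,
\]
a contradiction. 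Hence $u^*(t)<e^{\mu t}\le1$ on $(-\infty,0]$; note this step is completely general and uses nothing about $f$ beyond $e^{-u}<1$.

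For the lower bound I would run the same scheme with $z=u^*-u_2$. If $z(t_1)=0$ at the first such $t_1\le0$, then using $u^*(t_1)=u_2(t_1)$ one gets $z'(t_1)=f(u^*(t_1-\tau))-\bigl(u_2'(t_1)+u_2(t_1)\bigr)$. On $(-\infty,0]$ both $u^*$ and $u_2=y(1+\bar q_2 y)$ take values in $(0,1]$ (the bound on $u^*$ was just obtained, and $|\bar q_2|<1$ in the regime of the preceding example), and $f(u)=pue^{-u}$ is increasing there; since $u^*(t_1-\tau)>u_2(t_1-\tau)$ this yields $f(u^*(t_1-\tau))>f(u_2(t_1-\tau))$, so $z'(t_1)>g(t_1)$ where
\[
g(t):=f(u_2(t-\tau))-\bigl(u_2'(t)+u_2(t)\bigr).
\]
It then remains to verify the \emph{subsolution inequality} $g(t)\ge0$ for all $t\le0$; this forces $z'(t_1)>0$, contradicting $z'(t_1)\le0$ and proving $u^*>u_2$.

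The main obstacle is precisely the verification of $g\ge0$ on $(-\infty,0]$. Substituting the two identities, one has $u_2'(t)+u_2(t)=pe^{-\mu\tau}y+pe^{-2\mu\tau}(\bar q_2-1)y^2$ while $f(u_2(t-\tau))=ps\,e^{-s}$ with $s=ye^{-\mu\tau}+\bar q_2 y^2e^{-2\mu\tau}$. Expanding $se^{-s}$ shows that the $y$- and $y^2$-coefficients cancel by construction and that the leading term of $g$ is $\bar q_3\chi(3\mu)\,y^3>0$, so $g>0$ automatically for small $y$ (near $-\infty$), in agreement with the series; the real difficulty is to propagate positivity up to $y=1$ (that is, $t=0$), where the inequality is genuinely tight (cf. the numbers in the preceding example). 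I expect to control this by replacing $e^{-s}$ with an elementary polynomial lower bound such as $e^{-s}\ge1-s+s^2/2-s^3/6$ and reducing $g\ge0$ to a polynomial inequality in $y\in(0,1]$ with coefficients explicit in $\mu,\tau,p,\bar q_2$, whose validity relies on the smallness of $e^{-\mu\tau}$ and $|\bar q_2|$. A more combinatorial alternative is to prove directly from the recursion in Theorem \ref{T3} that $|\bar q_j|$ is monotonically decreasing and then invoke the alternating-series estimate $u_2=S_2\le u^*\le S_1=u_1$; however, establishing monotonicity of $|\bar q_j|$ for every $j$ from the recursion appears more delicate than the single scalar inequality $g\ge0$, which is why I would favour the comparison argument.
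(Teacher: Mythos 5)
Your upper bound is complete and coincides with the paper's own argument (a first-touch comparison using $f(u)<pu$ and the fact that $u_1(t)=e^{\mu t}$ solves the linearization exactly). The genuine gap is in the lower bound: everything reduces, as you correctly say, to the subsolution inequality $g(t)\ge 0$, and you leave precisely that step unproven, offering only a plan (truncate $e^{-s}$ at third order and settle a polynomial inequality on $y\in(0,1]$ using ``smallness of $e^{-\mu\tau}$ and $|\bar q_2|$''). That plan would not prove the theorem as stated, since the statement carries no smallness hypotheses beyond those of Theorem \ref{T3}; your appeal to ``$|\bar q_2|<1$ in the regime of the preceding example'' has the same defect. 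Moreover, the step has an unconditional two-line proof obtained by truncating one order \emph{lower}: since $e^{-u}>1-u$ for $u>0$, one has $f(u_2(t-\tau))>p\bigl(u_2(t-\tau)-u_2^2(t-\tau)\bigr)$ wherever $u_2(t-\tau)>0$, and the two identities you yourself isolated, $\mu+1=pe^{-\mu\tau}$ and $(2\mu+1)\bar q_2=pe^{-2\mu\tau}(\bar q_2-1)$, make the $e^{\mu t}$ and $e^{2\mu t}$ terms cancel identically, leaving the exact formula
\[
u_2'(t)+u_2(t)-p\bigl(u_2(t-\tau)-u_2^2(t-\tau)\bigr)=p\bar q_2\, e^{3\mu(t-\tau)}\bigl(2+\bar q_2 e^{\mu(t-\tau)}\bigr),
\]
which is negative (hence $g(t)>0$) whenever $2+\bar q_2 e^{\mu(t-\tau)}>0$. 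Since $u_2(t)>0$ if and only if $1+\bar q_2 e^{\mu t}>0$, and $\bar q_2<0$, the positivity of $u_2(t)$ forces $1+\bar q_2 e^{\mu(t-\tau)}>0$, hence both $u_2(t-\tau)>0$ and $2+\bar q_2 e^{\mu(t-\tau)}>1$. So $u_2$ is a strict subsolution on the whole set $\{t:\,u_2(t)>0\}$, for all admissible parameters. Your worry that the inequality is ``genuinely tight'' at $y=1$ is an artifact of expanding one order too far: the quadratic truncation of $f$ leaves a residue of a single definite sign precisely because $\bar q_2<0$, and the cubic bound destroys this cancellation-plus-sign structure, manufacturing the difficulty you then could not resolve.

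Two further structural points. First, the paper does not need $u_2>0$ on all of $(-\infty,0]$: it runs the first-touch argument only at points where $u_2(t)>0$, since wherever $u_2(t)\le 0$ the inequality $u_2(t)<u^*(t)$ is automatic from $u^*>0$; adopting this removes any reliance on $|\bar q_2|<1$ (note that $u_2(a)>0$ at the touching point already gives $u_2(a-\tau)>0$ by the observation above, and $u_2(a-\tau)<u^*(a-\tau)<1$ then lets you use monotonicity of $f$ on $[0,1]$, exactly as you intended). Second, your ``combinatorial alternative'' via monotonicity of $|\bar q_j|$ and the alternating-series test is indeed weaker than the comparison argument, but for a reason beyond the one you give: Theorem \ref{T3} guarantees convergence of the Dirichlet series only on $(-\infty,T)$ with $T$ possibly finite and not necessarily exceeding $0$, whereas (\ref{ast}) is asserted on all of $(-\infty,0]$; the paper mentions the alternating-series estimate only as a heuristic in the example and proves the theorem by the comparison route.
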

\begin{proof} 
In view of  Theorem \ref{T3},  the inequalities (\ref{ast}) hold on some interval $(-\infty, L)$ with $L\leq 0$. It is easy to see that $u^*(t) < u_1(t)$ 
for all $t \in \R$. Indeed, suppose that  $u_1(a)=u^*(a)$ at some leftmost point $a$. Then $u'_1(a)\leq (u^*)'(a)$, $u_1(a-\tau)> u^*(a-\tau)$,  so that 
$$
0 = (u^*)'(a) +u^*(a)- f(u^*(a-\tau)) >  (u^*)'(a) +u^*(a)- pu^*(a-\tau) > $$
$$ (u_1)'(a) +u_1(a)- pu_1(a-\tau)=0, 
$$
a contradiction. Hence $u^*(t) < e^{\mu t}$ for all  $t \in \R$. In particular, $u^*(t) <1$ for all $ t \leq 0$. 

Next, we have that 
$$
u_2'(t) +u_2(t)- f(u_2(t-\tau)) < u'_2(t)+u_2(t)-p(u_2(t-\tau)-u_2^2(t-\tau))= 
p\bar q_2e^{3\mu (t-\tau)}(2+\bar q_2 e^{\mu (t-\tau)}). 
$$
Since $u_2(t) >0$ if and only if  $1+\bar q_2 e^{\mu t} >0$, we conclude that $u_2'(t) +u_2(t) -f(u_2(t-\tau))<0$ 
whenever $u_2(t) >0$. 

We claim that 
actually the first inequality in (\ref{ast}) also  holds  for all $t \leq 0$. Since $u^*(t) >0$ for all $t\in \R$, it suffices  to prove it only for  those  $t<0$ for which  $u_2(t)>0$. 
Arguing by contradiction again, suppose that $u_2(a)=u^*(a)>0$ at some leftmost point $a\leq 0$. Then $u'_2(a)\geq (u^*)'(a)$, $u_2(a-\tau)< u^*(a-\tau) <1$  so that 
$$
0 = (u^*)'(a) +u^*(a)- f(u^*(a-\tau))  <  (u_2)'(a) +u_2(a)-f(u_2(a-\tau)) <0, 
$$
a contradiction. Observe here that the strict monotonicity of $f(x)=pxe^{-x}$ on the interval $[0,1]$ plays an essential role in the last argument. 
\qed
\end{proof}
We will also  need the following property of $u^*(t)$: 
\begin{lem}\label{Le7}
$(u^*)'(t)>0$  on some maximal interval $(-\infty,M)\subseteq \R$.  Clearly, if $M =+\infty$, then $u^*(M) =\ln p$. If $M$ is finite, then $u^*(M) >\ln p$.   
\end{lem}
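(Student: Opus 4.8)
The plan is to exploit the unimodal shape of the birth function $f(u)=pue^{-u}$ together with the local behaviour of $u^*$ near $-\infty$. First I would record two elementary facts about $f$ on $(0,+\infty)$: since $f(u)-u=u(pe^{-u}-1)$, one has $f(u)>u$ precisely when $0<u<\ln p$, so $\ln p$ is the unique positive fixed point of $f$; and since $f'(u)=pe^{-u}(1-u)$, the map $f$ is strictly increasing on $[0,1]$ and strictly decreasing on $[1,+\infty)$. From the Dirichlet expansion (\ref{Zns}) we have $(u^*)'(t)=\mu e^{\mu t}(1+o(1))>0$ as $t\to-\infty$, so $(u^*)'$ is positive on a left neighbourhood of $-\infty$; I would then define $M\in(-\infty,+\infty]$ as the supremum of those $s$ with $(u^*)'>0$ on $(-\infty,s)$. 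This produces the maximal interval, and when $M$ is finite the analyticity of $u^*$ together with maximality forces $(u^*)'(M)=0$.

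If $M=+\infty$, then $u^*$ is increasing and, by Theorem \ref{T3}, bounded, hence $u^*(t)\to L$ for some $L\ge\liminf_{t\to+\infty}u^*(t)>0$. Letting $t\to+\infty$ in (\ref{defn}) shows $(u^*)'(t)\to -L+f(L)$; since $u^*$ is bounded, this limit must vanish, so $f(L)=L$ and therefore $L=\ln p$, which is the first claim.

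For the finite-$M$ case I would evaluate (\ref{defn}) at $t=M$ and use $(u^*)'(M)=0$ to get $u^*(M)=f(u^*(M-\tau))$. Writing $a:=u^*(M-\tau)$ and $b:=u^*(M)$, strict monotonicity of $u^*$ on $(-\infty,M]$ and $\tau>0$ give $a<b$. From $b=f(a)>a$ and the first fact about $f$ one deduces $a<\ln p$. To bound $a$ from below I would differentiate (\ref{defn}) to obtain $(u^*)''(M)=f'(a)\,(u^*)'(M-\tau)$; since $M$ is the right endpoint of the interval where $(u^*)'>0$ and $(u^*)'(M)=0$, the left difference quotients of $(u^*)'$ at $M$ are nonpositive, so $(u^*)''(M)\le 0$, while $(u^*)'(M-\tau)>0$. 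Hence $f'(a)\le 0$, i.e. $a\ge 1$. Combining, $1\le a<\ln p$ (in particular $\ln p>1$), and since $f$ is strictly decreasing on $[1,+\infty)$ and fixes $\ln p$, I conclude $b=f(a)>f(\ln p)=\ln p$, as required.

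The main obstacle is precisely this finite-$M$ case: the inequality $a<\ln p$ by itself is not enough, and the overshoot $b>\ln p$ comes entirely from the unimodality of $f$ forcing $a\ge1$. The delicate step is therefore justifying $(u^*)''(M)\le0$ and converting the sign of $f'(a)$ into $a\ge1$; once $a$ is pinned into $[1,\ln p)$, the strict decrease of $f$ beyond its maximum at $u=1$ delivers the conclusion.
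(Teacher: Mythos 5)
Your proof is correct and takes essentially the same route as the paper: existence of $M$ from the Dirichlet-series asymptotics $(u^*)'(t)=\mu e^{\mu t}(1+o(1))$, and in the finite-$M$ case the identity $(u^*)''(M)=f'(u^*(M-\tau))(u^*)'(M-\tau)$ combined with $(u^*)''(M)\le 0$, $(u^*)'(M-\tau)>0$ to force $u^*(M-\tau)\ge 1$, after which the strict decrease of $f$ on $[1,+\infty)$ and $f(\ln p)=\ln p$ yield the overshoot $u^*(M)>\ln p$. The only (harmless) deviations are that you exclude $u^*(M-\tau)\ge\ln p$ outright from $f(a)=b>a$, where the paper simply disposes of that possibility as a separate trivial case via $u^*(M)>u^*(M-\tau)\ge\ln p$, and that you spell out the standard limiting argument for the $M=+\infty$ claim, which the paper labels as clear.
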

\begin{proof}
By differentiating (\ref{Zns}), we find that $(u^*)'(t) = \mu e^{\mu t}(1+ o(1)) > 0$ at $t =-\infty$, from which the existence of the above mentioned $M$ follows. Suppose that 
$M$ is finite, then $(u^*)'(M) =0, $ $(u^*)''(M) \leq 0, $ $(u^*)'(M-\tau) > 0$. Since  (\ref{defn}) implies the relation 
$(u^*)''(M) = f'(u^*(M-\tau))(u^*)'(M-\tau)$, we obtain $f'(u^*(M-\tau))\leq 0$ so that  $u^*(M-\tau)\geq 1$. Clearly, if also $u^*(M-\tau)\geq \ln p$ then 
$u^*(M)> u^*(M-\tau)\geq \ln p$ and the lemma is proved in such a case. On the other hand, if $u^*(M-\tau) \in [1,\ln p)$  then the statement follows from the relation 
$u^*(M) = f(u^*(M-\tau)) > \ln p$.  
\qed
\end{proof}
Now, 
looking for wavefronts to (\ref{NBD}) in the form 
$$
u(t,x) = \phi(\sqrt{\epsilon} x + t), \quad \epsilon = c^{-2}, 
$$
we obtain the profile equation 
\begin{equation}\label{def}
\epsilon \phi''(t) -\phi'(t) - \phi(t) +  f(\phi(t-\tau))=0. 
\end{equation}
Equation (\ref{def}) was analyzed  in \cite{AGT,FT,AMC,GTLMS,LM,LLLM,SZ,TTT}.  Since the first derivative of $f(x)=pxe^{-x}$ is dominated by $f'(0)=p$, 
$|f'(x)| \leq p$ for all $x \geq 0$,  the uniqueness (up to translation) of each semi-wavefront $\phi(t,\epsilon)$ to (\ref{def})  follows from \cite[Theorem 7]{AGT}.   
Next, by \cite{SZ} or \cite[Corollary 12]{TTT} if $1 < p \leq e$ (i.e. if $f(x)$ is monotone on $[0,\ln p]$) then $\phi'(t,\epsilon)>0$ for all $t \in \R$. Now, when $p \in (e,e^2]$, 
$f(x)$ is no longer monotone on  $[0,\ln p]$. Nevertheless, in such a case  $f'(\ln p) \leq f'(x) \leq f'(0)$ for all $x \in [0,\ln p]$ and $f(x)$ {satisfies the feedback }condition on the interval {$(f(p/e),p/e)\setminus \ln p$} (see condition (\ref{fco}) in Appendix). Therefore \cite{GTLMS,TTT} assure that   if $e < p \leq e^2$ then $\phi(t,\epsilon)$ is either monotone or sine-like slowly oscillating around $\ln p$ at $+\infty$;   moreover, the type of monotonicity/oscillation property  is linearly determined.   Similarly, \cite[Theorem 2.5]{GTLMS} implies that $\phi(t,\epsilon)$ is strictly increasing when $p >e^2$ and $p\tau e^{\tau -1} \leq 1$. Next, it is easy to see that the characteristic equation $\chi_+(z,\epsilon): = \epsilon z^2-z -1 -P e^{-z\tau}$ does not have real negative roots when $P\tau e^{1+\tau} \geq 1$. Then 
\cite[Lemma 25]{TTT} implies that $\phi(t,\epsilon)$ cannot be eventually monotone if $P\tau e^{1+\tau} \geq 1$. Hence, we have established the following result: 
\begin{lem} \label{Le77} Suppose that equation (\ref{NBD}) has a nm-wave. Then  necessarily $p > e^2$, $P\tau e^{1+\tau} < 1$ and $p\tau e^{\tau -1} > 1$. In particular,
$\tau \in (0,\tau_*)$ where $\tau_*= 0.278\dots$ solves equation $\tau e^{1+\tau} =1$. 
\end{lem}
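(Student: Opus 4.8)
The plan is to read off the three inequalities by systematically excluding every parameter regime in which the classification results collected just above force the profile $\phi(t,\epsilon)$ of (\ref{def}) to be incompatible with the defining features of an nm-wave. Recall that such a wave is a genuine wavefront (so $\phi(-\infty)=0$, $\phi(+\infty)=\ln p$) whose profile is simultaneously \emph{non-monotone on $\R$} and \emph{non-oscillating}, i.e.\ eventually monotone at $+\infty$. The strategy is to use each of these two properties as a separate obstruction: non-monotonicity rules out the regimes in which $\phi$ must increase globally, while eventual monotonicity rules out the regime in which $\phi$ must slowly oscillate about $\ln p$.

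First I would dispose of the small-$p$ range. If $1<p\le e$, then $f$ is monotone on $[0,\ln p]$ and \cite{SZ} (or \cite[Corollary 12]{TTT}) gives $\phi'(t,\epsilon)>0$ for all $t$, contradicting non-monotonicity. If $e<p\le e^2$, the feedback structure of $f$ on $(f(p/e),p/e)\setminus\{\ln p\}$ together with \cite{GTLMS,TTT} forces the dichotomy that $\phi$ is either monotone or sine-like slowly oscillating at $+\infty$; the first alternative contradicts non-monotonicity and the second contradicts eventual monotonicity. Hence $p>e^2$, and in particular $P=\ln p-1>1$. Next, working in the range $p>e^2$, I would invoke \cite[Theorem 2.5]{GTLMS}: if $p\tau e^{\tau-1}\le 1$ then $\phi$ is strictly increasing, again contradicting non-monotonicity, so $p\tau e^{\tau-1}>1$.

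For the oscillation side I would use that $\chi_+(z,\epsilon)=\epsilon z^2-z-1-Pe^{-z\tau}$ has no negative real zero whenever $P\tau e^{1+\tau}\ge 1$, so that \cite[Lemma 25]{TTT} precludes eventual monotonicity of $\phi$; since an nm-wave is eventually monotone, we must have $P\tau e^{1+\tau}<1$. This yields all three stated inequalities. The concluding bound on $\tau$ is then purely elementary: from $P>1$ and $P\tau e^{1+\tau}<1$ one gets $\tau e^{1+\tau}<1/P<1$, and since $g(\tau):=\tau e^{1+\tau}$ is strictly increasing on $(0,\infty)$ with $g(\tau_*)=1$, this forces $\tau<\tau_*$, i.e.\ $\tau\in(0,\tau_*)$.

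The only genuinely delicate point — and the one I would check most carefully — is the correct application of the eventual-monotonicity criterion \cite[Lemma 25]{TTT}: one must make sure that the relevant obstruction is exactly the absence of a negative real zero of $\chi_+(\cdot,\epsilon)$, and that it is precisely the non-oscillation property of the nm-wave (rather than its non-monotonicity) that activates it, since each of the two defining properties eliminates a different regime. All the substantive analytic work sits inside the cited classification and oscillation theorems; the lemma itself is their clean logical synthesis, and I would expect the write-up to be short once the case distinction $1<p\le e$, $e<p\le e^2$, $p>e^2$ is organized as above.
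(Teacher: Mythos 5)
Your proposal is correct and takes essentially the same route as the paper: the paper's proof is exactly the case analysis you describe, excluding $1<p\le e$ via \cite{SZ} and \cite[Corollary 12]{TTT}, excluding $e<p\le e^2$ via the monotone/slowly-oscillating dichotomy of \cite{GTLMS,TTT}, deriving $p\tau e^{\tau-1}>1$ from \cite[Theorem 2.5]{GTLMS}, and deriving $P\tau e^{1+\tau}<1$ from the absence of negative real zeros of $\chi_+$ together with \cite[Lemma 25]{TTT}, with the bound $\tau<\tau_*$ following from $P>1$ and the monotonicity of $\tau e^{1+\tau}$ just as you argue.
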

\begin{cor} \label{Co8} If equation (\ref{NBD}) has a nm-wave then $e^{-\mu\tau} <0.5$ and $\bar q_2 \in (-1,0)$. 
\end{cor}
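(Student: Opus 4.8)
The claim $\bar q_2<0$ is already part of Theorem~\ref{T3}, so the two things to establish are $e^{-\mu\tau}<0.5$ and the lower bound $\bar q_2>-1$; the plan is to arrange the argument so that the second follows cheaply from the first. Everything rests on exploiting the two inequalities furnished by Lemma~\ref{Le77}, namely $p\tau e^{\tau-1}>1$ and $\tau\in(0,\tau_*)$ with $\tau_* e^{1+\tau_*}=1$, together with the defining relation $\chi(\mu)=0$, i.e.\ $pe^{-\mu\tau}=\mu+1$.

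First I would prove $e^{-\mu\tau}<0.5$, equivalently $\mu\tau>\ln 2$. Since $\chi'(z)=1+p\tau e^{-z\tau}>0$, the function $\chi$ is strictly increasing and vanishes only at $z=\mu$; hence it suffices to check that $\chi(\tau^{-1}\ln 2)<0$. A direct evaluation gives $\chi(\tau^{-1}\ln 2)=\tau^{-1}\ln 2+1-p/2$, so $\chi(\tau^{-1}\ln 2)<0$ is exactly $p>2+2\tau^{-1}\ln 2$. To obtain this from the hypotheses I would use $p>e^{1-\tau}/\tau$ (this is $p\tau e^{\tau-1}>1$) and reduce to the purely elementary inequality $e^{1-\tau}>2\tau+2\ln 2$ on $(0,\tau_*)$. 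The function $h(\tau):=e^{1-\tau}-2\tau-2\ln2$ is strictly decreasing, and at the right endpoint the identity $\tau_* e^{1+\tau_*}=1$ yields $e^{1-\tau_*}=e^{2}\tau_*$, so that $h(\tau_*)=e^2\tau_*-2\tau_*-2\ln2>0$; hence $h>0$ throughout $(0,\tau_*)$. Combining, $p>e^{1-\tau}/\tau>2+2\tau^{-1}\ln 2$, and the first bound follows.

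For the lower bound on $\bar q_2$, recall $\chi(2\mu)=2\mu+1-pe^{-2\mu\tau}>0$, so $\bar q_2=-pe^{-2\mu\tau}/\chi(2\mu)>-1$ is equivalent to $2pe^{-2\mu\tau}<2\mu+1$. Writing $pe^{-2\mu\tau}=(pe^{-\mu\tau})e^{-\mu\tau}=(\mu+1)e^{-\mu\tau}$ via $\chi(\mu)=0$ and invoking the already proved bound $e^{-\mu\tau}<1/2$, one gets $2pe^{-2\mu\tau}=2(\mu+1)e^{-\mu\tau}<\mu+1<2\mu+1$, which is exactly what is needed. Together with $\bar q_2<0$ this yields $\bar q_2\in(-1,0)$.

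The only genuinely non-mechanical point is locating the right test value $z=\tau^{-1}\ln 2$ for $\chi$ and noticing that the resulting condition $p>2+2\tau^{-1}\ln 2$ is dominated by the constraint $p\tau e^{\tau-1}>1$ precisely on the admissible range $\tau\in(0,\tau_*)$; the verification $h(\tau_*)>0$ is where the defining equation of $\tau_*$ is used, and it is the tightest step in the whole estimate.
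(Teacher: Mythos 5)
Your proof is correct, and while it rests on the same ingredients as the paper's (the necessary conditions $p\tau e^{\tau-1}>1$ and $\tau<\tau_*$ from Lemma \ref{Le77}, the relation $pe^{-\mu\tau}=\mu+1$, and the reduction of $\bar q_2>-1$ to $e^{-\mu\tau}<1/2$), the mechanism you use for the key bound is genuinely different. The paper substitutes $pe^{-\mu\tau}=\mu+1$ into $p>e^{1-\tau}/\tau$ to obtain the self-referential inequality $\rho>e^{1-\rho}$ for $\rho=\tau(\mu+1)$, whence $\rho>1$, i.e. $\mu\tau>1-\tau$, and then $e^{-\mu\tau}<e^{\tau-1}\leq e^{\tau_*-1}=0.48\dots$; this is almost endpoint-free (only the evaluation $e^{\tau_*-1}<1/2$ is numerical) and yields a slightly sharper constant than your $1/2$. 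Your test-point argument $\chi(\tau^{-1}\ln 2)<0$ instead targets the threshold $\ln 2$ exactly, trading the paper's trick for the one-variable inequality $h(\tau)=e^{1-\tau}-2\tau-2\ln 2>0$ on $(0,\tau_*)$, verified via monotonicity and the neat endpoint identity $e^{-\tau_*}=e\tau_*$; as you note, the margin $(e^2-2)\tau_*-2\ln 2\approx 0.11$ is the tightest point of your route, whereas the paper's has more slack, and both verifications are at the same (mildly numerical) level of rigor as the paper's own $0.48\dots$ and $0.94\dots$. Your second half is, up to algebra, the paper's: writing $\chi(2\mu)=\mu+pe^{-\mu\tau}(1-e^{-\mu\tau})$, the paper discards $\mu>0$ to get $-\bar q_2<e^{-\mu\tau}/(1-e^{-\mu\tau})<1$, which is exactly your chain $2pe^{-2\mu\tau}=2(\mu+1)e^{-\mu\tau}<\mu+1<2\mu+1$; nothing is lost either way. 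Net effect: equally elementary proofs, with yours isolating the exact constant $1/2$ cleanly and the paper's delivering the stronger $\tau$-dependent bound $e^{-\mu\tau}<e^{\tau-1}$ for free.
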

\begin{proof}
Indeed, $\mu+1= pe^{-\mu \tau} > e^{1-(\mu+1) \tau}/\tau$. Set $\rho = \tau(\mu+1)$, then $\rho > e^{-\rho+1}$ implying that $\rho >1$.  Consequently, $-\tau \mu < \tau -1$ and $e^{-\tau \mu} < e^{-1+\tau} <  e^{-1+\tau_*}=0.48\dots$, 
$
-\bar q_2 = pe^{-2\mu\tau}/\chi(2\mu)= pe^{-2\mu\tau}/(\mu+ p(e^{-\mu\tau} -e^{-2\mu\tau}))<  e^{-\mu\tau}/(1 -e^{-\mu\tau}) =0.94 \dots
$ \qed
\end{proof}
In the next stage of our proof, we continue to analyze the heteroclinic solution $u^*(t)$ given in Theorem \ref{T3}.  This time, we will take parameters $(\tau, p)$ as in Lemma \ref{Le77} which will have  important consequences for the form and estimates of $u^*(t)$.  For example, in such a case, due to Corollary \ref{Co8} the lower bound  $u_2(t)$ in (\ref{ast}) is positive for all $t \leq 0$.  Especially we will be interested 
in the oscillation properties (around the equilibrium $\ln p$) of $u^*(t)$. Since $u^*(t)$ is a real analytic function on $\R$, the set of all solutions of equation $u^*(t)=\ln p$
is either an empty set or can be represented as a strictly increasing sequence $\mathcal S=  \{t_j\}$ of positive numbers. By the same reason,  if the sequence $\mathcal S$  is infinite, it should converge to $+\infty$. Our next goal is to prove that $t_j$ are simple zeros of $u^*(t)-\ln p$ and that $t_{j+1} -t_j > \tau$.  The main complication here consists in the fact that the birth function  $f(x)=pxe^{-x}$ for $p > 16.99\dots$ does not satisfy the  feedback condition (\ref{fco}), cf. \cite{GTLMS,TTT}. Our arguments below are inspired by the classical theory of J. Mallet-Paret \cite{mp}.  
\begin{lem} \label{Le9} Suppose that $p > e^2$ and $P\tau e^{1+\tau} < 1$. Then inequality (\ref{gsc}) is satisfied so that $u^*(+\infty)=\ln p$ and, in fact, 
 $u^*(t)$ is eventually monotone solution.   
Furthermore, the above defined sequence  $\{t_j\}$ has at most a  finite number  of elements, $t_1  <t_2 < \dots < t_m$,  $m\in \N$ and  $t_{j+1}-t_j > \tau$, $(u^*)'(t_{j})\not=0$ for each such $t_j$. 
\end{lem}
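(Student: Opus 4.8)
The plan is to treat the four assertions — validity of (\ref{gsc}), eventual monotonicity, finiteness of $\{t_j\}$, and the spacing/simplicity of the $t_j$ — one at a time, reusing the a priori information on $u^*$ gathered in Theorem \ref{T3}, Lemma \ref{Le7} and Corollary \ref{Co8}. First I would dispose of (\ref{gsc}) by an elementary estimate. Writing $F(P):=P\ln\frac{P^2+P}{P^2+1}=P\ln\!\left(1+\frac{P-1}{P^2+1}\right)$ and using $\ln(1+x)<x$, one gets $F(P)<\frac{P(P-1)}{P^2+1}=:G(P)$. Since $G(P)=1-\frac{P+1}{P^2+1}$, the bound $-\ln(1-y)>y$ gives $-\ln F(P)>-\ln G(P)>\frac{P+1}{P^2+1}$. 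On the other hand, the hypothesis $P\tau e^{1+\tau}<1$ forces $\tau<\frac{1}{Pe^{1+\tau}}<\frac{1}{Pe}$, and an elementary comparison (equivalent to $(e-1)P^2+eP-1>0$, which holds for all $P>1$) yields $\frac{1}{Pe}\le\frac{P+1}{P^2+1}$. Chaining these, $\tau<-\ln F(P)$, i.e. $e^{-\tau}>F(P)$, which is exactly (\ref{gsc}); the Remark after Theorem \ref{T3} (via \cite[Theorem 2.1]{LTT}) then gives $u^*(+\infty)=\ln p$.

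For eventual monotonicity I would linearize (\ref{defn}) at $\ln p$. Since $f'(\ln p)=-P$, the relevant characteristic function is $g(z)=z+1+Pe^{-z\tau}$, which is convex with a unique minimum at $z^*=\tau^{-1}\ln(P\tau)<0$ of value $g(z^*)=\tau^{-1}(\ln(P\tau)+1)+1$. A direct computation shows $g(z^*)<0$ precisely when $P\tau e^{1+\tau}<1$, our standing hypothesis; hence $g$ has two negative real roots $z_1<z^*<z_2<0$, with $z_2$ the rightmost zero of $g$ in $\C$. The asymptotics of the converging solution $u^*(t)-\ln p$ are then driven by a real eigenvalue, and, ruling out oscillatory super-exponential convergence by the Mallet-Paret--Cao--Arino theory for scalar equations \cite{OA,Cao,mp} (the $\epsilon=0$ analogue of Lemma \ref{so}), $u^*(t)-\ln p$ is asymptotic to $Ce^{z_2 t}$, so $u^*$ is eventually monotone. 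Finiteness of $\{t_j\}$ is then immediate: $u^*-\ln p$ is real analytic and not identically zero, so its zeros are isolated and cannot accumulate at a finite point, while eventual monotonicity together with $u^*(+\infty)=\ln p$ removes all zeros beyond some $T_0$; thus $t_1<\dots<t_m$.

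The crux is the spacing and simplicity of the $t_j$. At a crossing $t_j$ the equation gives $(u^*)'(t_j)=-\ln p+f(u^*(t_j-\tau))$; since $f$ is unimodal with $f(\ln p)=\ln p$ and a single second preimage $x_0\in(0,1)$, the sign of $(u^*)'(t_j)$ is tied to whether $u^*(t_j-\tau)$ lies in $[x_0,\ln p]$ or outside it. If $t_{j+1}-t_j\le\tau$, then every delayed argument $t-\tau$, $t\in[t_j,t_{j+1}]$, lies in $(-\infty,t_j]$, and comparing the sign conditions forced at the up-crossing $t_j$ and the down-crossing $t_{j+1}$ with the behaviour of $u^*$ on $[t_j-\tau,t_j]$ should yield a contradiction. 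For $j=1$ this is transparent: by \cite[Theorem 1]{TTT} the leading edge is strictly increasing on $(-\infty,t_1]$, so the two delayed values are strictly ordered one way while the sign conditions order them the opposite way. For $j\ge 2$, and for the simplicity claim $(u^*)'(t_j)\neq 0$ — where a degenerate crossing would force either $u^*(t_j-\tau)=\ln p$, contradicting the spacing, or $u^*(t_j-\tau)=x_0$, excluded by a second-order test on $(u^*)''(t_j)=f'(x_0)(u^*)'(t_j-\tau)$ — I would run an inductive version of this comparison organised, in the spirit of J. Mallet-Paret \cite{mp}, through the discrete Lyapunov (sign-change) functional of $s\mapsto u^*(t+s)-\ln p$.

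The hard part will be that this Lyapunov functional is monotone only under a feedback condition which $f(x)=pxe^{-x}$ violates for $p>16.99\dots$: delayed values may drop below $1$ onto the increasing branch of $f$, where the clean negative-feedback structure breaks down. Controlling these excursions — using the unimodal form of $f$, the negativity of its Schwarz derivative, and the a priori bounds (\ref{ast}) together with Corollary \ref{Co8} (so that $e^{-\mu\tau}<0.5$ and $\bar q_2\in(-1,0)$) — is the delicate point, with the eventual monotonicity established above supplying the bound on the number $m$ of crossings.
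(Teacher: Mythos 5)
Your first step is correct and, in fact, more elementary than what the paper does: the authors verify (\ref{gsc}) by showing that the curves $P\tau e^{1+\tau}=1$ and $e^{-\tau}=P\ln\frac{P^2+P}{P^2+1}$ never intersect (via a Taylor-coefficient comparison for $e^{\tau e}$ versus $\tau e^{1+\tau}+1$), whereas your chain $\tau<\frac{1}{Pe}\le\frac{P+1}{P^2+1}<-\ln G(P)<-\ln F(P)$, resting only on $\ln(1+x)<x$, $-\ln(1-y)>y$ and $(e-1)P^2+eP-1>0$ for $P>1$, is complete and checkable by hand. The computation of the two real negative roots of $z+1+Pe^{-z\tau}$ under $P\tau e^{1+\tau}<1$ also agrees with the paper.

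The genuine gap is the spacing claim $t_{j+1}-t_j>\tau$, which is the heart of the lemma and which your proposal leaves unresolved exactly where you flag it ("controlling these excursions \dots is the delicate point"). Worse, your ordering makes the eventual-monotonicity step circular: Cao's theorem says that \emph{slowly oscillating} solutions cannot be small; it does not exclude rapidly oscillating super-exponential tails (small solutions of delay equations are typically rapidly oscillating). So to invoke it, and likewise to rule out a complex driving eigenvalue via $\beta_j>2\pi/\tau$ (Mallet-Paret), you must already know the zeros of $u^*-\ln p$ are spaced by more than $\tau$ — precisely the unproved part 3. Meanwhile your part 3 plans to use "the eventual monotonicity established above" to bound $m$, so parts 2 and 3 feed on each other. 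In the paper the spacing is proved \emph{first}, by direct comparison at the crossings, and only then is the Cao/Mallet-Paret machinery applied to kill an infinite slowly oscillating tail.

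The missing mechanism — which your proposed toolkit (Schwarzian negativity, the bounds (\ref{ast}), Corollary \ref{Co8}) does not supply — is a variation-of-constants lower bound keeping $u^*$ above the critical point $1$ of $f$. At the first minimum $s_2\in(t_2,t_3)$ one has $u^*(s_2)=f(u^*(s_2-\tau))$, and since $f(x)>x$ on $(0,\ln p)$ this forces $u^*(s_2-\tau)>\ln p$, hence $s_2-t_2<\tau$; then
$$
u^*(s_2)=e^{-(s_2-t_2)}\ln p+\int_{t_2}^{s_2}e^{-(s_2-s)}f(u^*(s-\tau))\,ds>e^{-\tau}\ln p\ \ge\ 2e^{-\tau_*}>1,
$$
using $\ln p>2$ and $\tau<\tau_*$ (which follows from $P\tau e^{1+\tau}<1$, $P>1$). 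Thus $u^*(t)>1$ for all $t\ge t_1$: the delayed arguments never return to the increasing branch of $f$, the negative-feedback sign comparisons close crossing by crossing, and the degenerate value $u^*(t_j-\tau)=x_0<1$ is excluded outright — note that your proposed second-order test at $x_0$ would not work as stated, since the sign of $(u^*)'(t_j-\tau)$ there is not controlled; the paper's second-order test is applied only in the case $u^*(t_j-\tau)=\ln p$, where $t_j-\tau$ must coincide with the previous crossing. Without this quantitative step your induction does not go through for $p>16.99\dots$, which is exactly the parameter range where the lemma has content.
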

\begin{proof} It is easy to check that the smooth curves $\gamma_1, \gamma_2\subset \R_+$ defined by equations  $P\tau e^{1+\tau} = 1$ and $e^{-\tau} = P\ln\frac{P^2+P}{P^2+1}$, respectively,  intersect at some point $(\tau_0, P_0)$ if and only if $\tau_0$ satisfies 
equation $e^{\tau e}(1 + \tau^2 e^{2(1+\tau)}) = \tau e^{1+\tau}+1$. However, a straightforward comparison of the Taylor coefficients in this equation shows that $e^{\tau e}  >\tau e^{1+\tau}+1$
for all positive $\tau$. Hence, $\gamma_1\cap\gamma_2=\emptyset$.  Actually, an easy inspection shows that the domain bounded by $\gamma_1$ and the coordinate axes 
lies inside the domain bounded by $\gamma_2$ and the coordinate axes. 

Now, suppose that $t_1$ is the moment of the first intersection of the graph of $u^*(t)$ with the line $u =\ln p$. Then $(u^*)'(t_1) >0$ in view of Lemma \ref{Le7}. Suppose next that 
$t_2$ is  the moment of the second intersection of $u^*(t)$ with the line $u =\ln p$ and $u^*(s_1) : = \max\{u^*(t), \ t \in [t_1,t_2]\}$. Then $(u^*)'(s_1)=0, \ (u^*)'( t_2)\leq 0$, from which we obtain   $\ln p < u^*(s_1) = f(u^*(s_1-\tau))$, $\ln p=u^*(t_2) \geq f(u^*(t_2-\tau))$. In the case when $t_2-t_1\leq \tau$ we have, in addition,  that 
$\ln p \geq u^*(t_2-\tau) > u^*(s_1-\tau)$. Consider first the situation when $\ln p > u^*( t_2-\tau)$. Then  $\ln p \geq f(u^*(t_2-\tau))$ implies that $u^*(t_2-\tau) \leq \min f^{-1}(\ln p)$. 
On the other hand  $\ln p < u^*(s_1) = f(u^*(s_1-\tau))$ implies that  $u^*(s_1-\tau) > \min f^{-1}(\ln p)$. Thus $u^*(t_2-\tau) < u^*(s_1-\tau)$, a contradiction. Finally, suppose 
that $\ln p = u^*(t_2-\tau)=u^*(t_2)$. Then  $(u^*)'(t_2)=0$ implying the following contradictory relation $(u^*)''(t_2)=f'(u^*(t_2-\tau))(u^*)'(t_2-\tau)<0$. The above discussion shows 
that $ t_2-t_1>\tau$ and that $(u^*)'(t_2)<0$. 

By our convention, $u^*(t)< \ln p$ for $t\in (t_2,t_3)$ and $u^*(s_2) = \min\{u^*(t), \ t \in [t_2,t_3]\}$ for some leftmost $s_2\in (t_2,t_3)$. Again, $\ln p > u^*(s_2) = f(u^*(s_2-\tau))$. Since  
$u^*(s_2) < u^*(s_2-\tau)$, the latter implies  that $u^*(s_2-\tau) > \ln p$ and, consequently, that $s_2-t_2 \leq \tau$. 
Therefore, using the variation of constant formula, we get 
$$
u^*(s_2) = e^{-(s_2-t_2)}\ln p +\int_{t_2}^{s_2}e^{-(s_2-s)}f(u^*(s-\tau))ds > e^{-\tau}\ln p \geq e^{-\tau_*}\ln e^2 =  1.51\hbox{$\dots$} >1. 
$$
In particular,  $u^*(t) >1$ for all $t\in (t_2,t_3)$.  Now, since $(u^*)'(t_3) \geq 0$, we find that $\ln p=u^*(t_3) \leq f(u^*(t_3-\tau))$. This implies that $u^*(t_3-\tau) \leq \ln p$. Suppose 
that $u^*(t_3-\tau) = \ln p$, then obviously $t_3-\tau = t_2$ (since $t_1 < t_1+\tau < t_2 < t_3$ are the consecutive zeros of $u^*(t)-\ln p$) and $(u^*)''(t_3) = f'(\ln p)(u^*)'(t_2)>0$, a contradiction.  Consequently,  $u^*(t_3-\tau) < \ln p$, $(u^*)'(t_3) >0$ and $t_3-t_2 > \tau$. 

Next, $u^*(t)> \ln p$ for $t\in (t_3,t_4)$ and $(u^*)'(t_4) \leq 0$. Thus $\ln p=u^*(t_4) \geq f(u^*(t_4-\tau))$ from where  $u^*(t_4-\tau) \geq \ln p$ (here we are using the inequality 
$u^*(t) >1$ for $t \in  [t_1,t_4]$).  Suppose 
that $u^*(t_4-\tau) = \ln p$, then again we get  $t_4-\tau = t_3$ and $(u^*)''(t_4) = f'(\ln p)(u^*)'(t_3)<0$, a contradiction.  Consequently,  $u^*(t_4-\tau) > \ln p$, $(u^*)'(t_4) <0$ and $t_4-t_3 > \tau$. Clearly, the above described inductive steps can be continued to include all  points $t_j$. 

Finally, we will prove that the sequence  $\{t_j\}$ is finite. Linearizing (\ref{NB})  at the equilibrium $\ln p$, 
we find the associated characteristic function 
$
\chi_+(z) := z+1+Pe^{-z\tau}.  
$
After a straightforward computation, we find that  $\chi_+(z)$ has exactly two different real roots $z_2 < z_1<0$ if and only if $P\tau e^{1+\tau} < 1$.  Moreover, in such a case,  each other root 
$z_j =\alpha_j+i\beta_j,\ \beta_j \geq0, \ j >2$,  satisfies the inequalities $\alpha_j < z_2$ and $\beta_j > 2\pi/\tau$, see \cite[Theorem 6.1]{mp}. In particular, the steady state $u = \ln p$ is exponentially stable. 
We claim that $u^*(t)$  is eventually 
monotone at $+\infty$ if $P\tau e^{1+\tau} < 1$. Indeed, otherwise, it is easy to see that the sequence $\{t_j\}$ is infinite so that $u^*(t)$ exponentially converges  to the equilibrium  $\ln p$ and  slowly oscillates around it. Then in view of Yulin Cao results   \cite[Theorem 3.4]{Cao}  on super-exponential solutions,  there is   a zero $z_j=\alpha_j+i\beta_j, j \in \N,$ of $\chi_+(z)$ and $C\not=0, \ \delta >0, \ \theta \in \R,$ such that 
$$
u^*(t)-\ln p = Ce^{\alpha_jt}\cos(\beta_j t+\theta) + O(e^{(\alpha_j-\delta) t}), \quad t \to +\infty. 
$$
Now, if $z_j$ is not a real root,  the above representation implies that the distance between large adjacent zeros of  $u^*(t)-\ln p$ is less than $\tau/2$, a contradiction. Hence\begin{equation}\label{j}
u^*(t)=\ln p + C_*e^{z_jt} + O(e^{(z_j-\delta) t}), \quad t \to +\infty,
\end{equation}
where  $j \in \{1,2\}, z_j <0, \ C_*\not=0$. This completes the proof of Lemma \ref{Le9}. 
 \qed
\end{proof}
Importantly, the sequence $\mathcal S$ can be non-empty for certain parameters $p, \tau$. The next result shows that generally $m \geq 1$ in Lemma \ref{Le9}.  It is an open question, however, whether  there exist $p, \tau$  for which  $m \geq 2$. 
\begin{cor} \label{30} In addition to the assumptions of Lemma \ref{Le9}, suppose that  $\zeta>\ln p$ where $\zeta $ was defined in Theorem \ref{T1}.  
Then the solution $u^*(t) $ is eventually monotone at $+\infty$ and   is non-monotone on $\R$. Moreover, $\max_{\R} u^*(t) > \zeta$. 
\end{cor}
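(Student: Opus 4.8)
The eventual monotonicity of $u^*$ at $+\infty$ is already supplied by Lemma \ref{Le9}, which moreover gives $u^*(+\infty)=\ln p$; so the entire corollary reduces to exhibiting a single point at which $u^*$ strictly exceeds $\zeta$. Indeed, once we know $\max_{\R}u^*\ge u^*(\tau)>\zeta>\ln p=u^*(+\infty)$, the function must rise (from $u^*(-\infty)=0$) above its limiting value and then return to it, so it cannot be monotone on $\R$; this settles the non-monotonicity and the bound $\max_{\R}u^*>\zeta$ simultaneously. The plan is therefore to prove the pointwise estimate $u^*(\tau)>\zeta$.

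For this I would use the variation-of-constants form of \eqref{defn}. From $(e^tu^*(t))'=e^tf(u^*(t-\tau))$, integration over $[0,\tau]$ gives
\[
u^*(\tau)=e^{-\tau}u^*(0)+\int_0^\tau e^{-(\tau-s)}f(u^*(s-\tau))\,ds=e^{-\tau}u^*(0)+\int_{-\tau}^0 e^{r}f(u^*(r))\,dr,
\]
after the substitution $r=s-\tau$. On the window $r\in[-\tau,0]$ the two-sided estimate \eqref{ast}, namely $u_2(r)<u^*(r)<u_1(r)=e^{\mu r}\le 1$, is at our disposal, and the key idea is to use it in both directions at once: writing $f(u^*(r))=p\,u^*(r)e^{-u^*(r)}$, I bound the linear factor below by $u^*(r)>u_2(r)$ and the exponential factor below by $u^*(r)<e^{\mu r}$. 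This produces $f(u^*(r))>p\,u_2(r)\,e^{-e^{\mu r}}$ on $[-\tau,0]$, an inequality that holds irrespective of the sign of $u_2$ (it is trivial wherever $u_2\le0$, since there its right-hand side is negative, while both factors are positive wherever $u_2>0$). Likewise $u^*(0)>u_2(0)=1+\bar q_2$ takes care of the boundary term.

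It then remains to evaluate the resulting lower bound. The substitution $t=e^{\mu r}$ (so that $e^{r}=t^{m}$, $dr=(m/t)\,dt$, $u_2(r)=t+\bar q_2t^2$, and the endpoints become $e^{-\mu\tau}$ and $1$) turns $\int_{-\tau}^0 e^{r}\,p\,u_2(r)\,e^{-e^{\mu r}}\,dr$ into $pm\int_{e^{-\mu\tau}}^1(t^m+\bar q_2t^{m+1})e^{-t}\,dt$; invoking the identity $\Gamma(1,s)-\Gamma(e^{-\mu\tau},s)=\int_{e^{-\mu\tau}}^1 t^{s-1}e^{-t}\,dt$ with $s=m+1$ and $s=m+2$ reproduces exactly the incomplete-gamma bracket in the definition of $\zeta$. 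Combined with the boundary contribution $(1+\bar q_2)e^{-\tau}$, this gives $u^*(\tau)>\zeta$, which is what was needed.

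I expect no deep difficulty here: the argument is elementary once one recognizes the correct test point $t=\tau$ and the ``dual'' use of \eqref{ast}. The only real work is the bookkeeping in the final change of variables --- checking that the powers $t^m$, $t^{m+1}$, the factor $pm$, and the limits $e^{-\mu\tau},1$ come out precisely as in $\zeta$ --- together with the care needed to keep every inequality strict. The one genuinely conceptual point is that the estimate must simultaneously lower-bound the factor $p\,u^*$ and the factor $e^{-u^*}$ in $f$; this is what makes the crude bound sharp enough to break the level $\ln p$, and hence what is responsible for the non-monotone, non-oscillating shape of the profile.
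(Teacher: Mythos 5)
Your proof is correct and takes essentially the same route as the paper's: both use the variation-of-constants formula on $[0,\tau]$ together with the two-sided bounds (\ref{ast}) to show $u^*(\tau)>\zeta$, with the change of variables $t=e^{\mu r}$ reproducing the incomplete-gamma expression for $\zeta$ exactly. The only cosmetic difference is that you lower-bound the two factors of $f(u^*)=pu^*e^{-u^*}$ in one step (which, as you note, also makes the sign of $u_2$ irrelevant), whereas the paper first invokes the monotonicity of $f$ on $[0,1]$ to pass from $f(u^*)$ to $f(u_2)$ (relying on $u_2>0$ via Corollary \ref{Co8}) and then replaces $e^{-u_2}$ by $e^{-u_1}$ --- both chains produce the identical integrand $p\,u_2e^{-u_1}$.
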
 
\begin{proof} Indeed, after using the variation of constant formula  on $[0,\tau]$ and  monotonicity of $f(u)$ on the interval $[0,1]$, we find that 
$$
u^*(\tau) = u^*(0)e^{-\tau} +\int_0^\tau e^{s-\tau}f(u^*(s-\tau))ds > u_2(0)e^{-\tau} +\int_0^\tau e^{s-\tau}f(u_2(s-\tau))ds > 
$$
$$
u_2(0)e^{-\tau} +p\int_0^\tau e^{s-\tau}u_2(s-\tau)e^{-u_1(s-\tau)}ds = 
$$
$$
(1+\bar q_2)e^{-\tau} +p\int_{-\tau}^0 {\color{blue}e^{\mu s} }e^{s}(1+ \bar q_2 e^{\mu s})\exp(-e^{\mu s})ds = \zeta. 
$$
Finally, recall that $\zeta > \ln p$ for the parameters $p=365,$ $\tau =0.07$, see Example \ref{Ex1}. 
\qed
\end{proof}
\subsection{Proof of Theorem \ref{T1}}\label{S22}
The key relation between $u^*(t)$ and  $\phi(t,\epsilon)$  is given in the next assertion. 
\begin{proposition} \label{P1} Let all the conditions of Corollary \ref{30} be satisfied. Then  $\phi(+\infty,\epsilon) = \ln p$ for all sufficiently small $\epsilon >0$. Moreover, $\lim_{\epsilon \to 0^+} \phi(t,\epsilon) = u^*(t)$ uniformly on $\R$
(possibly,  after an appropriate translation of the wavefronts). In particular, the wave profiles $u= \phi(t,\epsilon)$ are not monotone for all small $\epsilon >0$. 
\end{proposition}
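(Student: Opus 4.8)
The plan is to read Proposition \ref{P1} as a singular perturbation statement: as $\epsilon\to 0^+$ the second-order profile equation (\ref{def}) formally degenerates into the first-order delay equation $\phi'=-\phi+f(\phi(\cdot-\tau))$, i.e. into (\ref{defn}), whose unique (up to translation) positive heteroclinic with $\phi(-\infty)=0$ is precisely $u^*(t)$ by Theorem \ref{T3}. I would therefore show that the family $\{\phi(\cdot,\epsilon)\}$ is precompact in $C_{\mathrm{loc}}^1(\R)$, that every limit point solves the reduced equation, and that it must coincide with $u^*$; since the limit is then subsequence-independent, the whole family converges.

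First I would remove the translation ambiguity by normalizing each profile at a single level on its increasing leading edge: fix $\theta_0\in(0,1)$ and impose $\phi(0,\epsilon)=\theta_0=u^*(0)$. This is legitimate because the leading edge of $\phi(\cdot,\epsilon)$ is strictly increasing until it first meets $\ln p$ (by \cite[Theorem 1]{TTT}), while $(u^*)'>0$ on $(-\infty,M)$ with $u^*(M)\ge\ln p$ by Lemma \ref{Le7}; hence the level $\theta_0<1<\ln p$ is attained on the monotone part of both profiles. Next I would establish a priori bounds uniform in $\epsilon\in(0,\epsilon_0]$: the monostable structure and the boundedness of $f$ give $0<\phi(\cdot,\epsilon)\le K$ with $K$ independent of $\epsilon$, and rewriting (\ref{def}) as $\epsilon\phi''-\phi'=\phi-f(\phi(\cdot-\tau))$ and applying the variation-of-constants representation associated with the roots $0$ and $1/\epsilon$ of $\epsilon\lambda^2-\lambda$ (the uniform estimates of the singular perturbation theory in \cite{fhw,FT}) yields uniform bounds on $\phi'$ and $\phi''$. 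By Arzel\`a--Ascoli, any sequence $\epsilon_n\to 0$ admits a subsequence with $\phi(\cdot,\epsilon_n)\to\psi$ in $C^1_{\mathrm{loc}}(\R)$; since $\phi''$ is uniformly bounded, $\epsilon_n\phi''(\cdot,\epsilon_n)\to 0$ locally uniformly, so $\psi$ solves the reduced equation (\ref{defn}). The normalization forces $\psi(0)=\theta_0\in(0,\ln p)$, so $\psi\not\equiv 0$, and the uniform exponential smallness of the left tail (the leading edge decays like $e^{\mu t}$, $\mu$ being the positive root of $\chi$) gives $\psi(-\infty)=0$; together with $\liminf\psi>0$ and the uniqueness part of Theorem \ref{T3}, this identifies $\psi=u^*$. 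Hence $\phi(\cdot,\epsilon)\to u^*$ in $C_{\mathrm{loc}}$.

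The hard part will be the right tail: upgrading $C_{\mathrm{loc}}$ convergence to uniform convergence on all of $\R$ and, simultaneously, proving $\phi(+\infty,\epsilon)=\ln p$. The difficulty is genuinely singular, since the small parameter multiplies the highest derivative, so the spectral picture at the equilibrium $\ln p$ for (\ref{def}) --- governed by $\chi_+(z,\epsilon)=\epsilon z^2-z-1-Pe^{-z\tau}$ --- does not degenerate uniformly onto that of the reduced equation. Under the standing hypothesis $P\tau e^{1+\tau}<1$ of Corollary \ref{30}, $u^*$ is eventually monotone at $+\infty$ and $\ln p$ is exponentially stable for the reduced flow; I would exploit that for large fixed $T$ the value $\phi(T,\epsilon)$ is forced close to $u^*(T)\approx\ln p$ on the monotone tail, and then use the existence/oscillation dichotomy of Proposition \ref{main} together with the spectral analysis of $\chi_+(z,\epsilon)$ (which retains negative real roots for all small $\epsilon$) to trap the profile near $\ln p$ and exclude persistent oscillation. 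This delivers $\phi(+\infty,\epsilon)=\ln p$ and controls the tail uniformly, completing the uniform convergence on $\R$.

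Finally, the non-monotonicity is immediate: by Corollary \ref{30} one has $\max_{\R}u^*>\zeta>\ln p$, so the uniform convergence gives $\max_{\R}\phi(\cdot,\epsilon)>\ln p$ for all sufficiently small $\epsilon$, while $\phi(+\infty,\epsilon)=\ln p$; hence $\phi(\cdot,\epsilon)$ must decrease somewhere and therefore cannot be monotone.
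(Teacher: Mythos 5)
Your overall architecture (normalize on the leading edge, get $\epsilon$-uniform a priori bounds, extract a $C^1_{\mathrm{loc}}$ limit by Arzel\`a--Ascoli, identify it with $u^*$ through the uniqueness part of Theorem \ref{T3}) is a legitimate alternative to the paper's proof, which is essentially a citation: the statement is exactly the Faria--Trofimchuk singular perturbation theorem, i.e.\ \cite[Theorem 1]{FT} when the zero equilibrium of (\ref{NB}) is hyperbolic and \cite[Theorem 3.8]{FTnl} otherwise, combined with the uniqueness of semi-wavefronts \cite{AGT} to identify the front constructed there with $\phi(\cdot,\epsilon)$. The left half of your scheme is sound modulo routine repairs (for instance, $\phi''=(\phi'+g)/\epsilon$ with $g=\phi-f(\phi(\cdot-\tau))$ is not \emph{obviously} uniformly bounded; you need the bootstrap $\phi'(t)=-\int_0^\infty e^{-u}g(t+\epsilon u)\,du$, which gives $|\phi'(t)+g(t)|\le L_g\epsilon$ with $L_g$ the uniform Lipschitz constant of $g$, and hence $|\phi''|\le L_g$; also the non-hyperbolicity of the zero equilibrium, which forces the paper to split into two cases, is harmless for your compactness route and deserves a remark).

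However, the step you yourself call ``the hard part'' is left as a wish, and the tool you propose for it cannot do the job; this is a genuine gap. First, for $\phi(+\infty,\epsilon)=\ln p$ a patch exists but you do not give it: under $P\tau e^{1+\tau}<1$ (in force by Corollary \ref{30} via Lemma \ref{Le77}) one has $(\tau,c)\in\mathcal{D}_m$ for all large $c$, hence $(\tau,c)\in\mathcal{D}_s$ by (\ref{ms})/Lemma \ref{CK}, so hypothesis (\ref{ce}) of Proposition \ref{main} holds and, together with uniqueness of semi-wavefronts, yields $\phi(+\infty,\epsilon)=\ln p$. Second, and more seriously, the genuinely missing piece is the \emph{uniform-in-$\epsilon$} right-tail estimate $|\phi(t,\epsilon)-\ln p|\le\delta$ for $t\ge T(\delta)$, $\epsilon\le\epsilon_0(\delta)$: pointwise convergence $\phi(+\infty,\epsilon)=\ln p$ for each fixed $\epsilon$ plus $C_{\mathrm{loc}}$ convergence does not upgrade to uniform convergence on $\R$. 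Your proposed mechanism --- spectral analysis of $\chi_+(z,\epsilon)$ to ``exclude persistent oscillation'' --- is both insufficient and misdirected: negative real zeros of $\chi_+(\cdot,\epsilon)$ do \emph{not} exclude slowly oscillating profiles (that exclusion is precisely the content of the later Corollary \ref{28}, which requires the new super-exponential Lemma \ref{so} and is not available, nor used, in the paper's proof of Proposition \ref{P1}), and oscillation exclusion is neither necessary nor sufficient for the tail bound, since small-amplitude slow oscillations inside a $\delta$-tube around $\ln p$ are perfectly compatible with uniform convergence. What is actually required is a uniform trapping/attractivity estimate near $\ln p$ for the whole family of second-order profile equations; this is delicate because $\chi_+(\cdot,\epsilon)$ carries the large unstable root $\nu\sim\epsilon^{-1}$, so the unstable mode must be eliminated using boundedness of the profile through a bilateral variation-of-constants formula of type (\ref{L42}) (cf.\ \cite[Lemma 4.2]{TT}), with constants controlled uniformly in $\epsilon$. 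This is exactly the analytic content supplied by \cite[Theorem 1]{FT} and \cite[Theorem 3.8]{FTnl}, where the front is constructed by a fixed-point scheme directly in the sup-norm on $\R$; your sketch replaces it with the word ``trap''. Your concluding deduction of non-monotonicity from $\max_{\R}u^*\ge\zeta>\ln p$ is fine once uniform convergence is in hand.
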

\begin{proof} First, suppose that the characteristic equation $\chi(z)=0$ does not have roots on the imaginary axis. 
Then, in view of the uniqueness of semi-wavefronts in the Nicholson's blowflies diffusive equation, the statement concerning the uniform convergence  is a direct consequence  of \cite[Theorem 1]{FT}. 
Now, if the zero equilibrium of equation (\ref{NB}) is not hyperbolic, the same conclusion follows from  Theorem 3.8 in \cite{FTnl}.  
\qed 
\end{proof}
To complete the proof of Theorem \ref{T1},  we have to extend the eventual monotonicity property of $\phi(t,0) = u^*(t)$ established in Lemma \ref{Le9} on the profiles $\phi(t,\epsilon)$ for all sufficiently small $\epsilon > 0$.  Observe that, taking into account Proposition \ref{P1} and using  \cite[Theorem 2.1]{mps} we obtain (see \cite[pp. 2321-2322]{TTT} for more detail) that $\phi(t,\epsilon)$ is either eventually monotone  or  slowly oscillating around $\ln p$ at $+\infty$ in the following sense: 
\begin{definition} \label{d2s} Set $\mathbb{K} = [-\tau,0] \cup \{1\}$. For any
$v \in C(\mathbb{K})\setminus\{0\}$ we define the number of sign
changes by $$\hspace{-1mm} {\rm sc}(v) = \sup\{k \geq 1:{\rm  there
\ are \ } t_0 <
 \dots < t_k, \ t_j \in \mathbb{K},   \ {\rm such \ that\ }
v(t_{i-1})v(t_{i}) <0 {\rm \ for \ }  i\geq 1\}. $$ We set ${\rm
sc}(v) =0$ if $v(s) \geq 0$ or  $v(s) \leq 0$ for $s \in
\mathbb{K}$.  Next, for a smooth function $\psi: [T_0, +\infty) \to \R$ and a real number $\kappa$, we will write
$(\overline{\psi}_t)(s) =
\psi(t+s)- \kappa$ if $s \in [-\tau,0]$, and $(\overline{\psi}_t)(1) =
\psi'(t)$. We will say that $\psi(t)$  is slowly oscillating around $\kappa$ on a connected interval $\mathfrak{J}\subset  [T_0+\tau, +\infty) $
 if the following conditions are satisfied: 
 
 \vspace{2mm}
 
 (d1) $\psi(t)$ oscillates around $\kappa$ and

 \vspace{2mm}

 (d2) for each $t \in \mathfrak{J}$, it holds  that either sc$(\overline{
\psi}_t)=1$ or sc$(\overline{\psi}_t)=2$.
\end{definition}
Therefore our immediate goal is to demonstrate that $\phi(t,\epsilon)$ is not a slowly oscillating solution of equation (\ref{def}) whenever all the conditions of Corollary \ref{30} are satisfied.  
We have already solved a similar problem, proving eventual monotonicity of $u^*(t)$ in Lemma \ref{Le9}. 
However,  trying to argue as at the end of the proof of Lemma \ref{Le9}, we will regret  the lack of an analog of the Mallet-Paret--Cao--Arino theory \cite{OA,Cao,mp} of {\sl super-exponential solutions} (i.e. solutions converging to their finite limits at $\infty$ faster than any exponential) for the second order delay differential  equations. The key idea of this theory was concisely described by 
Ovide Arino:  "{\it Surprisingly, the proof $[\dots]$ is based on quite an elementary although very neat observation, which is, essentially,  if you assume a solution has a rapid decay from $t$ to $t+\tau$, it indicates rapid oscillations before $t$"}, see  \cite[p. 178]{OA}. 
 In our next lemma, we  contribute to the analysis of super-exponential solutions by developing this idea for   second order delay differential  equations (at a level of generality sufficient for our purposes).
\begin{lem} \label{so} Let $y(t), \ y(+\infty)=0,$ slowly oscillate around $0$ and satisfy equation 
\begin{equation}\label{f}
y''(t)-Ay'(t)-By(t) - C(t)y(t-\tau) =0, \quad t \in \R,  
\end{equation}
where $A,B >0$ and continuous function $C(t)$ converges to a positive number $C$ at $+\infty$.
Suppose also  that the set of zeros of $y'(t)$ does not contain a non-degenerate interval. Then $y(t)$ is not a super-exponentially decaying (i.e. small) solution.  
\end{lem}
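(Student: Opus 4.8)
The plan is to argue by contradiction: assume that the nonzero solution $y$ is simultaneously slowly oscillating around $0$ (Definition \ref{d2s} with $\kappa=0$) and super-exponentially small at $+\infty$, and deduce incompatible information about the zeros of $y$ near $+\infty$. Slow oscillation forces $y$ to possess infinitely many zeros $s_1<s_2<\dots\to+\infty$, to be sign-definite between consecutive zeros, and to undergo at most two sign changes on any window of length $\tau$, so that its oscillation rate stays bounded. Super-exponential smallness means $e^{\lambda t}y(t)\to0$ as $t\to+\infty$ for every $\lambda>0$; since this property is invariant under $t\mapsto t-\tau$, the delayed copy $y(\cdot-\tau)$ is super-exponentially small as well, and then (\ref{f}) together with $y(+\infty)=y'(+\infty)=0$ forces $y'$ and $y''$ to be super-exponentially small too. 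The hypothesis that the zero set of $y'$ contains no interval will be used to ensure that the sign changes we produce are genuine and the zeros isolated.

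First I would exploit the constant-coefficient principal part. As $A,B>0$, the polynomial $\lambda^2-A\lambda-B$ has two real roots $\lambda_-<0<\lambda_+$ with $\lambda_-+\lambda_+=A$, and $y''-Ay'-By=(D-\lambda_-)(D-\lambda_+)y$ with $D=d/dt$. Setting $v:=y'-\lambda_+y$ turns (\ref{f}) into the pair $v'-\lambda_-v=C(t)y(t-\tau)$ and $y'-\lambda_+y=v$. Solving the stable equation ($\lambda_-<0$) for $v$ and the unstable one ($\lambda_+>0$) for $y$ in their decaying regimes --- legitimate since $y,v\to0$ --- yields, for all large $t$, the representation
\begin{equation}\label{rep-so}
y(t)=\int_t^{\infty}\!\!\int_s^{\infty}e^{\lambda_+(t-s)}\,e^{\lambda_-(s-r)}\,C(r)\,y(r-\tau)\,dr\,ds,
\end{equation}
in which $y$ is the image of its own delayed values under an integral operator with a strictly positive kernel. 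This stable/unstable splitting is the device that replaces the single decaying direction available in the scalar first-order theory.

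The engine of the proof is \emph{Arino's observation} in integrated form. Multiplying (\ref{f}) by the adjoint homogeneous solutions $e^{-\lambda s}$, $\lambda\in\{\lambda_-,\lambda_+\}$, and integrating by parts twice on $[t,t+\tau]$, the principal part contributes only boundary terms, giving
\begin{equation}\label{mom-so}
\int_{t}^{t+\tau}e^{-\lambda s}C(s)\,y(s-\tau)\,ds=\Big[e^{-\lambda s}\big(y'(s)-(A-\lambda)y(s)\big)\Big]_{t}^{t+\tau},\qquad \lambda\in\{\lambda_-,\lambda_+\}.
\end{equation}
After the substitution $\sigma=s-\tau$, the left-hand side is the restriction $y|_{[t-\tau,t]}$ paired with the two strictly positive weights $e^{-\lambda(\sigma+\tau)}C(\sigma+\tau)$, while the right-hand side is assembled from $y,y'$ at the endpoints. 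The contributions at the far endpoint $t+\tau$ are super-exponentially negligible; evaluating the identities along the zeros $t=s_k$ removes the $y(s_k)$ term, so the two relations couple the slopes $y'(s_k)$, through the distinct exponents $\lambda_\pm$, to quantities that decay super-exponentially. Because a fixed-sign function, or one with only one or two sign changes per window, cannot be nearly annihilated by two independent positive weights once the relative error is small enough, the compatibility of (\ref{mom-so}) with super-exponential decay would force $y$ to change sign more and more often as $t\to+\infty$, contradicting the bound of two sign changes per $\tau$-window imposed by slow oscillation.

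The main obstacle is precisely the quantitative step: upgrading ``super-small moments force extra sign changes'' into a lower bound on the number of sign changes per window that diverges as $t\to+\infty$. In the first-order theory this divergence is delivered for free by the monotonicity of the discrete (sign-change) Lyapunov functional, which has no ready analogue here. To replace it I would either (i) iterate the positive representation (\ref{rep-so}), using that a positivity-improving operator cannot reproduce, through the delay, a boundedly oscillating nonzero function, or (ii) enlarge (\ref{mom-so}) to a full family of moment identities (testing against $e^{-\lambda s}$ and polynomial corrections) and invoke a Sturm- or Descartes-type sign count. Coordinating the unstable exponent $\lambda_+>0$ with the stable one $\lambda_-<0$ while keeping the non-autonomous factor $C(t)\to C>0$ under control is the genuinely second-order difficulty with no counterpart in the Mallet-Paret--Cao--Arino setting, which is exactly why a direct appeal to that theory is unavailable.
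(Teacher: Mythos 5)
Your setup is sound as far as it goes --- the contradiction frame, the factorization of the principal part into $\lambda_-<0<\lambda_+$, the double-integral representation with positive kernel, and the two identities obtained by testing (\ref{f}) against $e^{-\lambda_\pm s}$ are all correct computations --- but the proof stops essentially where the real work begins, and one intermediate claim is wrong as stated. First, the assertion that the boundary contributions at the far endpoint $t+\tau$ are ``super-exponentially negligible'' confuses absolute with relative size: every quantity in sight is super-exponentially small, and what your argument needs is smallness \emph{relative to} $\max_{[t-\tau,t]}|y|$. Super-exponential decay does \emph{not} give $\|y_{t+\tau}\|/\|y_{t}\|\to 0$ for all $t$ --- if that ratio were bounded below by $\delta>0$ the decay would be at worst exponential of rate $\tau^{-1}\ln(1/\delta)$, so one only gets the ratio tending to $0$ \emph{along a subsequence}. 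This is exactly where the paper's proof starts: it extracts $t_j$ with $\|y_{t_j+2\tau}\|/\|y_{t_j+\tau}\|\to 0$ and studies the normalized translates $z_j(t)=y(t_j+t)/\|y_{t_j+\tau}\|$. Second, and more seriously, your decisive step --- that near-vanishing of the two moments forces ever more sign changes per $\tau$-window --- is asserted, not proved, and you say so yourself. With only two admissible test functions it cannot work as sketched: the adjoint kernel of the constant-coefficient principal part is exactly $\mathrm{span}\{e^{\lambda_- s},e^{\lambda_+ s}\}$ (your ``polynomial corrections'' would reintroduce non-boundary terms since $\lambda_\pm$ are simple), and a Descartes-type count from two near-annihilated positive moments forces at most two sign changes in the window --- which is perfectly compatible with slow oscillation, where $\mathrm{sc}$ may equal $2$. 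So route (ii) cannot close the argument, and route (i) is left entirely programmatic.

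For comparison, the paper supplies precisely the missing engine by a compactness/blow-up argument rather than moment identities. It first substitutes $y=e^{rt}z$ with $r$ the negative root of $r^2-Ar-B=0$, reducing to $z''-az'-c(t)z(t-\tau)=0$ with $c(+\infty)>0$ (a cousin of your stable/unstable splitting, but it kills $B$ instead of producing a system). It then bounds the oscillation complexity of the normalized translates pointwise from the equation: at an interior extremum $r_0$ of $z_j'$ one has $z_j''(r_0)=0$, hence $\mathrm{sign}\,z_j(r_0-\tau)=-\mathrm{sign}\,z_j'(r_0)$, giving at most three maximal complete monotonicity intervals per $\tau$-window. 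Helly and Riesz selection plus the integrated first-order identity for $z_j'$ show the blow-up limit vanishes a.e.\ on $[0,\tau]$ and that the sequence is controlled on $[-\tau,0]$; since the normalization keeps $z_j(\xi_j)=1$ alive at some $\xi_*$, the equation then manufactures four points exhibiting $\mathrm{sc}>2$, contradicting Definition \ref{d2s}. That quantitative conversion of ``rapid decay across one window'' into ``more than two sign changes in an earlier window'' is exactly the step your plan flags as the main obstacle and does not carry out; as submitted, the proposal is an incomplete program, not a proof.
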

\begin{proof}  Arguing by contradiction, suppose that equation (\ref{f}) has a slowly oscillating small solution $y(t)$. After realizing the change of variables $y(t) = e^{rt}z(t)$, where $r$ is a negative root of equation $r^2-A r -B =0$, 
we obtain the delay differential equation for $z$ of the same type  as  (\ref{f}) (i.e. $a>0$, $c(+\infty)>0$) except that now $B=0$: 
\begin{equation}\label{red}
y''(t)-ay'(t)- c(t)y(t-\tau) =0, \quad t \in \R.
\end{equation}
Obviously, $z(t) = e^{-rt}y(t)$ is super-exponentially  decaying at $+\infty$.  It is easy to check that, being a small solution, $z(t) = e^{-rt}y(t)$ with $r<0$ shares  the slow oscillation property of $y(t)$. We will arrive to a contradiction by supposing  the existence of a slowly oscillating small solution $y(t)$ of  equation (\ref{red}).

Hence, arguing by contradiction, suppose that equation (\ref{red}) has a slowly oscillating small solution $y(t)$.  In our subsequent argumentation, we will use the notation $y_t(s)=y(t+s), \ s \in [-\tau,0]$ so that $y_t\in C[-\tau,0]$ and $\|y_t\|= \max\{|y(t+s)|, s \in [-\tau,0]\}$. It is easy to see that the `smallness' of $y(t)$ implies the  existence of 
an increasing sequence $\{t_j\}$, $t_j \to +\infty$, such that  $\|y_{t_j+2\tau}\|/\|y_{t_j+\tau}\| \to 0$ as $j\to +\infty$ (since $y(t)$ oscillates, we have that $y_t\not= 0$ for each $t$).

Set $z_j(t)= y(t_j+t)/\|y_{t_j+\tau}\|$, clearly $|z_j(t)| \leq 1$ for $t \in [0,\tau]$ and $|z_j(\xi_j)| =1$ for some $\xi_j \in [0,\tau)$. Without loss of generality, we can further assume that 
 $z_j(\xi_j) =1$ and that $\xi_j \to \xi_*$ for some $\xi_*\in [0,\tau]$.  Due to our choice of $\{t_j\}$, we also find  that $z_j(t) \to 0$ uniformly on the interval $[\tau,2\tau]$. In addition, $z_j(t)$ is a slowly oscillating small solution of  the differential equation 
\begin{equation}\label{redj}
z''(t)-az'(t)- c_j(t)z(t-\tau) =0, \ \  t \in \R, \ \ \mbox{where} \ c_j(t) := c(t+t_j) \to c_*: =Ce^{-r\tau}>0. 
\end{equation}
We will say that an interval $(p,q)$ is a  maximal complete interval of monotonicity for some $z_j$ if $z_j(t)$ is monotone on $(p,q)$, $z_j'(p)=z_j'(q)=0$ and $(p,q)$ is not properly 
contained in a larger interval with the same properties. Then there exists some $r_0 \in (p,q)$ such that $0\not =|z_j'(r_0))| =\max\{|z'_j(s)|, s \in (p,q)\}$ and $z''_j(r_0)=0$. Thus 
$$\mbox{sign}\,z_j(r_0-\tau) = \mbox{sign}\,(-az_j'(r_0)/c_j(r_0)) = -\mbox{sign}\,z_j'(r_0)
$$ so that $z_j(t)$ can have at most 3 maximal complete intervals of monotonicity on each interval of  length $\tau$ (consequently, at most 5 intervals of monotonicity). As a consequence, we can find a subsequence $\{z_{j_i}(t)\}$ of $\{z_j(t)\}$  such that that all $z_{j_i}(t)$ have exactly the same number 
of intervals of monotonicity $(p^{(j_i)}_k,q^{(j_i)}_k)$ inside of each of  the segments $[-\tau,0], [0,\tau], [\tau, 2\tau]$, while  the sequences $p^{(j_i)}_k, q^{(j_i)}_k$ are converging to their respective limits 
$p_k, q_k$. Without loss of generality, we can also assume that every $z_{j_i}(t)$ does not change its sign on each of the intervals $(p^{(j_i)}_k,q^{(j_i)}_k)$. 
Furthermore, due tho the Helly selection theorem \cite[p. 250]{BZ}, we can assume that there exists a piece-wise monotone function $z_*(t), \ t \in [0,2\tau],$ such that $z_*(t)=0, \ t \in [\tau,2\tau]$ and  $z_{j_i}(t) \to z_*(t)$ pointwise on $[0,2\tau]$.   In the sequel, to simplify the notation, we will {denote by $z_{j}(t)$ also any subsequence of $\{z_{j}(t)\}$}.

We claim that $z_*(t)=0$ almost everywhere on $[0,\tau]$. First, observe that the sequence of the total variations $\int_{\tau}^{2\tau}|z_j'(s)|ds$ of the smooth functions $z_j(t)$ on $[\tau, 2\tau]$ converges to 0.  Then, by the Riesz theorem \cite[p. 79]{BZ}, we can extract a subsequence $z'_{j}(t)$ such that  $z_{j}'(t) \to 0$ almost everywhere  on $[\tau,2\tau]$. %(again, in what follows, we will omit the second index $i$ in the subsequence). 
Take some $s_1<s_2, s_k \in [\tau, 2\tau]$, $k=1, 2$, such that 
$\lim_{j \to \infty} z_j'(s_k) = 0$. By integrating (\ref{redj}), we find that 
\begin{equation}\label{vc}
z'_j(s_2) = e^{a(s_2-s_1)}z_j'(s_1) + \int_{s_1}^{s_2}e^{a(s_2-s)}c_j(s)z_j(s-\tau)ds. 
\end{equation}
Passing to the limit  $j \to +\infty$ in (\ref{vc}), 
we find that, for almost all $s_1<s_2,  s_k \in [\tau, 2\tau]$, $k=1,2,$
$$
\int_{s_1}^{s_2}e^{a(s_2-s)}z_*(s-\tau)ds =0,
$$ 
which proves our claim. 

Now, fix some non-empty monotonicity  interval $(p_k,q_k)\subset [0,\tau]$ mentioned before.   Due to the claim of the previous paragraph, we know that there are $r_1, r_2 \in (p_k, q_k)$ such that 
 $\lim_{j \to \infty} z_j(r_k) = 0$. Then we find immediately that $\int_{r_1}^{r_2}|z'_j(t)|dt \to 0$ and $z_j(t) \to 0$ uniformly on $[r_1,r_2]$. 
Arguing now as in the previous paragraph and passing to subsequences if necessary, we  can conclude that $z_j'(t) \to 0$ almost everywhere on $(p_k,q_k)$.  Hence, we have 
proved that there exists a finite set of points $\frak{R}={\{p_k, q_j: \ p_k, q_j \in [0,\tau]\} }$ such that $z_j(t)\to 0$ uniformly on each closed interval ${\mathcal I} \subset
[0,\tau] \setminus \frak{R}$. Consequently,  $z_j'(t) \to 0$ almost everywhere on $[0, \tau]$ and $\xi_* \in \frak{R}$. 

In the next stage, we will analyze the sequence $\{z_j(t)\}$ on the interval $[-\tau,0]$. Set $\frak{R}_1= \{p_k,q_j:\ p_k, q_j \in [-\tau,0]\}$  and consider some  closed interval $[\alpha,\beta] \subset
[-\tau,0] \setminus \frak{R}_1$.  For all sufficiently large $j$,  the functions $z_j(t)$ have the same type of positivity and monotonicity  on $[\alpha,\beta]$. We claim that we can choose a subsequence in such a way that 
$z_{j}(t) \to 0$  uniformly on $[\alpha,\beta]$. To be more specific,  suppose, for example,  that $z_j(t)$
are non-negative and decreasing on $[\alpha, \beta]$.  If some subsequence of $\{z_j(s)\}$ converges to $+\infty$ for  $s \in (\alpha, \beta)$,  then clearly 
$z_{j}(t) \to +\infty$ uniformly on  $[\alpha, s]$ while $z_{j}'(t) \to 0$  almost everywhere on $[\alpha+ \tau, s +\tau]$. But then, after taking  some $s_1<s_2, s_k \in [\alpha+ \tau, s +\tau]$, $k=1,2$, such that 
$\lim_{j \to \infty} z_{j}'(s_k) = 0$ and using (\ref{vc}), 
we immediately get a contradiction. This implies that the sequence $z_{j}(t)$ is bounded for each $t \in (\alpha, \beta)$. In fact, since we can slightly move the endpoints $\alpha, \beta$ of this interval, the sequences  $z_{j}(\alpha), z_j(\beta)$ are also bounded.  This allows us to conclude that $\{z_j(t)\}$ is uniformly bounded on $[\alpha,\beta]$. Consequently, by arguing as above and passing to subsequences if necessary, we find that $z_j(t)\to 0$ uniformly on $[\alpha,\beta]$ and $z_j'(t)\to 0$  almost everywhere on $[\alpha,\beta]$.  
In other words, $z_j(t), z_j'(t)$ have similar convergence properties on the intervals $[0,\tau]$ and $[-\tau,0]$. 

Therefore, in each $\delta$-neighborhood of $\xi_*$, we can find points $t_1 < \xi_* <t_2, \theta_j$ such that $\lim z_j(t_k) = \lim z'_j(t_k)=0$, $k=1,2,$
 $t_1 < \theta_j \leq \xi_j < t_2$  (whenever $j$ is sufficiently large) and  $z'_j(\theta_j) =0$, 
$z''_j(\theta_j) \leq 0$,  $z_j(\theta_j) \geq 1$. Similarly, the  interval 
$[\xi_*-\tau+\delta, \xi_*-\tau+2\delta]$ contains  a point  $t_3> t_2-\tau$ such that 
$\lim z_j(t_3) = \lim z'_j(t_3)=0$.

But then,  if $\delta \in (0,0.5)$,  there are some  points $s_j, S_j$ satisfying 
the inequalities $t_1 < s_j < \theta_j < S_j < t_2$ and the relations $z_j'(s_j)  > 2, z_j''(s_j)  = 0$, $z_j'(S_j)  < -2, z_j''(S_j)  = 0$.  Since $c_j(t_j) \to c_*>0$, we  can also assume 
that $0< c_j(t)  < 2c_*$. 
Consequently, in view of equation (\ref{redj}),
$$
z_j(\theta_j-\tau)  \leq 0, \quad z_j(s_j-\tau) = -a z'(s_j)/c_j(s_j) < -2a/c_j(s_j) < -a/c_* < 0, 
$$
$$
 z_j(S_j-\tau) = -a z'(S_j)/c_j(S_j) >  2a/c_j(S_j) > a/c_* > 0. 
$$
Now, since $\xi_* - \delta - \tau  <  \theta_j -\tau < S_j-\tau < \xi_*+\delta -\tau < t_3< \xi_*+2\delta -\tau$, for  each $j$  big enough we can find points 
$M_j,T_j\in [\theta_j-\tau, t_3]$ such that $M_j< T_j$, $z_j(M_j) = \max\{z_j(s), s \in [\theta_j-\tau, t_3]\}$,   $z_j'(T_j)  < -2a/(3c_*),\  z_j''(T_j)  = 0$. Thus
$
z_j(T_j-\tau) = -az'(T_3)/c_j(T_3) >0. 
$
Finally, consider four points $T_j-\tau < s_j-\tau < M_j < T_j$. Since $z_j(T_j-\tau) >0,$ $z_j(s_j-\tau) <0, $ $ z_j(M_j) >0, z'_j(T_j)<0$, we conclude that sc$((\bar z_j)_T) >2$ so that  $z_j(t)$ does not slowly oscillate  around $0$.  The obtained contradiction completes the proof of Lemma \ref{so}. 
\qed
\end{proof}

\begin{cor} \label{28} Assume all the conditions of Corollary \ref{30}. Then there exists $\epsilon_0>0$ such that the solution $\phi(t,\epsilon)$ is eventually monotone at $+\infty$ for each $\epsilon\in [0,\epsilon_0]. $
\end{cor}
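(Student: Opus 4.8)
The plan is to prove Corollary \ref{28} by combining the dichotomy recalled just before Definition \ref{d2s} with the new super-exponential result, Lemma \ref{so}. By Proposition \ref{P1}, for all sufficiently small $\epsilon>0$ we have $\phi(+\infty,\epsilon)=\ln p$, and the cited consequence of \cite[Theorem 2.1]{mps} (see \cite[pp. 2321--2322]{TTT}) says that each such profile $\phi(\cdot,\epsilon)$ is \emph{either} eventually monotone \emph{or} slowly oscillating around $\ln p$ at $+\infty$ in the sense of Definition \ref{d2s}. Since the case $\epsilon=0$ is already settled by Lemma \ref{Le9}, it suffices to rule out slow oscillation for all small $\epsilon>0$; eventual monotonicity then follows automatically, and $\epsilon_0$ may be taken as any value for which Proposition \ref{P1} and the perturbation analysis below both hold. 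I therefore argue by contradiction, assuming that $\phi(\cdot,\epsilon)$ slowly oscillates around $\ln p$.

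The first technical step is to recast the profile equation \eqref{def} into the form \eqref{f}. Writing $y(t)=\phi(t,\epsilon)-\ln p$, using $f(\ln p)=\ln p$, and representing $f(\ln p+y(t-\tau))-f(\ln p)=h(t)\,y(t-\tau)$ with $h(t)=\int_0^1 f'(\ln p+\sigma\, y(t-\tau))\,d\sigma$, division of \eqref{def} by $\epsilon$ gives
$$
y''(t)-\tfrac{1}{\epsilon}\,y'(t)-\tfrac{1}{\epsilon}\,y(t)+\tfrac{h(t)}{\epsilon}\,y(t-\tau)=0.
$$
This is exactly \eqref{f} with $A=B=1/\epsilon>0$ and $C(t)=-h(t)/\epsilon$; since $y(t-\tau)\to0$ and $f'(\ln p)=1-\ln p=-P$, the coefficient $C(t)$ converges to the positive number $P/\epsilon$. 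Moreover $y(+\infty)=0$ by Proposition \ref{P1}, and the zero set of $y'$ contains no non-degenerate interval because $\phi(\cdot,\epsilon)$ is a nonconstant real analytic solution (an interval of constancy would, through the equation, force $y\equiv 0$ nearby and contradict slow oscillation). Thus all hypotheses of Lemma \ref{so} are fulfilled once $y$ is assumed to oscillate slowly.

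I then split according to the decay rate of $y$. If $y$ decays faster than every exponential, Lemma \ref{so} applies directly and shows that such a super-exponentially small slowly oscillating solution cannot exist, a contradiction. Otherwise $y$ decays at an exponential rate, and the complementary (non-super-exponential) alternative of the asymptotic theory produces a zero $z_j=\alpha_j+i\beta_j$, $\alpha_j<0$, of the characteristic function $\chi_+(z,\epsilon)=\epsilon z^2-z-1-Pe^{-z\tau}$ governing the leading behaviour, $y(t)=C_*e^{\alpha_j t}\cos(\beta_j t+\theta)+O(e^{(\alpha_j-\delta)t})$, exactly as in \eqref{j}. If $z_j$ is real, then $y$ is eventually of one sign, contradicting oscillation; if $z_j$ is genuinely complex, then, reasoning as at the end of the proof of Lemma \ref{Le9}, the distance between adjacent large zeros of $y$ is less than $\tau/2$, so slow oscillation fails. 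Either way we reach a contradiction, and slow oscillation is excluded.

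The crucial point — and the reason Lemma \ref{so} was needed — is the super-exponential case, which has no first-order analog available in the literature. The secondary difficulty is ensuring that, for small $\epsilon>0$, the only admissible complex zeros $z_j$ with $\alpha_j<0$ indeed have $\beta_j>2\pi/\tau$: for $\epsilon=0$ the required geometry of the zeros of $z+1+Pe^{-z\tau}$ (two negative real zeros, and all remaining zeros lying to their left with $\beta_j>2\pi/\tau$) is furnished by \cite[Theorem 6.1]{mp} under $P\tau e^{1+\tau}<1$, and I would extend it to small $\epsilon>0$ by a perturbation and root-counting argument, verifying that the single new zero created by the term $\epsilon z^2$ sits near $z\approx 1/\epsilon>0$ and is hence irrelevant to the decay at $+\infty$, while the finitely many zeros with negative real part perturb continuously and retain $\beta_j>2\pi/\tau$. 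Once these two points are in place, slow oscillation is impossible for every sufficiently small $\epsilon>0$; the dichotomy then forces eventual monotonicity, and together with Lemma \ref{Le9} this produces the required $\epsilon_0>0$ with $\phi(\cdot,\epsilon)$ eventually monotone for all $\epsilon\in[0,\epsilon_0]$.
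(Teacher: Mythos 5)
Your skeleton is the same as the paper's: invoke Proposition \ref{P1} and the \cite{mps}/\cite{TTT} dichotomy to reduce everything to excluding slow oscillation, rewrite the profile equation as (\ref{f}) with $A=B=\epsilon^{-1}$ and $C(+\infty)=\epsilon^{-1}P>0$, kill the super-exponential branch with Lemma \ref{so}, and in the remaining branch use the Cao-type representation together with a lower bound $\beta_j(\epsilon)\geq 2\pi/\tau$ on the imaginary parts of complex characteristic zeros to contradict slow oscillation. Up to that last bound, your argument matches the paper step for step (your computation of $C(t)$ and its limit is correct, and your verification of the non-degenerate-interval hypothesis of Lemma \ref{so} is a reasonable addition the paper glosses over).

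The genuine gap is the final spectral step. Your plan to transfer the root geometry of $z+1+Pe^{-z\tau}$ from \cite[Theorem 6.1]{mp} at $\epsilon=0$ to small $\epsilon>0$ ``by a perturbation and root-counting argument,'' asserting that ``the finitely many zeros with negative real part perturb continuously and retain $\beta_j>2\pi/\tau$,'' would fail as stated: $\chi_+(\cdot,\epsilon)$ is a quasi-polynomial with \emph{infinitely} many zeros of negative real part (a sequence with $\Re z\to-\infty$ and $|\Im z|\to\infty$), and the perturbation $\epsilon z^2$ is singular, so continuity of individual roots as $\epsilon\to 0^+$ gives no uniform control of the whole spectrum. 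This matters because the Cao-type leading zero in the representation is only known to satisfy $\alpha_j(\epsilon)<z_2(\epsilon)$ --- it may a priori be any complex zero, however far to the left --- so you need $\Im z\geq 2\pi/\tau$ for \emph{all} complex zeros of $\chi_+(\cdot,\epsilon)$ at each fixed small $\epsilon>0$, not merely for finitely many continuously perturbed ones. The paper supplies exactly this non-perturbatively: \cite[Lemma 1.1]{GTLMS} and \cite[Lemma 2.1]{TT} show that, for $P\tau e^{1+\tau}<1$ and $\epsilon\in(0,\epsilon_1]$, the half-plane $\Re z>z_2-2\delta$ contains exactly three roots, all real ($z_1(\epsilon),z_2(\epsilon)$ near $z_1,z_2$, and $z_0(\epsilon)\to+\infty$, which is your ``new root near $1/\epsilon$'') --- this also delivers the hyperbolicity and the exponential decay of $y$ that you use implicitly before invoking the representation --- and \cite[Lemma 21]{TTT} gives $\beta_j(\epsilon)\geq 2\pi/\tau$ for every complex zero of the second-order characteristic function at fixed $\epsilon$, with no limiting argument. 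Replace your perturbation step by these fixed-$\epsilon$ statements and your proof closes; as sketched, that step is the one place where it breaks.
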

\begin{proof}
After linearizing (\ref{def})  at the equilibrium  $\phi(t) =\ln p$, 
we find the related characteristic equation
\begin{equation}\label{ehe}
\chi_{+}(z,\epsilon)= \epsilon z^2 -z -1 - Pe^{-z\tau}=0. 
\end{equation}
If  $P\tau e^{1+\tau} < 1$, it follows from \cite[Lemma 1.1]{GTLMS} and \cite[Lemma 2.1]{TT} that there are $\delta >0$ and $\epsilon_1 >0$ such that (\ref{ehe}) has in the half-plane 
$\Re z > z_2-2\delta$ for each $\epsilon \in (0,\epsilon_1]$ exactly three roots $z_j(\epsilon), \ j =0,1,2$.  Moreover, these roots are real and $z_j(\epsilon) \to z_j, \ j =1,2$, and $z_0(\epsilon) \to +\infty$ as $\epsilon \to 0^-$ (as in Lemma \ref{Le9}, here $z_j$ denote the zeros of $\chi_+(z,0)$). 
Therefore the steady state $\ln p$ of (\ref{def}) is hyperbolic and the orbit associated with  $\phi(t,\epsilon)$  belongs to the stable manifold of $\phi(t) =\ln p$.  Thus
 $y(t):= \phi(t,\epsilon)- \ln p$ and $y'(t)$ have at least the exponential rate of decay at $+\infty$. Now, 
arguing by contradiction, suppose that the function $y(t) = \phi(t,\epsilon) -\ln p$ oscillates slowly around $0$.  Clearly,  $y(t)$ satisfies equation (\ref{f}) where 
$A =B  = \epsilon^{-1}, $ $C(t) = -\epsilon^{-1}\int_0^1f'(\phi(t,\epsilon)s+ (1-s)\ln p)ds$, $C(+\infty) =-\epsilon^{-1} f'(\ln p) =\epsilon^{-1} P >0$.  But then Lemma 
\ref{so} guarantees that $y(t)$ has at most the exponential rate of decay at $+\infty$. 
This implies that  there is   a complex zero $z_j(\epsilon)=\alpha_j(\epsilon)+i\beta_j(\epsilon), \ \beta_j(\epsilon) > 0, \ \alpha_j(\epsilon) < z_2(\epsilon)$,  of $\chi_{+}(z,\epsilon)$ and $d(\epsilon) \not=0, \ \delta >0, \ \theta \in \R,$ such that 
$$
\phi(t,\epsilon) -\ln p = d(\epsilon) e^{\alpha_j(\epsilon)t}\cos(\beta_j(\epsilon) t+\theta) + O(e^{(\alpha_j(\epsilon)-\delta) t}), \quad t \to +\infty. 
$$
Now, by Lemma 21 in \cite{TTT}, $\beta_j(\epsilon) \geq 2\pi/\tau$ and therefore $y(t)$  does not oscillate slowly around $0$. The obtained contradiction 
 completes the proof of Corollary \ref{28}. 
\qed 
\end{proof}
Clearly, Theorem \ref{T1} is a direct consequence of Proposition \ref{P1} and Corollaries \ref{30} and \ref{28}.   
\begin{remark}\label{RONE} 
\begin{figure}[h] \label{F41}
\centering \fbox{\includegraphics[width=13.5cm]{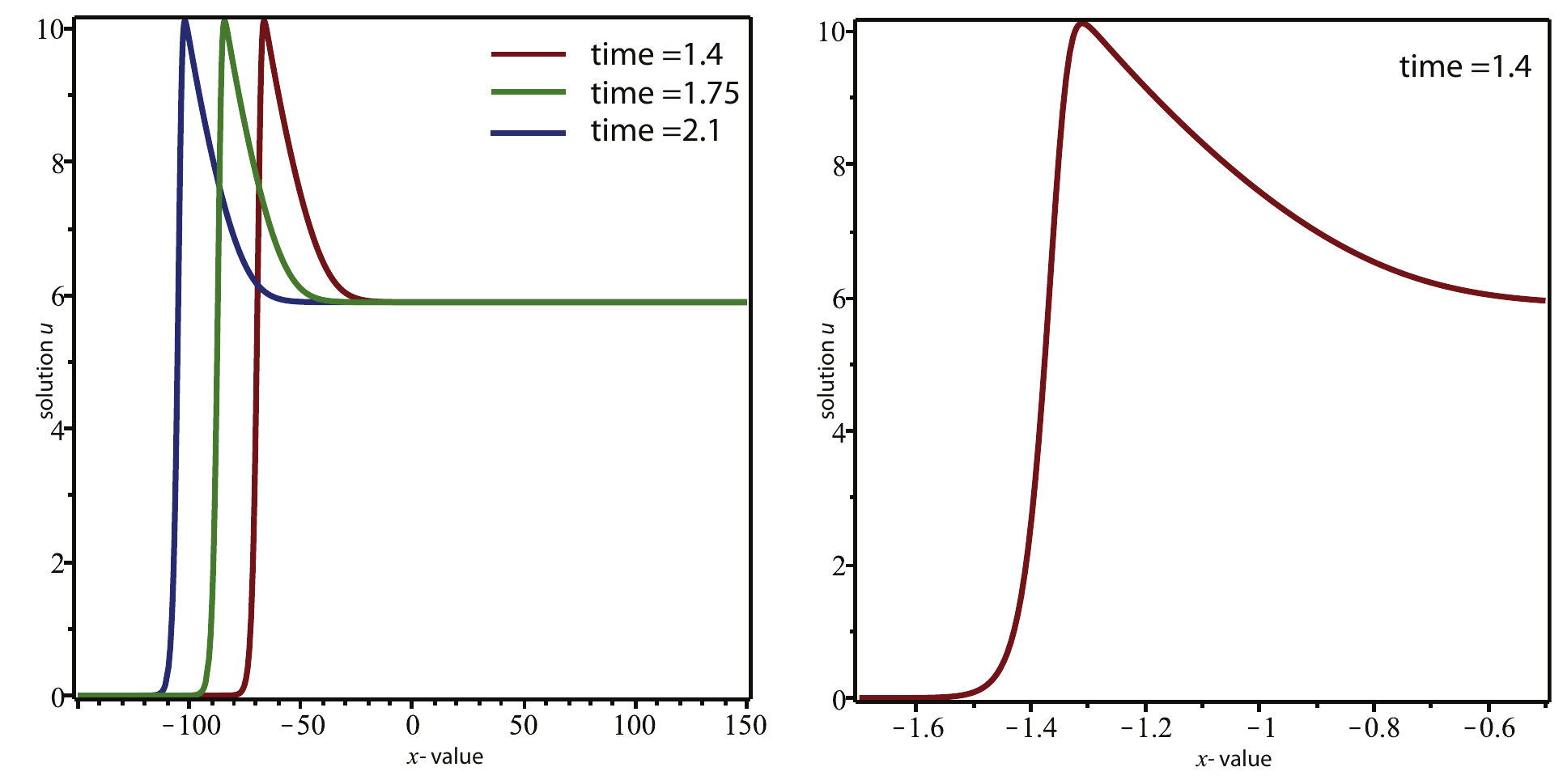}}
\caption{\hspace{0cm}  Equation  (\ref{NBD}) with  $\tau=0.07$, $p=365$:  numerical approximations of the wavefront $u(t,x)=\phi_c(x+ct)$ moving leftward with the speed $c \approx 50$ (left); the rescaled profile $\phi_c(tc)$ for $c\approx 50$ (right). } 
\end{figure} On Fig. 4, we present a fast traveling wave for equation (\ref{NBD})  with  $\tau=0.07$, $p=365$.
The numerical simulations are based on the Crank-Nicholson method which is second-order
accurate in both spatial and temporal directions. The initial function is  
$$
u_0(t,x)= \left\{ \begin{array}{ll}
\exp(0.7x), & \mbox{as} \,\,\, x<0, \enspace t \in [-\tau,0], \\
\ln(p), & \mbox{as} \,\,\,  x \geq 0, \enspace t \in [-\tau,0].
\end{array}
\right.
$$
The spatial step size is chosen as $\Delta x=0.05$ in the computational domain $x \in [-150,150]$ where the Dirichlet boundary conditions $u(t,-150)=0$ and $u(t,150)=\ln p$ are imposed.  The temporal step size is $\Delta t=0.01$. After some short initial period, the numerical solution $u(t,x)$ behaves like a nm-wave moving leftward with the speed $c\approx 50$. This provides  an additional numerical confirmation of the theoretical result in \cite{LLLM} in the case of nm-waves (theoretical speed of propagation 
is $c =48.26\dots$).  Observe that the rescaled profile $\phi_c(ct), \ c \approx 50,$ is rather well approximates the limit profile, $c =\infty$,  on the Fig.1.
\end{remark}
\subsection{Proof of Theorem \ref{T34}}\label{S223}
\vspace{3mm}
\noindent Theorem \ref{T34} follows from the following equivalent statement:
\begin{cor}\label{FC}
 Let $u(t,x)=\phi(x+ct)$ be a wavefront for equation (\ref{NBD}).  Then the profile $\phi(t)$ is eventually monotone at $+\infty$ if and only if the 
characteristic function  $\chi_+(z,\epsilon)= \epsilon z^2-z-1 - Pe^{-z\tau}$ with $\epsilon = c^{-2}$ has at least one negative zero. 
\end{cor}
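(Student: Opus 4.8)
The plan is to establish the two implications of the equivalence separately, relying throughout on the basic dichotomy that, at $+\infty$, every wavefront profile is \emph{either} eventually monotone \emph{or} slowly oscillating around $\ln p$ in the sense of Definition \ref{d2s}. This dichotomy holds for an arbitrary admissible speed $c$, not only for the profiles close to $u^*$, because near $u=\ln p$ the linearization of (\ref{def}) has negative feedback, $f'(\ln p)=-P<0$, so that \cite[Theorem 2.1]{mps} applies to $y(t):=\phi(t)-\ln p$ (cf. \cite[pp. 2321--2322]{TTT}). Consequently, ``$\phi$ is eventually monotone at $+\infty$'' is equivalent to ``$\phi$ does not slowly oscillate around $\ln p$'', and it is the latter formulation that I would test against the spectral condition on $\chi_+(\cdot,\epsilon)$.

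For the implication ``$\chi_+(\cdot,\epsilon)$ has a negative zero $\Rightarrow$ eventual monotonicity'' I would follow the proof of Corollary \ref{28}, observing that a single negative zero already suffices. A negative real root, together with the spectral description in \cite[Lemma 1.1]{GTLMS} and \cite[Lemma 2.1]{TT}, excludes roots on the imaginary axis and renders $\ln p$ hyperbolic, so the orbit of $y=\phi-\ln p$ lies on the stable manifold and $y,y'$ decay at least exponentially. Assuming, for contradiction, that $y$ slowly oscillates, I note that $\phi$ is analytic for $\epsilon>0$, whence the zero set of $y'$ contains no nondegenerate interval and $y$ solves (\ref{f}) with $A=B=\epsilon^{-1}$ and $C(+\infty)=\epsilon^{-1}P>0$; Lemma \ref{so} then forbids $y$ from being a super-exponentially small solution. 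The asymptotics of $y$ is therefore governed by a genuine characteristic root $z_j$: a real $z_j$ makes $y$ eventually one-signed, contradicting oscillation, while a complex $z_j$ satisfies $\beta_j\ge 2\pi/\tau$ by \cite[Lemma 21]{TTT} and forces adjacent zeros closer than $\tau/2$, contradicting slow oscillation. Hence $\phi$ is eventually monotone.

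For the converse I would argue by contraposition, showing that the absence of a negative zero of $\chi_+(\cdot,\epsilon)$ produces a slowly oscillating profile. Here the classical route is blocked: for the large values of $p$ of interest the birth function $f$ violates the feedback condition (\ref{fco}), so neither the method of upper and lower solutions nor a direct appeal to \cite[Lemma 25]{TTT} is available. Instead I would invoke the existence-and-oscillation statement Proposition \ref{main}, borrowed from \cite{TTT,TT}, whose hypotheses are phrased through the auxiliary ``second series'' of inequalities, and then certify those hypotheses from the spectral assumption by means of Lemma \ref{CK}: the cumbersome inequality (\ref{ck+}) converts ``$\chi_+(\cdot,\epsilon)$ has no negative real zero'' into exactly the form required by Proposition \ref{main}, yielding a slowly oscillating $\phi$ and thereby contradicting eventual monotonicity.

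The principal obstacle is this last translation. The spectral condition in the statement and the conditions driving Proposition \ref{main} are recorded in genuinely different quantities, and matching them for all admissible $\epsilon=c^{-2}>0$ amounts to a delicate convexity and monotonicity analysis of the transcendental map $z\mapsto \epsilon z^2-z-1-Pe^{-z\tau}$ on the negative half-line --- precisely the content of Lemma \ref{CK}. In contrast to the case $\epsilon=0$, where the threshold is the clean inequality $P\tau e^{1+\tau}<1$, for $\epsilon>0$ the criterion for a negative root is no longer explicit, which is why (\ref{ck+}) is needed. A secondary difficulty, already resolved by Lemma \ref{so}, is the loss of the feedback structure for large $p$, which obstructs any purely linear-determinacy proof and makes the second-order super-exponential theory indispensable.
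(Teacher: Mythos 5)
Your second paragraph (negative zero of $\chi_+(\cdot,\epsilon)$ implies eventual monotonicity) reproduces the paper's endgame correctly -- hyperbolicity of $\ln p$, exponential decay, Lemma \ref{so} barring super-exponential slowly oscillating solutions, Cao-type asymptotics and $\beta_j\ge 2\pi/\tau$ from \cite[Lemma 21]{TTT} -- but your entry point is a genuine gap: the dichotomy ``eventually monotone or slowly oscillating'' is \emph{not} free of charge from $f'(\ln p)=-P<0$ and \cite[Theorem 2.1]{mps}. The discrete Lyapunov function ${\rm sc}$ is monotone only while the solution segments stay in a region where the feedback condition (\ref{fco}) holds along the \emph{range} of $\phi$, and for $\ln p>2.833$ the blowflies nonlinearity violates (\ref{fco}); knowing merely that $\phi(t)$ is eventually close to $\ln p$ gives ${\rm sc}$ eventually constant, but does not exclude it stabilizing at a value $\geq 3$ (rapid oscillation). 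The paper closes precisely this hole: it splits cases in $P$ (for $P\in(-1,0]$ and $P\in(0,1]$ the profile is monotone on $\R$ by \cite{SZ} and \cite[Theorem 2.3]{GTLMS}), and for $P>1$ it proves $\phi(t)>1$ for all $t\geq t_1$ via the variation-of-constants representation (\ref{L42}) and the bound $\phi(t_2)>\xi(\tau)\ln p>(1-1/P)(P+1)>1$, which requires $(\tau,c)\in\mathcal{D}_s$; only then do Proposition \ref{main} and \cite[Corollary 14]{TTT} yield slow oscillation. Membership in $\mathcal{D}_s$ is exactly where Lemma \ref{CK} enters, through the inclusion $\mathcal{D}_m\subset\mathcal{D}_s$ of (\ref{ms}) -- i.e.\ Lemma \ref{CK} serves \emph{this} implication, not the converse.

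Your treatment of the converse inverts the logic of Lemma \ref{CK} and would not work. Inequality (\ref{ck+}) shows $T(c)<\tau(c)$, i.e.\ ``$\chi_+(\cdot,\epsilon)$ \emph{has} a negative real zero $\Rightarrow$ condition (\ref{ce}) of Proposition \ref{main} holds''; it cannot convert the \emph{absence} of a negative zero into the hypotheses of Proposition \ref{main} (if it could, combined with $\mathcal{D}_m\subset\mathcal{D}_s$ every pair $(\tau,c)$ would lie in $\mathcal{D}_s$, which is false). Moreover, Proposition \ref{main} never concludes oscillation -- it only delivers the monotone-or-slowly-oscillating dichotomy -- so your contraposition would still lack an argument excluding eventual monotonicity. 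In the paper this direction is in fact the short one: an eventually monotone wavefront converges to $\ln p$ exponentially and eventually monotonically, and the proof of \cite[Lemma 25]{TTT} then forces $\chi_+(\cdot,\epsilon)$ to possess a negative real zero; equivalently, if no such zero exists, eventual monotonicity is impossible. So the ``principal obstacle'' you identify is real, but it lives in the implication you thought was routine, and the tool you reserved for the converse (Lemma \ref{CK}) is consumed there.
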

\begin{proof} Suppose that equation (\ref{NBD}) has an eventually monotone wavefront $u(t,x)=\phi(x+ct)$. Then it follows from the proof of Lemma 25 in \cite{TTT}  that 
$\chi_+(z,\epsilon)$ has a negative zero. 

Conversely, suppose that, given a  fixed pair $(\tau,c)$ of positive parameters,  equation (\ref{NBD}) possesses a semi-wavefront $u(t,x)=\phi(x+ct)$ and  $\chi_+(z,\epsilon)$ has a negative zero $z_1<0$.  We will denote the latter fact as  $(\tau,c)\in \mathcal{D}_m$, where 
$$
\mathcal{D}_m = \left\{\tau \geq 0,  c >0; %\mbox{such that equation }\ 
  z^2-cz-1 - Pe^{-zc\tau} =0\ \mbox{has a negative root}\right\}.  
$$
We have to prove that, in such a case, $\phi(t)$ is monotone in some neighborhood of $+\infty$. First, consider 
$P \in (-1,0]$.  Then  $\phi(t)$ is necessarily monotone on $\R$ in view of So and Zou main theorem in \cite{SZ} and the semi-wavefront uniqueness \cite{AGT,TZ,SLW}.  Second, if $P \in (0,1]$, then the monotonicity of  $\phi(t)$ on $\R$ was established in  \cite[Theorem 2.3]{GTLMS}. 

Hence, in what follows,  we can assume that $P>1$.  Then the relation (\ref{ms}) in the appendix says that $(\tau,c)\in  \mathcal{D}_m \subset \mathcal{D}_s$. In consequence, since $1-1/P > (P^2-P)/(P^2+1)$, Proposition \ref{main}  guarantees that $\phi(+\infty)=\ln p$ (i.e. $\phi$ is a wavefront). Moreover, $\phi(t)-\ln p$ decays exponentially to $0$ as $t \to +\infty$  (e.g. see \cite[Lemma 1.1]{GTLMS} or \cite[Proposition 5.6]{TT}). 

Arguing by contradiction, suppose now  that $\phi$ oscillates around $\ln p$  on some connected neighborhood $\mathcal J$ of $+\infty$.  We claim that  these oscillations are necessarily slow on $\mathcal J$.

 %\vspace{2mm}
 
%\underline{Claim.} $\phi$ oscillates slowly on $\mathcal J$ in the sense of Definition \ref{d2s}. 

% \vspace{2mm}

Indeed, if $f$ satisfies the feedback condition (\ref{fco})  and $P >1$ (equivalently, $p \in (e^2,16.999\dots)$ or $\ln p \in (2,2.833\dots)$, cf. \cite[Corollary 2.4]{GTLMS}), then the slow oscillation property of $\phi(t)$ follows  from Proposition \ref{main}.   

Suppose now that $\ln p > 2.833$ and consider the oscillating wavefront $\phi(t)$ of equation (\ref{def}). By Proposition \ref{main}, the first three critical points $t_1< t_2<t_3$ of $\phi(t)$ are finite and  such that $\phi'(t)> 0$ on $ (-\infty, t_1)\cup (t_2,t_3), \ \phi'(t) <0$  on $(t_1,t_2)$ and $\phi(t_1) > \ln p,$ $\phi(t_2) <\ln p < \phi(t_2- \tau)$. We will prove that $\phi(t) >1$ for all 
$t \geq t_1$. First, we observe that, by \cite[Lemma 4.2]{TT} 
\begin{equation}\label{L42}
\phi(t_2) = \xi(t_2-s_2)\left\{\ln p +\frac{1}{\epsilon(\nu -\lambda)} \int_{s_2}^{t_2}\left(e^{\lambda(s_2-u)}- e^{\nu (s_2-u)}\right)f(\phi(u-\tau))du\right\},
\end{equation}
where  $\lambda <0 <\nu $ denote the roots of the equation $\epsilon z^2-z-1 =0$, $s_2$ is the unique point in $ (t_1,t_2)$ where $\phi(s_2) = \ln p$ and    
$\xi(x) = (\nu -\lambda)/(\nu  e^{-\lambda x} -\lambda e^{-\nu  x})$. Since $s_2 \in (t_2-\tau, t_2)$,  $(\tau,c)\in \mathcal{D}_m \subset \mathcal{D}_s$, $P > 1.833$ and $\xi'(x)<0$ for $x>0$, we immediately find that   
$$\phi(t_2) > \xi(\tau)\ln p > \left(1-\frac 1 P\right)(P+1)=P - \frac 1 P \geq 1.833 - 1/1.833>1.$$
Thus $\phi(t) >1$ for all $t \in [t_1,t_3)$ and if $\phi(s) \leq 1$ at some point $s \geq t_3$, then there exists the leftmost critical point $t_*>t_3$ such that 
$\phi(t_*) = \min\{\phi(s), s \in [t_1,t_*]\}  \leq 1,\ \phi'(t_*)=0$ and $\phi''(t_*) \geq 0$.  In particular, $\phi(t_*-\tau) > \phi(t_*)$. Since, in addition, equation (\ref{def}) implies that 
$\phi(t_*) \geq f(\phi(t_*-\tau))$, we obtain that  $\phi(t_*-\tau)> \ln p$. Therefore   $\phi(s_*)=\ln p$ for  some $s_* \in (t_*-\tau, t_*)$.  Invoking again formula (\ref{L42}),  we find that $\phi(t_*) > \xi(\tau)\ln p >1$. This contradiction proves that actually $\phi(t)>1$ for all $t \geq t_1$.  Therefore the feedback condition (\ref{fco}) is satisfied on the 
set $\{\phi(t), \ t \geq t_1\} \subset [1,+\infty)$. Since the slow oscillation property of $\phi(t)$ follows  now from Proposition \ref{main} and Corollary 14 in \cite{TTT},  the claim is proved.

 \vspace{2mm}
 
Finally, arguing as in the second half of the proof of Corollary \ref{28}, we conclude that the assumption $(\tau,c)\in  \mathcal{D}_m$ is not compatible with the existence of slowly oscillating (around $\ln p$) and exponentially converging (toward $\ln p$) wavefront $\phi(t)$. Therefore $\phi(t)$ is monotone at $+\infty$. 
\qed 
\end{proof}

\section{Appendix}
In this section, we are use the  notation
$$
\lambda = \frac{c(c-\sqrt{c^2+4})}{2}, \quad \nu  = \frac{c(c+\sqrt{c^2+4})}{2}, \quad P=\ln p -1, \quad \epsilon = c^{-2}. 
$$ 
The next assertion  follows from  \cite[Theorem 1.1]{TT} and \cite[Theorems 3, 13]{TTT},  is instrumental in proving Theorem \ref{T34} (or Corollary \ref{FC}) of this paper and 
is given for the convenience of the reader. We  consider the following restrictions on  the nonlinearity $f$:
\begin{description}
\item[{\rm \bf(H)}] Let $f \in C^3(\R_+, \R_+)$ have only one critical point
$x_M$ (maximum) and assume that  $f(0)=0$ and $f'(0) >1$.  Suppose further that $0 < f(x) \leq f'(0)x,$ $ x > 0,$ that the equation $f(x)=x$
has exactly two roots $0, \ \kappa>0$ with $\Gamma: = f'(\kappa)<0$, and that the Schwarz
derivative $Sf$ is negative for all $x >0,\  x \not=x_M$:\\
$$(Sf)(x)=f'''(x)(f'(x))^{-1}-(3/2)
\left(f''(x)(f'(x))^{-1}\right)^2 < 0.$$
\end{description}
\begin{proposition} \label{main} Assume {\rm \bf(H)}, suppose that the equation $\epsilon z^2-z-1+f'(0)e^{-z\tau}=0$ has at least one positive root,   and 
\begin{equation}\label{ce} \frac{\nu  - \lambda }{\nu 
e^{-\lambda \tau}- \lambda e^{-\nu  \tau}}
\geq \frac{{\Gamma}^{2} + \Gamma}{{\Gamma}^{2} + 1}.
\end{equation}
Then equation (\ref{def}) has a positive
wavefront $\phi(t)$. Furthermore, if $\phi(t)$ is non-monotone on $\R$,  then  there exist $t_3\geq t_2> t_1,\  t_1 \in \R,\ \ t_2, t_3\in \R \cup \{+\infty\}$, such that $\phi'(t)> 0$ on $ (-\infty, t_1)\cup (t_2,t_3), \ \phi(t_1) > \kappa,$ and $\phi'(t) <0$  on $(t_1,t_2)$. If $t_2$ is finite then $\phi(t_2) <\kappa < \phi(t_2- \tau)$ and $t_3  >t_2$. Finally, if  $f$ 
satisfies the feedback condition 
\begin{equation}\label{fco}
(f(x)-\kappa)(x-\kappa) <0,\ x \in (f(f(x_M)), f(x_M))\setminus\{\kappa\},
\end{equation}
then $\phi(t)$ is either eventually  \mbox{monotone or oscillates  slowly  (see Definition \ref{d2s}) around $\kappa$. }
\end{proposition}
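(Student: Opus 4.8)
The plan is to observe that the statement bundles three logically independent conclusions---existence of a positive wavefront, the geometry of the first excursion of a non-monotone profile, and the eventual-monotone/slow-oscillation dichotomy under the feedback condition---each of which is already available in the cited literature. Accordingly, I would prove the proposition by aligning the present hypotheses with those of \cite[Theorem 1.1]{TT} and \cite[Theorems 3, 13]{TTT} and then invoking them in turn; no genuinely new argument is needed beyond this matching, which is precisely why the result is recorded here only for the reader's convenience.

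For the existence claim I would appeal directly to \cite[Theorem 1.1]{TT}. Its hypotheses are exactly \textbf{(H)} together with the requirement that the characteristic function $\epsilon z^2 - z - 1 + f'(0)e^{-z\tau}$ at the trivial steady state possess a positive real root (which supplies an admissible exponential rate for the leading edge at $-\infty$) and the threshold inequality (\ref{ce}). The one point to verify is that the left-hand side of (\ref{ce}) is precisely $\xi(\tau)$ in the notation of (\ref{L42}), i.e. the value at the delay of the reciprocal kernel associated with the linear operator $\phi\mapsto \epsilon\phi'' - \phi' - \phi$. Once this identification is made, (\ref{ce}) is literally the sub/super-solution threshold used in \cite{TT}, and the existence of the positive wavefront $\phi$ follows.

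For the geometry of a non-monotone profile I would invoke \cite[Theorem 3]{TTT}. By \cite[Theorem 1]{TTT} the leading edge is strictly increasing until its first crossing $t_1$ of the level $\kappa$, and non-monotonicity forces an overshoot $\phi(t_1) > \kappa$, a first local maximum, and then a first local minimum at $t_2$. Evaluating (\ref{def}) at a critical point $t$ gives $\epsilon\phi''(t) = \phi(t) - f(\phi(t-\tau))$; reading this at $t_2$ (where $\phi''(t_2)\geq 0$ and $\phi(t_2) < \kappa$) yields $\phi(t_2) \geq f(\phi(t_2-\tau))$, and the unimodality of $f$ together with $\phi(t_2) < \kappa$ then forces $\phi(t_2-\tau) > \kappa$, which is the asserted inequality. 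The ordering $t_3 > t_2$ and the sign pattern of $\phi'$ are read off the same way from \cite[Theorem 3]{TTT}.

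The remaining dichotomy is the substantive part, and here I would use \cite[Theorem 13]{TTT}. The mechanism is a discrete Lyapunov functional of Mallet-Paret--Sell type \cite{mps}: the feedback condition (\ref{fco}) endows the equation near $\kappa$ with a definite sign structure, so that ${\rm sc}(\overline{\phi}_t)$ is nonincreasing in $t$ and stabilizes at a value at most $2$; the value $0$ is eventual monotonicity, while $1$ or $2$ is slow oscillation in the sense of Definition \ref{d2s}. The role of the negative Schwarzian hypothesis in \textbf{(H)} is to pin down the global shape of $f$ so that the underlying one-dimensional dynamics are well controlled. I expect the main obstacle to be the a priori range estimate guaranteeing that, once the profile enters the feedback window $(f(f(x_M)), f(x_M))$, it never leaves the region where (\ref{fco}) applies---this amplitude bound is exactly what makes the feedback condition usable along the whole orbit, and it is handled in \cite{TTT}. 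By contrast, the existence step is comparatively routine once (\ref{ce}) has been identified with the kernel inequality of \cite{TT}.
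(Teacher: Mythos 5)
Your proposal is correct and takes essentially the same route as the paper: the paper offers no independent proof of Proposition \ref{main}, stating only that it follows from \cite[Theorem 1.1]{TT} and \cite[Theorems 3, 13]{TTT}, which is precisely the hypothesis-matching you carry out (including the correct identification of the left-hand side of (\ref{ce}) with $\xi(\tau)$, the quantity used in (\ref{L42})). Your auxiliary sketches---the critical-point computation $\epsilon\phi''(t_2)=\phi(t_2)-f(\phi(t_2-\tau))$ at the local minimum and the Mallet-Paret--Sell discrete Lyapunov mechanism behind the monotone/slowly-oscillating dichotomy---are consistent with the cited sources and introduce no deviation from the paper's approach.
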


\begin{lem} \label{bb}For each fixed $c>0$ and $P>1$, equation 
 \begin{equation}\label{ckv} 
\Phi(\tau,c):=  \frac{\nu -\lambda}{\nu  e^{-\lambda \tau} - \lambda e^{-\nu  \tau}} =1-\frac 1 P 
\end{equation}
has a unique positive root $\tau=\tau(c)$.  The function $\tau(c):(0,+\infty)\to (0,+\infty)$ is smooth and has a finite limit $\tau(+\infty)= \ln(P/(P-1))=:\hat \tau >0$.\end{lem}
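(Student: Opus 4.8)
The plan is to treat $\Phi(\cdot,c)$ as a strictly monotone function of $\tau$, obtain existence and uniqueness of the root by the intermediate value theorem, get smoothness from the implicit function theorem, and finally pin down the limit $\tau(+\infty)$ by a sandwiching argument built on the monotonicity.

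First I record the algebraic facts. Since $\lambda<0<\nu$ are the two roots of $\epsilon z^2-z-1=0$, equivalently of $z^2-c^2z-c^2=0$, we have $\nu-\lambda=c\sqrt{c^2+4}>0$ and $\lambda\nu=-c^2<0$. Writing $D(\tau):=\nu e^{-\lambda\tau}-\lambda e^{-\nu\tau}$ for the denominator of $\Phi$, a direct differentiation gives $D'(\tau)=\lambda\nu\bigl(e^{-\nu\tau}-e^{-\lambda\tau}\bigr)$. For $\tau>0$ one has $-\nu\tau<0<-\lambda\tau$, hence $e^{-\nu\tau}-e^{-\lambda\tau}<0$; combined with $\lambda\nu<0$ this yields $D'(\tau)>0$. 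Thus $D$ is strictly increasing, and $\Phi(\tau,c)=(\nu-\lambda)/D(\tau)$ is strictly decreasing in $\tau$ on $[0,\infty)$, with $\partial_\tau\Phi=-(\nu-\lambda)D'/D^2<0$.

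Next I evaluate the boundary behaviour. At $\tau=0$ we have $D(0)=\nu-\lambda$, so $\Phi(0,c)=1$; as $\tau\to+\infty$ the term $\nu e^{-\lambda\tau}=\nu e^{|\lambda|\tau}\to+\infty$ dominates while $-\lambda e^{-\nu\tau}\to0$, so $D(\tau)\to+\infty$ and $\Phi(\tau,c)\to0$. Since $P>1$ gives $1-1/P\in(0,1)$, strict monotonicity and continuity of $\Phi(\cdot,c)$ produce a unique root $\tau(c)>0$ of (\ref{ckv}). Smoothness of $c\mapsto\tau(c)$ then follows from the implicit function theorem: $\Phi$ is smooth (indeed real-analytic) in $(\tau,c)$ on $(0,\infty)^2$ because $\lambda(c),\nu(c)$ are smooth and $D>0$ never vanishes, and $\partial_\tau\Phi(\tau(c),c)<0\neq0$ by the strict monotonicity above.

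Finally, the limit. From the explicit formulas one checks that $\lambda\to-1$ and $\nu\to+\infty$ as $c\to+\infty$ (e.g. $\lambda=-2c/(c+\sqrt{c^2+4})\to-1$). Dividing numerator and denominator of $\Phi$ by $\nu$ and using $\lambda/\nu\to0$, $e^{-\lambda\tau}\to e^{\tau}$, and $(\lambda/\nu)e^{-\nu\tau}\to0$, I obtain $\Phi(\tau,c)\to e^{-\tau}$ for each fixed $\tau>0$. The limit profile $e^{-\tau}$ is strictly decreasing and meets the level $1-1/P$ exactly at $\tau=-\ln(1-1/P)=\ln(P/(P-1))=\hat\tau$. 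To transfer this to the roots, I fix $\delta>0$ and note $e^{-(\hat\tau-\delta)}>1-1/P>e^{-(\hat\tau+\delta)}$; by the pointwise convergence, for all large $c$ one has $\Phi(\hat\tau-\delta,c)>1-1/P>\Phi(\hat\tau+\delta,c)$, and since $\Phi(\cdot,c)$ is strictly decreasing its root $\tau(c)$ must lie in $(\hat\tau-\delta,\hat\tau+\delta)$. Hence $\tau(c)\to\hat\tau$. The computations are all elementary; the only step requiring slight care is this last one, where the pointwise limit $\Phi(\tau,c)\to e^{-\tau}$ must be combined with the monotonicity of $\Phi(\cdot,c)$ to guarantee convergence of the roots rather than merely of the function values — this modest sandwiching is the main obstacle.
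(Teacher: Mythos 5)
Your proof is correct and takes essentially the same route as the paper: strict monotonicity of $\Phi(\cdot,c)$ (via $\partial\Phi/\partial\tau<0$) together with the boundary values $\Phi(0,c)=1$ and $\Phi(+\infty,c)=0$ gives existence and uniqueness, and the limit is pinned down by the relation $e^{-\hat\tau}=1-1/P$. Your explicit sandwiching via the pointwise limit $\Phi(\tau,c)\to e^{-\tau}$ simply spells out what the paper handles more tersely with a subsequential-limit argument for $\tau(c_j)$, so the two proofs coincide in substance.
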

\begin{proof}  It is easy to check that 
$
\partial\Phi(\tau,c)/\partial\tau <0
$
for all $\tau > 0, c >0$. Taking into account that $\Phi(0,c) = 1$, $\Phi(+\infty,c) = 0$ and $P>1$, we deduce the existence of the unique solution $\tau =\tau(c)$
of equation (\ref{ckv}).  Now, let $\hat \tau$ be the  limit of  sequence $\tau(c_j)$ with $c_j \to +\infty$. Then necessarily $e^{-\hat \tau} = 1-1/P$ 
which  proves the uniqueness and finiteness of $\hat\tau >0$. Thus $\tau(+\infty) = \ln(P/(P-1)).$ \qed
\end{proof} 
In the first quadrant of the plane $(\tau,c)$, given a fixed number $P>1$, together with the above defined set $\mathcal{D}_m$ of parameters $(\tau,c)$, we consider also the following domain
$$
\mathcal{D}_s = \left\{\tau \geq 0,  c >0, \ \Phi(\tau,c)=  \frac{\nu -\lambda}{\nu  e^{-\lambda \tau} - \lambda e^{-\nu  \tau}} \geq 1-\frac 1 P\right\}.
$$
Lemma \ref{bb} shows that $\mathcal{D}_s = \{(\tau,c) \in \R_+^2:  0 \leq \tau \leq \tau(c), \ c >0\}$. On the other hand, it was established in \cite[Section 2.3]{GTLMS} that 
$\mathcal{D}_m = \{(\tau,c) \in \R_+^2:  0 \leq \tau \leq T(c), \ c >0\}$, where  $T(c)$ is defined as the unique 
positive solution of equation 
 \begin{equation}\label{tak+} 
\frac{ec^2\tau^2}{2+\sqrt{c^4\tau^2+4c^2\tau^2+4}}\exp\left(\frac{\sqrt{c^4\tau^2+4c^2\tau^2+4}-c^2\tau}{2}\right) = \frac{1}{P}.
\end{equation}
By  \cite[Lemma 1.1]{GTLMS}, $T(c)$ is a smooth decreasing function such that $T(0^+)=+\infty$, $T(+\infty)=T_*$, where 
$PeT_*e^{T_*}=1$.  Observe that $Pe\hat \tau e^{\hat \tau}=eP^2(P-1)^{-1}\ln(P/(P-1))>1$ for $P>1$ so that $\tau(+\infty)=\hat\tau >T_*= T(+\infty)$. 
In fact, the next lemma assures that $T(c) < \tau(c)$ for all $c >0$ so that 
\begin{equation}\label{ms}
\mathcal{D}_m \subset \mathcal{D}_s.  
\end{equation}
\begin{lem} \label{CK} For each $\tau > 0$ and $c>0$, it holds that 
 \begin{equation}\label{ck+} 
\frac{ec^2\tau^2}{2+\sqrt{c^4\tau^2+4c^2\tau^2+4}}\exp\left(\frac{\sqrt{c^4\tau^2+4c^2\tau^2+4}-c^2\tau}{2}\right) > 1 - \frac{\nu -\lambda}{\nu  e^{-\lambda \tau} - \lambda e^{-\nu  \tau}}. 
\end{equation}
\end{lem}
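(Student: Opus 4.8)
The plan is to eliminate $c$ and $\tau$ in favour of the two exponents that already occur on the right-hand side, and then to prove a cleaner sufficient inequality. I would set $a:=-\lambda\tau$ and $b:=\nu\tau$, so that $0<a<b$. Since $\lambda,\nu$ are the roots of $\epsilon z^{2}-z-1=0$ we have $\lambda+\nu=c^{2}$ and $\lambda\nu=-c^{2}$, hence $b-a=c^{2}\tau$, $ab=c^{2}\tau^{2}$, $a+b=c\tau\sqrt{c^{2}+4}$ and therefore $c^{4}\tau^{2}+4c^{2}\tau^{2}+4=(a+b)^{2}+4$. The correspondence $(c,\tau)\mapsto(a,b)$ is a bijection of $(0,\infty)^{2}$ onto the triangle $\{0<a<b\}$ (with inverse $\tau=ab/(b-a)$, $c=(b-a)/\sqrt{ab}$), so it suffices to prove (\ref{ck+}) there. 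Using the identity
\[
\tfrac12\Bigl(\sqrt{(a+b)^{2}+4}-(b-a)\Bigr)=a+\frac{2}{(a+b)+\sqrt{(a+b)^{2}+4}},
\]
the left-hand side of (\ref{ck+}) becomes
\[
L:=\frac{e\,ab}{2+\sqrt{(a+b)^{2}+4}}\,\exp\!\left(a+\frac{2}{(a+b)+\sqrt{(a+b)^{2}+4}}\right),
\]
while, since $\nu-\lambda=(a+b)/\tau$ and $\nu e^{-\lambda\tau}-\lambda e^{-\nu\tau}=(be^{a}+ae^{-b})/\tau$, the right-hand side equals $1-\Phi=N/M$ with $M:=be^{a}+ae^{-b}>0$ and $N:=b(e^{a}-1)-a(1-e^{-b})$. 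Thus the goal is the inequality $LM>N$ on $\{0<a<b\}$.

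A first observation is that $N>0$: indeed $b(e^{a}-1)\ge ab\ge a(1-e^{-b})$ because $e^{a}-1\ge a$ and $1-e^{-b}\le b$, so (\ref{ck+}) is genuinely non-trivial (equivalently the right-hand side lies in $(0,1)$). Next I would discard the obviously harmless summand of $M$: as $M\ge be^{a}$ and $L>0$, it is enough to prove the stronger, fraction-free inequality
\[
L>G(a,b):=(1-e^{-a})-\frac{a(1-e^{-b})}{b}\,e^{-a}\,\bigl(=N/(be^{a})\bigr),
\]
since then $LM\ge L\,be^{a}>N$. This is convenient because $0<G<1$, so the whole region where $L\ge1$ is settled for free; because $L$ grows like $e^{a}$, the set $\{L<1\}$ is bounded, and this is where all the work lies.

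To prove $L>G$ on $\{L<1\}$ I would partition this bounded region and use tailored elementary estimates. The endpoints are reassuring: slicing along $a=rb$ with $r\in(0,1)$, one finds $L\sim\tfrac{e^{2}}{4}ab$ and $G\sim\tfrac{a(a+b)}2$ as $b\to0^{+}$, so $L/G\to e^{2}/(2(1+r))\ge e^{2}/4>1$, while $L\to+\infty$ and $G<1$ as $b\to+\infty$. For the interior one bounds $L$ below through $\sqrt{(a+b)^{2}+4}\le 2+\tfrac14(a+b)^{2}$ and $\tfrac{2}{(a+b)+\sqrt{(a+b)^{2}+4}}\ge \tfrac1{1+a+b}$ (retaining the factor $e^{a}$, which must not be dropped), and bounds $G$ above by the elementary estimates $e^{-a}\ge1-a$, $1-e^{-b}\ge b-\tfrac{b^{2}}2$ when $b$ is small, and by $1-e^{-b}\le1$ together with $G<1-e^{-a}<a$ once $b$ exceeds a fixed threshold. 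In each sub-regime this reduces $L>G$ to an explicit one- or two-variable inequality verifiable by elementary calculus (monotonicity in $a$ and in $a+b$).

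The main obstacle is the small-argument, near-diagonal regime $a\approx b\to0$. There the two leading terms of $N=b(e^{a}-1)-a(1-e^{-b})$ cancel, leaving $N\sim\tfrac{ab(a+b)}2$, so any premature simplification (dropping $ae^{-b}$ from $M$ too early, or replacing $1-e^{-b}$ by $b$ when $b$ is not small) inflates the right-hand side past $L$ and reverses the inequality; one must keep the estimates sharp enough that the constant $e^{2}/4$ carried by $L$ survives against the value $\tfrac12+a/b<\tfrac32$ produced by $G$. Managing this comfortable but fragile margin uniformly across the sub-regimes is precisely the cumbersome bookkeeping behind (\ref{ck+}). Once (\ref{ck+}) is established, combining it with the strict monotonicity of $\Phi(\cdot,c)$ from Lemma \ref{bb} gives $T(c)<\tau(c)$ for every $c>0$, i.e. $\mathcal{D}_m\subset\mathcal{D}_s$.
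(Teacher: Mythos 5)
Your change of variables and the algebraic reductions are all correct, and in fact they closely shadow the paper's own substitutions: with $h=c\tau$ and $z=(c+\sqrt{c^2+4})/2$ one has $a=h/z$ and $b=hz$, so the paper's final variables are exactly $w=1+a/b\in(1,2]$ and $\sigma=b>0$ in your notation. Your identity for the exponent checks out, $1-\Phi=N/M$ with $M=be^{a}+ae^{-b}$ and $N=b(e^{a}-1)-a(1-e^{-b})>0$ is right, and your reduced target $L>G$ does appear to be globally true (the worst ratio, $L/G\to e^{2}/4$, occurs at the origin along the diagonal, as your slicing shows). The structural difference from the paper is which factor is sacrificed: you keep the exponential factor $e^{a}$ in $L$ and drop $ae^{-b}$ from $M$, whereas the paper keeps $M$ intact but discards the exponential excess of the left side, reducing (in your variables) to $(4+a+b)N/M\le e\,ab$, which it then proves \emph{completely} by expanding $A(w,\sigma)=w\sum_{k\ge2}A_k(w)\sigma^{k}/k!$, verifying the signs of all coefficients $A_k(w)$, and bounding a discriminant for the residual quadratic part where $A_3$ may change sign.

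Nevertheless, as written your proposal has a genuine gap, in two respects. First, the organizing claim that $\{L<1\}$ is bounded is false: for fixed $a$ one has $L\to e\,a\,e^{a}$ as $b\to+\infty$, so for every $a$ with $e\,a\,e^{a}<1$ (e.g. $a\le 0.2$) the set $\{L<1\}$ contains an unbounded strip in the $b$-direction, and your compact-region strategy does not cover the domain as described. This is repairable --- for large $b$ one has $G<1-e^{-a}<a$, while $L\ge e\,ab\,e^{a}/(4+a+b)>a$ as soon as $(e-2)b\ge 4$, say --- but neither half of that patch appears in your text. Second, and more importantly, the actual verification of $L>G$, which is the entire content of the lemma, is never carried out: corner asymptotics such as $L/G\to e^{2}/(2(1+r))$ as $b\to 0^{+}$ along $a=rb$ give pointwise limits, not uniform bounds on a neighborhood, and the interior is disposed of only by a promise of ``tailored elementary estimates'' in unspecified sub-regimes. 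Precisely this cumbersome uniform bookkeeping is what the paper's series-plus-discriminant computation performs; until you either execute your sub-regime estimates explicitly or adapt that expansion (for instance, expand $be^{a}L-N$ in powers of $b$ at fixed $a/b$ and control all coefficients), the proposal remains a plausible plan with correct reductions rather than a proof.
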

\begin{proof} With  $h =c\tau$, inequality (\ref{ck+}) is equivalent to 
 $$
l=eh^2\exp\left(\frac{2(h^2+1)}{\sqrt{h^2(c^2+4)+4}+ch}\right) > \left(2+\sqrt{h^2(c^2+4)+4}\right)\left(1 - \frac{\nu -\lambda}{\nu  e^{-\lambda \tau} - \lambda e^{-\nu  \tau}}\right)=r.
$$
For fixed $h>0$, we equation maximize $r$  and minimize $l$ with respect to $c \in \R_+$.  The second task is easy: $\frak{l}=\min_{c\geq 0} l=eh^2$. To maximize $r$, we will introduce the new variable $z= (c+\sqrt{c^2+4})/2$, $z \in [1,+\infty)$. Then $r$ takes the form 
$$
r= r(z): =  \left(2+\sqrt{h^2\left(z+\frac 1 z\right)^2+4}\right)\left(1 - \frac{z^2+1}{z^2e^{h/z} +  e^{-hz}}\right).
$$
We will further simplify $r$: 
\begin{equation*}
r(z) <  r_1(z):= \left(4+h\left(z+\frac 1 z\right)\right)\left(1 - \frac{z^2+1}{z^2e^{h/z} +  e^{-hz}}\right).
\end{equation*}
We claim that, for all $z \geq 1$, $r_1(z) \leq eh^2$. 
First, we rewrite this last  inequality in the following equivalents forms: 
$$
1 \leq \frac{t^2+1}{e^{th} +  t^2e^{-h/t}} +\frac{eh^2}{4+h\left(t+\frac 1 t\right)}, \quad t= 1/z \in  (0,1], \  h >0, 
$$
$$
1 \leq \frac{t^2+1}{e^{M t^2} +  t^2e^{-\sigma}} +\frac{e\sigma^2t^2}{4+\sigma\left(t^2+1\right)}, \quad \  t  \in  (0,1], \  \sigma = \frac h t >0, 
$$ 
$$
1 \leq \frac{we^\sigma}{e^{w\sigma} +  w-1} +\frac{e\sigma^2(w-1)}{4+w\sigma}, \quad w= t^2+1 \in  (1,2], \  \sigma >0, 
$$ 
$$
\frac{e^{w\sigma} +  w-1 -we^\sigma}{e^{w\sigma} +  w-1} \leq \frac{e\sigma^2(w-1)}{4+w\sigma}, \quad w  \in  (1,2], \  \sigma >0, 
$$ 
$$
\left(e^{w\sigma} +  w-1 -we^\sigma\right)(4+w\sigma)  \leq e\sigma^2(w-1)(e^{w\sigma} +  w-1), 
$$ 
$$
A(w,\sigma):= e^{w\sigma}( e\sigma^2(w-1)- (4+w\sigma)) + e\sigma^2(w-1)^2+(4+w\sigma)(1 +w(e^\sigma-1)) \geq 0,$$
to be proved for all $w  \in  (1,2], \  \sigma >0. 
$
It is convenient to represent $A(w,\sigma)$ in the form  of the power series:  
$$A(w,\sigma) = w\sum_{k=2}^{+\infty}\frac{A_k(w)}{k!}\sigma^k= w\sigma^2\left(\frac{A_2(w)}{2!}+\frac{A_3(w)}{3!}\sigma+\frac{A_4(w)}{4!}\sigma^2\right)+ w\sum_{k=5}^{+\infty}\frac{A_k(w)}{k!}\sigma^k=$$
$$\frac{w\sigma^2}{4!}\left(12A_2(w)+4{A_3(w)}\sigma+{A_4(w)}\sigma^2\right)+ w\sum_{k=5}^{+\infty}\frac{A_k(w)}{k!}\sigma^k =:B(w,\sigma)+C(w,\sigma),$$ where $A_2(w)= {2}(e-2)(w-1) >0,\ w \in (1,2]$, 
$$
A_3(w)=  (w-1)(6e-7w-4) >0, \quad w \in (1,1.75], 
$$
$$
A_4(w)=  -8w^{3} +12ew^{2} -12ew+4w+4 = -4(w-1)(2w^2+(2-3e)w+1) >0, \quad w \in (1,2], 
$$
$$
A_k(w) = -w^{k-1}(k+4) +ek(k-1)w^{k-2} -ek(k-1)w^{k-3}+4+kw, \quad k \geq 3. 
$$
It follows  from the above expressions that   $A_k(1) =0$  for all $k \geq 2 $ and since for all $k \geq 5$, it holds that
$$
\quad A_k'(1) = -(k-1)(k+4) +ek(k-1)(k-2) - ek(k-1)(k-3)+k = (k-1)(ek-k-4)+k >0, 
$$
as well as for $w  \in  (1,2]$
$$
A_k''(w) =  -w^{k-3}(k+4)(k-1)(k-2) +ek(k-1)(k-2)(k-3)w^{k-4} -ek(k-1)(k-3)(k-4)w^{k-5}$$
$$=w^{k-5}(k-1)\left\{-w^2(k+4)(k-2) +ek(k-2)(k-3)w -ek(k-3)(k-4)\right\} >0, 
$$
we conclude that $A_k(w) >0$ for all $ w  \in  (1,2]$, $k=2,4,5,6,\dots$ and $A_3(w) >0$ for all $ w  \in  (1,1.75]$. This proves that
$A(w,\sigma)  >0$ for all $w  \in  (1,1.75], \  \sigma >0$, and 
 $C(w,\sigma)  >0$ for all $w  \in  (1,2], \  \sigma >0$. 
Hence, to complete the proof of the positivity of $A(w,\sigma)$ for all $\sigma >0$ and $w \in (1,2]$, it suffices  to establish that $B(w,\sigma) >0$ for all  $w  \in  [1.75,2], \  \sigma >0$.
Now,   $B(w,\sigma)$ or, equivalently,  $12A_2(w)+4{A_3(w)}\sigma+{A_4(w)}\sigma^2$ will be positive for all $\sigma>0$ and for some fixed $w \in (1,2]$ if the discriminant 
$$
D(w)=16(A^2_3(w) - 3A_2(w)A_4(w)) = $$
$$
16(w-1)^2\left((6e-7w-4)^2+{24}(e-2)(2w^2+(2-3e)w+1)\right)$$ 
will be negative for this value of $w$.  Observe that $D(w)$ is the product of two quadratic polynomials so that it is immediate to see that $D(w)<0$ for all $w\in (1,2]$. In a consequence,   $B(w,\sigma) >0$ for all  $w  \in  (1,2]$ and $\sigma >0$ and also $A(w, \sigma) > 0$. 

Finally, the positivity of $A(w,\sigma)$ assures that $r =r(z) < r_1(z) \leq eh^2 =\frak{l} \leq l$.
\qed
\end{proof}

\section*{Acknowledgments}  \noindent  The work of Karel Has\'ik, Jana Kopfov\'a and Petra N\'ab\v{e}lkov\'a was supported  by the institutional support
for the development of research organizations I\v CO 47813059.
This work was realized during a stay of Sergei Trofimchuk at the Silesian University in Opava,  Czech Republic. This stay was possible due to the support of  the Silesian University in Opava and the European Union through the project  CZ.02.2.69/0.0/0.0/16\_027/0008521.  S. Trofimchuk  was  also partially  supported by FONDECYT (Chile),   project 1190712. 
%\newpage 

\vspace{7mm}


\begin{thebibliography}{99}


\bibitem{AGT} M. Aguerrea, C. Gomez, S. Trofimchuk, {On uniqueness of semi-wavefronts (Diekmann-Kaper theory of a nonlinear convolution equation re-visited),}  Math.  Ann.  { 354} (2012) 73-109.

\bibitem{OA}  O. Arino,  A note on ``The discrete Lyapunov function...,'' {J. Differential
Equations} 104 (1993), 169--181.

\bibitem{BY}  M. Bani-Yaghoub, G.-M. Yao,  M. Fujiwara, D.E.  Amundsen, Understanding the interplay between density dependent birth function and maturation time delay using a reaction-diffusion population model, 
{Ecological Complexity } {21}  (2015) 14--26.  

\bibitem{BBI}  L. Berezansky, E. Braverman, L. Idels,
Nicholson's blowflies differential equations revisited: Main results and open problems,
Appl. Math. Modelling 34  (2010) 1405--1417.

\bibitem{BZ} Y. M. Berezansky, Z. G. Sheftel, G. F. Us,  Functional Analysis, Vol. I-Birkhäuser Basel, 1996.

\bibitem{Cao} Y. Cao, The discrete Lyapunov function for scalar differential delay equations, {J. Differential
Equations} 87 (1990), 365--390.

\bibitem{Chern}  I.-L. Chern, M. Mei, X.-F. Yang, Q.-F. Zhang, Stability of non-monotone critical traveling waves for reaction-diffusion equations with time-delay,
J. Differential Equations  259 (2015)  1503--1541. 

\bibitem{ADN} A. Ducrot,  G. Nadin, {Asymptotic behaviour of traveling waves for the delayed Fisher-KPP equation,} J. Differential Equations  256 (2014)  3115--3140. 


\bibitem{Ego}  G.P. Egorychev, Integral representation and the computation of combinatorial sums,  American Mathematical Society, 1984.
\bibitem{IGT}  {A. Ivanov, C. Gomez, S. Trofimchuk.}
On the existence of non-monotone non-oscillating wavefronts, J. Math. Anal. Appl. {419} (2014)  606--616.

\bibitem{FZ} { J.~Fang, X.-Q.~Zhao, }  {Monotone wavefronts of the nonlocal Fisher-KPP 
equation},  Nonlinearity 24 (2011)  3043--3054 . 
 
\bibitem{fhw} {  T. Faria, W. Huang, J. Wu,} Traveling
waves for delayed  reaction-diffusion equations with non-local
response, { Proc. R. Soc. A}  {462} (2006) 229-261.

\bibitem{FT} {T. Faria, S. Trofimchuk,} Non-monotone travelling waves in a single species
reaction-diffusion equation with delay, {J. Differential
Equations} 228 (2006) 357--376.


\bibitem{FTnl}
\newblock {T. Faria, S. Trofimchuk,} 
\newblock {Positive traveling fronts for reaction-diffusion systems with distributed delay}, 
\newblock {Nonlinearity} {23}  (2010) 2457--2481. 


\bibitem{AMC} T. Faria, S. Trofimchuk, Positive heteroclinics and traveling waves for scalar population models with a single delay,  Appl. Math. Comput. 185 (2007) 594--603.


\bibitem{GTLMS} {A. Gomez, S. Trofimchuk,} Global continuation of monotone wavefronts, J. London Math. Soc.  {89}  (2014) 47--68.


\bibitem{G}   S.A. Gourley, Travelling fronts in the diffusive Nicholson's blowflies equation with distributed delays,
Mathematical and Computer Modelling
32 (2000) 843-853.

\bibitem{GR}  S. A. Gourley and S. Ruan, Dynamics of the diffusive Nicholson's blowflies equation with distributed delay, Proc. R. Soc. Edinb. Sect. A  130  (2000) 1275--1291.


\bibitem{GSW} S.A. Gourley, J. W.-H. So and  J. Wu,  Non-locality of reaction-diffusion equations induced by delay: biological modeling and nonlinear dynamics, {J. Math. Sci. } {124} (2004) 5119--5153. 


\bibitem{GZ}  S. Guo, J. Zimmer, Stability of traveling wavefronts in discrete reaction-diffusion equations with nonlocal delay effects, Nonlinearity 28 (2015) 463--492.

\bibitem{GBN}  W. S. C. Gurney, S. P. Blythe, and R. M. Nisbet. Nicholson's blowflies revisited, Nature, 287:17--21, 1980.


\bibitem{HKarxiv} {K. Has\'ik, J. Kopfov\'a, P. N\'ab\v elkov\'a, S. Trofimchuk,}
{On the geometric diversity of wavefronts  for the scalar Kolmogorov ecological equation,}
 e-print arXiv:1903.10339v1

\bibitem{LLW}  M. Lewis, B. Li, H. Weinberger, Spreading speed and linear determinacy for two-species competition models, J. Math. Biol. 45 (2002) 219--233.


\bibitem{KO}  W.T. Li, S.G. Ruan, Z.C. Wang, On the diffusive Nicholson's blowflies equation with nonlocal delays, J. Nonlinear Sci. 17 (2007) 505--525.

\bibitem{LW}  D. Liang, J. Wu,  Travelling waves and numerical approximations in a reaction advection diffusion equation with nonlocal delayed effects, 
J. Nonlinear Sci. 13 (2003) 289--310. 

\bibitem{LM}  C.-K. Lin,  M. Mei, On travelling wavefronts of the Nicholson's blowflies equations with diffusion, Proc.
R. Soc. Edinb. A 140 (2010) 135--152.

\bibitem{LLLM}  C.-K. Lin,  C.-T. Lin, Y. Lin, M. Mei, Exponential stability of nonmonotone traveling waves for Nicholson's blowflies equation,  {SIAM J. Math. Anal.} {{46}} (2014)  1053--1084.

\bibitem{GL} G. Lin,  Travelling waves in the Nicholson's blowflies equation with spatio-temporal delay. Appl. Math. Comput. 209(2), 314--326 (2009)


 \bibitem{LTT}  E. Liz, V. Tkachenko  and S. Trofimchuk,  A global stability criterion for scalar functional differential equations, {SIAM J. Math. Anal.}  {35}  (2003) 596--622.


\bibitem{mp} J. Mallet-Paret, Morse decompositions for differential delay equations, J. Differential Equations 72 (1988), 270--315.

\bibitem{mps}
\newblock {J. Mallet-Paret, G. Sell,} 
\newblock {Systems of delay differential
equations I: Floquet multipliers and discrete Lyapunov functions},
\newblock  { J. Differential Equations} {125} (1996) 385--440.

%
%\bibitem{MR}  H.A. El-Morshedy, A. Ruiz-Herrera,  Global convergence to equilibria in non-monotone delay differential equations,  Proc. Amer. Math. Soc. 147 (2019) 2095--2105.
%
\bibitem{AMR}   A. N. Murovtsev, Global analyticity of solutions of nonlinear functional-differential equations
representable by Dirichlet series, Ukrainian Math. J., 58 (2006) 1448--1457.

\bibitem{Mur} J. D. Murray, Mathematical Biology, Springer, Berlin, 1989.

\bibitem{Nus}  R. D. Nussbaum, Periodic solutions of analytic functional differential equations are analytic,
Michigan Math. J., 20 (1973) 249--255.


\bibitem{Smith}  H.L. Smith,  { Monotone dynamical systems,} Amer. Math. Soc.,  Providence, 1995. 


\bibitem{SWZ}  J. W.-H. So, J. Wu, X. Zou, A reaction-diffusion model for a single species with age structure. I
Travelling wavefronts on unbounded domains, Proc. Roy. Soc. London. Ser. A, 457 (2001) 1841--1853.


\bibitem{SZ}  J. So, X. Zou, Traveling waves for the diffusive Nicholson's blowflies equation, Appl. Math. Comput. 122 (2001) 385--392.


\bibitem{ST}  A. Solar,  S. Trofimchuk, Speed selection and stability of wavefronts for delayed monostable reaction-diffusion equations. J. Dynam. Differential Equations 28 (2016) 1265--1292.

\bibitem{TZ} H.R. Thieme, X.-Q. Zhao
Asymptotic speeds of spread and traveling waves for integral equations and delayed reaction-diffusion models
J. Differential Equations, 195 (2003) 430-470

\bibitem{TPT} {E. Trofimchuk, M. Pinto, S. Trofimchuk}, 
Existence and uniqueness of monotone wavefronts in a nonlocal resource-limited model, Proc. R. Soc. Edinb. Sect. A (2019), doi:10.1017/prm.2019.31. 

\bibitem{TPTy} {E. Trofimchuk, M. Pinto and  S. Trofimchuk. }{ Monotone waves for non-monotone and non-local monostable reaction-diffusion equations,} J. Differential Equations {\bf  261} (2016), 203--1236.

\bibitem{TTT}  { E. Trofimchuk, V. Tkachenko, S. Trofimchuk},  Slowly
oscillating wave solutions of a single species reaction-diffusion
equation with delay,  {J. Differential Equations}, { 245} (2008) 
2307--2332.


\bibitem{TT} E. Trofimchuk, S. Trofimchuk, Admissible wavefront speeds for a single species reaction-diffusion equation with delay, Discrete Contin. Dyn. Syst. A 20 (2008), 407-423.



\bibitem{SLW} S.-L. Wu, S.-Y. Liu,   Uniqueness of non-monotone traveling waves for delayed reaction-diffusion equations, 
 Applied Mathematics Letters,  22  (2009) 1056--1061.


\bibitem{SOY} J.W.-H. So, Y. Yang, Dirichlet problem for the diffusive Nicholson's blowflies equation, {J. Differential Equations}, 
150 (1998) 317--348.  

\bibitem{YCW}  T. Yi, Y. Chen, J. Wu,
Unimodal dynamical systems: Comparison principles, spreading speeds and travelling waves,
J. Differential Equations, 254 (2013) 3538--3572. 

\bibitem{ZY}   C. Zhang, X. Yan, Wavefront solutions in diffusive Nicholson's blowflies equation with nonlocal delay,
 Appl. Math. Mech.  31 (2010) 385--392. 


\end{thebibliography}
\end{document}